\newtheorem{theorem}{Theorem}[section]
\newtheorem{lemma}[theorem]{Lemma}
\newtheorem{proposition}[theorem]{Proposition}
\newtheorem{corollary}[theorem]{Corollary}
\theoremstyle{definition}
\newtheorem{definition}[theorem]{Definition}
\newtheorem{observation}[theorem]{Observation}
\newtheorem{remark}[theorem]{Remark}
\newtheorem{example}[theorem]{Example}
\theoremstyle{remark}
\newcommand{\Si}{\mathfrak{S}}
\newcommand{\Ext}{{\mathrm{Ext}}}
\renewcommand{\hom}{{\mathrm{Hom}}}
\newcommand{\End}{{\mathrm{End}}}
\newcommand{\Id}{{\mathrm{Id}}}
\renewcommand{\k}{\Bbbk}
\newcommand{\op}{\mathrm{op}}
\renewcommand{\P}{\mathcal{P}}
\newcommand{\V}{\mathcal{V}}
\newcommand{\A}{\mathcal{A}}
\newcommand{\B}{\mathcal{B}}
\newcommand{\C}{\mathcal{C}}
\newcommand{\Z}{\mathbb{Z}}
\newcommand{\E}{\mathbb{E}}
\newcommand{\N}{\mathcal{N}}
\newcommand{\K}{\mathcal{K}}
\renewcommand{\L}{\mathcal{L}}
\newcommand{\F}{\mathcal{F}}
\newcommand{\diag}{\mathrm{diag}\,}
\newcommand{\ohat}{\widehat{\otimes}}
\newcommand{\otens}{\overline{\otimes}}
\newcommand{\Tot}{\mathrm{Tot}\,}
\newcommand{\U}{\mathcal{U}}
\newcommand{\Fct}{\mathrm{Fct}}
\renewcommand{\H}{\mathbb{H}}
\newcommand{\wtheta}{\widetilde{\Theta}}
\newcommand{\wlax}{\widetilde{\lax}}
\newcommand{\lax}{\square}
\newcommand{\MC}{\overline{\C}}
\newcommand{\oshuffle}{\overline{\nabla}}
\newcommand{\Ch}{\mathrm{Ch}}
\newcommand{\Inj}{\mathrm{Inj}}
\newcommand{\Proj}{\mathrm{Proj}}
\newcommand{\RR}{\mathbf{R}}
\newcommand{\DD}{\mathbf{D}}
\newcommand{\KK}{\mathbf{K}}
\title{Ringel duality and derivatives of non-additive functors}
\author{Antoine Touz\'e }
\begin{document}

\sloppy

\maketitle
\begin{abstract}
We prove that Ringel duality in the category strict polynomial functors can be interpreted as derived functors of non-additive functors (in the sense of Dold and Puppe). We give applications of this fact for both theories.\\

\textit{AMS classification}: 18G55, 20G05, 20G10.
\end{abstract}
\section{Introduction}

The purpose of this paper is to give an explicit and simple relation between two fundamental theories: Ringel duality for the representations of Schur algebras, and the theory of derived functors of non-additive functors.

Derived functors of non additive functors were introduced by Dold and Puppe in \cite{DP1,DP2}, and later generalized by Quillen \cite{Quillen}. These derived functors arose from the work on simplicial structures (the Dold-Kan correspondence), and came as a conceptual framework explaining many computations from algebraic topology. For example, the `quite bizarre' functors $\Omega(\Pi)$ and $R(\Pi)$ discovered by Eilenberg and Mac Lane in their study \cite{EML2} of the low degrees of the integral homology of Eilenberg-Mac Lane spaces can be interpreted as the derived functors of the second symmetric power $S^2:\mathrm{Ab}\to \mathrm{Ab}$. More generally, the whole homology of Eilenberg-Mac Lane spaces can be described as derived functors of the symmetric algebra functor $S^*$ or of the group ring functor. Derived functors are also related to the homology of symmetric products \cite{Do,DP2}, with group theory and the homotopy groups of Moore spaces (including the homotopy groups of spheres) \cite{Curtis,MP,BM}. 

Representations of Schur algebras arose in a quite different context, from the works of Schur on the general linear group \cite{Schur}. Modules over Schur algebras correspond to polynomial representations of $GL_n$. Representation theory of Schur algebras has known a great development in the eighties and the nineties, with the work of Akin, Buchsbaum and Weyman on characteristic free representation theory see e.g. \cite{ABW,AB1,AB}, the development of the theory highest weight categories, see e.g. \cite{CPS,Ringel}, the work of Donkin, Green and many others (we refer the reader to the books \cite{Green,Martin} for further references). A recent development of representations of Schur algebras is the introduction of the categories $\P_{d,\k}$ of strict polynomial functors by Friedlander and Suslin. Such functors can be thought of as functors $F:\V_\k\to \V_\k$ (where $\V_\k$ is the category of finitely generated projective $\k$-modules) defined by polynomial formulas (which are homogeneous of degree $d$).  Typical examples are the symmetric powers $S^d$, the exterior powers $\Lambda^d$ or the divided powers $\Gamma^d$. Friedlander and Suslin proved \cite[Thm 3.2]{FS} an equivalence of categories $$\P_{d,\k}\simeq S(n,d)\text{-mod}$$
between strict polynomial functors and modules over the Schur algebra $S(n,d)$, provided $n\ge d$. So, problems involving Schur algebras may be approached via strict polynomial functors. This point of view has been very fruitful for cohomological computations, see e.g. \cite{FS,FFSS,Chalupnik2,TouzeEML}, and it is the point of view which we adopt in the article. 

The category of strict polynomial functors is equipped with a duality operation $\Theta$ which produces for each 
functor $F$ a `signed version' $\Theta F$ of $F$. For example, $\Theta S^d=\Lambda^d$. Actually, this duality has better properties at the level of derived categories, where it becomes an equivalence of categories. This equivalence of categories was first studied in the framework of highest weight categories and representations of Schur algebras in \cite{Ringel,DonkinKos} and it is commonly called Ringel duality (although it has been called `Koszul duality'\footnote{A first version of this article was entitled `Koszul duality and derivatives of non-additive functors' because the author was not aware of the name `Ringel duality'. Since $\Theta$ is not Koszul duality in the sense of \cite{Priddy, BGS}, the title has been modified to avoid confusion.} in the framework of strict polynomial functors in \cite{Chalupnik2}).

\subsection*{Main result.} Let us now state our main result. Let $\k$ be a PID, and let $F\in\P_{d,\k}$ be a strict polynomial functor. If $V\in\V_\k$ is a free finitely generated $\k$-module, we build an isomorphism:
$$L_{nd-i}F(V;n)\simeq H^i\left(\Theta^n F(V)\right) \quad (*)$$
The objects on the left hand side  of the isomorphism are the derived functors of $F$ (in the sense of Dold and Puppe) and the objects on the right hand side are the homology groups of $n$-th iterated Ringel dual of $F$. 

Actually, our main theorem \ref{thm-main} is sharper: it asserts a version of isomorphism $(*)$ in the derived category, and it describes the slightly delicate compatibility of the isomorphism with tensor products.

If $n=1$ or $n=2$, the homology groups of $\Theta^n F$ can be interpreted as extension groups, so the isomorphism $(*)$ takes a more concrete form. For example, for $V=\k$, we have isomorphisms, natural in $F$:
$$L_{d-i}F(\k;1)\simeq \Ext^i_{\P_{d,\k}}(\Lambda^d,F)\,,\quad L_{2d-i}F(\k;2)\simeq \Ext^i_{\P_{d,\k}}(S^d,F) $$

The existence of the isomorphism $(*)$ was hinted at in \cite{TouzeEML}, where the author proved (without appealing to Ringel duality) that the extension groups $\Ext^*_{\P_{\k}}(\Lambda^*,\Gamma^*)$ and $\Ext^*_{\P_{\k}}(S^*,\Gamma^*)$ are related to the singular homology with $\k$ coefficients of the Eilenberg-Mac Lane spaces $K(\Z,3)$ and $K(\Z,4)$, which are respectively given by $L_*\Gamma^*(\k;1)$ and $L_*\Gamma^*(\k;2)$. 

Isomorphism $(*)$ yields other unexpected relations between representation theory of Schur algebras and algebraic topology. For example, let $\L^*(\Z^m)$ denote the free Lie algebra generated by $\Z^m$. Each degree of the free Lie algebra yields a strict polynomial functor $\L^d\in\P_{d,\Z}$, and their derived functors appear in the first page of the Curtis spectral sequence \cite{Curtis,MP,BM}, which converges to the homotopy of Moore spaces. For instance, there are spectral sequences converging to the unstable homotopy groups of the spheres $\mathbb{S}^2$ and $\mathbb{S}^3$:
\begin{align*}&E^1_{i,j}=\Ext^{j-i}_{\P_{j,\Z}}(\Lambda^j,\L^j)\Longrightarrow \pi_{j+1}(\mathbb{S}^2)\;,\\&  E^1_{i,j}=\Ext^{2j-i}_{\P_{j,\Z}}(S^j,\L^j)\Longrightarrow \pi_{j+1}(\mathbb{S}^3)\;.\end{align*}

Our result is also interesting from a computational point of view, because people working on each side of isomorphism $(*)$ use different techniques and understand different phenomena. Specialists of Schur algebras have developed combinatorics of partitions, highest weight categories, or use results coming from algebraic groups and algebraic geometry (e.g. Kempf theorem \cite[Part II, Chap 4]{Jantzen}). On the other side, homotopists use simplicial techniques, with an intuition coming from topology. Results which are well-known in the world of representations of Schur algebras translate into results which are unknown to homotopists and vice-versa.
So we hope that our results can serve as a basis for fruitful new interactions between these two subjects.

\subsection*{Organization of the paper.} The paper is more or less self-contained, and we have tried to give an elementary treatment of the subject. 

The first three sections are mainly introductory. Section \ref{sec-str} is an introduction to Schur algebras, strict polynomial functors and their derived categories. Section \ref{section-Kos} is a presentation of Ringel duality, based on Cha{\l}upnik's definition from \cite{Chalupnik2}. Finally, section \ref{section-DP} recalls the definition of derived functors of non-additive functors, and extends the classical definition to the derived category of strict polynomial functors.

In section \ref{section-main}, we prove our main theorem, which gives the link between Ringel duality and derived functors of non additive functors.

In section \ref{sec-applic}, we give some applications of our main theorem. All these applications follow the same principle: to obtain a theorem on one side of the isomorphism $(*)$, we prove a statement on the other side and we translate it using isomorphism $(*)$. In this way we obtain the following new results.
\begin{enumerate}
\item We prove a d\'ecalage formula for derived functors, generalizing the usual d\'ecalage formula due to Quillen \cite{Quillen2} and Bousfield \cite{Bousfield}, and which also generalizes computations of Bott \cite{Bott}.
\item  We find a formula to compute Ringel duals of plethysms (i.e. composites of functors). This formula yields many $\Ext$-computations in $\P_\k$. In particular it provides obstructions to the existence of certain `universal filtrations' (the non-existence of these filtrations was conjectured in \cite{Boffi}).
\item We show how to use block theory of Schur algebras to prove vanishing results for derived functors.
\item We supplement these applications by giving an example of a computation of derived functors using Ringel duality (thus retrieving a result of \cite{BM}).
\end{enumerate}

For the sake of completeness, we have included: a conversion table to help the reader read the literature on Schur functors in section \ref{subsubsec-schur}, a short discussion about the target category of our strict polynomial functors in section \ref{sec-arb}, and an appendix providing an elementary proof that the category of strict polynomial functors is equivalent to the category of modules over the Schur algebra.

\section{Strict polynomial functors and their derived categories}\label{sec-str}

This section is an introduction to the theory of strict polynomial functors originally developed in \cite[Sections 2 and 3]{FS} (and in \cite[Section 2]{SFB} over an arbitrary commutative ring). We first give the basics of the theory over an arbitrary commutative base ring $\k$. When $\k$ is a Dedekind ring (e.g. a PID), the category has an internal $\hom$ and we recall its properties. Finally, we give a brief account of the derived category of strict polynomial functors over a PID. 

\subsection{Recollections of Strict Polynomial Functors}\label{subsec-SPF}
Strict polynomial functors were originally defined over a field $\k$ in \cite{FS} but as remarked in \cite{SFB}, the generalization over a commutative ring $\k$ is straightforward. In this more general setting, we let $\V_\k$ be the category of finitely generated projective $\k$-modules and $\k$-linear maps. (This notation and the letters $V$, $W$, etc. denoting the objects of $\V_\k$ come from the field case where $\V_\k$ is the category of finite dimensional vector spaces).

\subsubsection{Schur algebras, categories $\Gamma^d\V_\k$, and their representations}\label{subsubsec-SGR}

Let $\k$ be a commutative ring and let $n$ and $d$ be positive integers. The tensor product ${\k^n}^{\,\otimes d}$ is acted on by the symmetric group $\Si_d$ which permutes the factors of the tensor product. The Schur algebra $S(n,d)$ is the algebra $\End_{\Si_d}({\k^n}^{\,\otimes d})$ of $\Si_d$-equivariant $\k$-linear homomorphisms \cite[2.6c]{Green}.

The categories $\Gamma^d\V_\k$ generalize Schur algebras. The objects of $\Gamma^d\V_\k$ are the finitely generated projective $\k$-modules. The homomorphism modules $\hom_{\Gamma^d\V_\k}(V,W)$ are the $\Si_d$-equivariant maps from $V^{\otimes d}$ to $W^{\otimes d}$ and the composition is just the composition of $\Si_d$-equivariant maps. In particular, $S(n,d)$ is nothing but the algebra of endomorphisms of the object $\k^n\in\Gamma^d\V_\k$. By abuse, we identify $S(n,d)$ with the full subcategory of $\Gamma^d\V_\k$ with $\k^n$ as unique object. 

The category of $\k$-linear representations of $\Gamma^d\V_\k$ in $\V_\k$ (i.e. the category of functors $F:\Gamma^d\V_\k\to\V_\k$ whose action on morphisms $f\mapsto F(f)$ is $\k$-linear) is denoted by $\P_{d,\k}$ and is commonly called the category of degree $d$ homogeneous strict polynomial functors\footnote{This is not the original definition of Friedlander and Suslin, but it is not hard to show it is equivalent, see e.g. \cite[p. 41]{FPan}.}. Restriction of $\Gamma^d\V_\k$-representations to the full subcategory with unique object $\k^n$ yields a functor 
$$\P_{d,\k}\to S(n,d)\text{-mod}\;.$$
(Here $S(n,d)\text{-mod}$ stands for the category of modules over the Schur algebra, which are finitely generated and projective as $\k$-modules.) Friedlander and Suslin proved \cite{FS,SFB} that it is an equivalence of categories if $n\ge d$ (we give a direct proof of this in appendix \ref{app}). 

\begin{example}
The strict polynomial functor $\otimes^d$ sends an object $V$ of $\Gamma^d\V_\k$ to $V^{\otimes d}$ and sends a morphism $f\in \hom_{\Gamma^d\V_\k}(V,W)$ to the same $f$, but viewed as an element of  $\hom_\k(V^{\otimes d},W^{\otimes d})$.
\end{example}

Sums, products, kernels, cokernels, etc. in the category $\P_{d,\k}$ are computed objectwise in the target category $\V_\k$, so that the structure of $\P_{d,\k}$ inherits many properties from $\V_\k$. In particular, if $\k$ is a field, then $\P_{d,\k}$ is an abelian category, and more generally over an arbitrary commutative ring $\k$, $\P_{d,\k}$ is an exact category in the sense of Quillen \cite{Buehler,Keller}, the admissible short exact sequences $F\xrightarrow[]{\iota} G\xrightarrow[]{p} H $ (i.e. the conflations $(\iota,p)$ according to the terminology of \cite{Keller}) being the ones which become short exact sequences after evaluation on any $V\in\V_\k$.

We denote by $\P_{0,\k}$ the category of constant functors from $\V_\k$ to $\V_\k$. The category $\P_{\k}$ of strict polynomial functors of finite degree is defined by
$$\P_\k=\textstyle\bigoplus_{d\ge 0}\P_{d,\k}\;,$$
where the right hand side term denotes the subcategory of $\Pi_{d\ge 0} \P_{d,\k}$ whose objects are the finite products (only a finite number of terms are non zero). If $F=\oplus_{i\le d} F_i$, with $F_i\in\P_{i,\k}$ and $F_d\ne 0$, then $F$ is said to have strict polynomial degree $d$. 
 
\subsubsection{Strict polynomial functors vs ordinary functors}

There is an exact forgetful functor from the category $\P_{\k}$ to the category $\F_\k$ of ordinary functors from $\V_\k$ to $\k$-modules:
$$\U:\P_\k\to \F_\k=\Fct(\V_\k,\k\text{-mod})\;,$$
defined in the following way. If $F\in\P_{0,\k}$ then $F$ is already a (constant) functor. If $d\ge 1$, the functor $\U$ sends an element $F\in\P_{d,\k}$ to the precomposition $F\circ \gamma^d$, where $\gamma^d:\V_\k\to \Gamma^d\V_\k$ is the functor which is the identity on objects and sends a map $f$ to $f^{\otimes d}$.

\begin{example}
$\U(\otimes^d)$ is the functor $V\mapsto V^{\otimes d}$ in the usual sense. 
\end{example}

\begin{remark}
One can prove that all the functors in the image of $\U$ are polynomial in the sense of Eilenberg and Mac-Lane \cite{EML2}, and the Eilenberg Mac-Lane degree of $\U F$ is less or equal to the strict polynomial degree of $F$. The inequality may very well be strict, e.g. if $\k=\mathbb{F}_p$, the Frobenius twist functor $I^{(1)}$ \cite[(v) p. 224]{FS} has strict polynomial degree $p$, but $\U I^{(1)}$ is the identity functor, whose Eilenberg Mac Lane degree equals one. 
\end{remark}

\subsubsection{Further structure of $\P_\k$ over a commutative ring $\k$}

We now briefly summarize further structure borne by the category of strict polynomial functors over a commutative ring $\k$.
\begin{description}
\item[Tensor products.] One can take tensor products of strict polynomial functors in the target category $(F\otimes G)(V)
:=F(V)\otimes G(V)$. This yields bi-exact functors:
$$\otimes:\P_{d,\k}\times\P_{e,\k}\to \P_{d+e,\k}\quad\text{ and }\quad\otimes:\P_\k\times\P_\k\to \P_\k\;. $$
\item[Composition.] One can also compose strict polynomial functors: $(F\circ G)(V)=F(G(V))$. In this way we obtain exact functors:
$$\circ:\P_{d,\k}\times\P_{e,\k}\to \P_{de,\k}\quad\text{ and }\quad\circ:\P_\k\times\P_\k\to \P_\k\;. $$
\item[Duality.] We let $V^\vee$ be the $\k$-module $\hom_\k(V,\k)$. The formula $F^\sharp(V):=F(V^\vee)^\vee$ defines equivalences of categories (which are self inverse):
$$^\sharp: (\P_{d,\k})^\op\xrightarrow[]{\simeq}\P_{d,\k}\quad\text{ and }\quad \,^\sharp:(\P_\k)^\op\xrightarrow[]{\simeq}\P_\k\;.$$
\item[Projectives.]
We denote by $\Gamma^{d,V}$ the strict polynomial functor defined by (the first equality is the definition, and the last two equalities are the canonical identifications):
$$\Gamma^{d,V}(W)=\hom_{\Gamma^d\V_\k}(V,W)=(\hom_\k(V,W)^{\otimes d})^{\Si_d}=\Gamma^{d}(\hom_\k(V,W))\;.$$
The Yoneda lemma yields an isomorphism (natural in $V$ and $F$):
$$\hom_{\P_{d,\k}}(\Gamma^{d,V},F)\simeq F(V)\;,$$ 
so the $\Gamma^{d,V}$ are actually projectives of $\P_{d,\k}$.
In fact, the functors $\Gamma^{d,V}$ for $V\in\V_\k$, form a projective \emph{generator} of $\P_{d,\k}$. To be more specific, the canonical map $\Gamma^{d,V}(W)\otimes G(V)\to G(W)$ is an epi if $V$ contains $\k^d$ as a direct summand, see \cite[Thm 2.10]{FS} or appendix \ref{app}.

We simply denote by $\Gamma^d$ the functor $\Gamma^{d,\k}$ (if $d=0$, it is the constant functor with value $\k$). For all $n$-tuples $\mu=(\mu_1,\dots,\mu_n)$, we denote by $\Gamma^\mu$ the tensor product $\bigotimes_{i=1}^n \Gamma^{\mu_i}$. The functor $\Gamma^{d,\k^n}$ decomposes as the direct sum $\bigoplus_\mu \Gamma^\mu$, the sum being taken over all $n$-tuples $\mu$ of nonnegative integers of weight $\sum\mu_i=d$. This has two consequences. First, the $\Gamma^\mu$ form a projective generator. Second, the tensor product of projectives is once again projective.

\item[Injectives.] By duality, the functors $S^d_V = (\Gamma^{d,V})^\sharp:W\mapsto S^d(W\otimes V)$ form an injective cogenerator of $\P_{d,\k}$. We denote by $S^d$ the functor $S^{d}_\k$ and for all $n$-tuples $\mu$ of nonnegative integers we denote by $S^\mu$ the tensor product $\otimes_{i=1}^n S^{\mu_i}$. All these functors are injectives and the family $(S^\mu)_\mu$ indexed by tuples of weight $d$ forms an injective cogenerator of $\P_{d,\k}$.

\item[Extensions.] Since the $\P_{d,\k}$ (hence $\P_\k$) are exact categories with enough projectives and injectives, there is no problem in defining extension groups \cite{Buehler,Keller}. Since $\P_\k=\textstyle\bigoplus_{d\ge 0}\P_{d,\k}$, if $F$ and $G$ are homogeneous strict polynomial functors then $\Ext^*_{\P_\k}(F,G)$ equals $\Ext^*_{\P_{d,\k}}(F,G)$ if $F$ and $G$ have the same degree, and zero otherwise.
\end{description}

\begin{remark}\label{rk-PT}
The category $\P_{d,\k}$ is a full exact subcategory of the abelian category $\widetilde{\P}_{d,\k}$ of functors from $\Gamma^d\V_\k$ to arbitrary $\k$-modules. 
A projective resolution $\P_{d,\k}$ yields a projective resolution in $\widetilde{\P}_{d,\k}$, so the inclusion functor $\P_{d,\k}\hookrightarrow \widetilde{\P}_{d,\k}$ induces an isomorphism (see also section \ref{sec-arb}):
$$\Ext^*_{\P_{\k,d}}(F,G)\simeq \Ext^*_{\widetilde{\P}_{\k,d}}(F,G)\;. $$
In this article, we prefer to work in $\P_{d,\k}$ because one loses duality and exactness of tensor products for functors with arbitrary values.
\end{remark}

\subsection{The internal $\hom$ over Dedekind rings}

In this section, we restrict our attention to the category $\P_{d,\k}$ of homogeneous strict polynomial functors of degree $d$. We assume that $\k$ is a Dedekind ring (for example a PID). It ensures that $\hom$-groups in $\P_{d,\k}$ are finitely generated and projective $\k$-modules. Thus, we can introduce a parameter to obtain internal $\hom$s. This idea is already used e.g. in \cite{Chalupnik2,TouzeEML}. We recall here their definition and main properties. We begin with a few notations about parameterized functors. 

\subsubsection{Parameterized functors}\label{subsubsec-param}
If $F$ is a strict polynomial functor and $V\in \V_\k$, we form a strict polynomial functor $F^V$ with parameter $V$, by letting $F^V(W)=F(\hom_\k(V,W))$. It is straightforward to check that this actually yields a functor:
$$(\Gamma^d\V_\k)^{\op}\times\Gamma^d\V_\k\to \V_\k,\quad (V,W)\mapsto F^V(W).$$
The parameter `$V$' is written as an exponent to indicate the contravariance in $V$.
The notation for parameterized functors is coherent with the notation for projectives: $(\Gamma^d)^V=\Gamma^{d,V}$, and more generally, $(\Gamma^{d,V_1})^{V_2}=\Gamma^{d,V_1\otimes V_2}$. 

Similarly, one can introduce a covariant parameter $V$ (hence written as an index), by letting $F_V(W)=F(V\otimes W)$. Observe that $(F_V)^{\sharp}\simeq (F^\sharp)^V$, and that this notation agrees with the notation for injectives: $(S^d_{V_1})_{V_2}\simeq S^d_{V_1\otimes V_2}$.
We shall use heavily the following fact in the sequel.
\begin{observation}
We can replace the parameter $V$ by a simplicial free $\k$-module $X$. In this case $F_X$ becomes a  simplicial object in $\P_{d,\k}$, with $(F_X)_n=F_{X_n}$ with face operators  $F(d_i):F_{X_n}\to F_{X_{n-1}}$ and degeneracy operators $F(s_i):F_{X_n}\to F_{X_{n+1}}$. Similarly $F^X$ becomes a cosimplicial object in $\P_{d,\k}$. This generalizes to multisimplicial objects: if $X$ is a bisimplicial free $\k$-module then $F_X$ is a bisimplicial strict polynomial functor, and so on.
\end{observation}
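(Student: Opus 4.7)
The plan is essentially tautological: I verify that the parameterization $V \mapsto F_V$ defines a $\k$-linear functor from $\V_\k$ to $\P_{d,\k}$, and then invoke the fact that a simplicial object in a category $\C$ is by definition a functor $\Delta^{\op} \to \C$, so that post-composition of a simplicial $X: \Delta^{\op} \to \V_\k$ with $F_{(-)}$ automatically yields a simplicial object in $\P_{d,\k}$.

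For the functoriality step, given a morphism $\varphi:V\to V'$ in $\V_\k$, the maps $F(\varphi \otimes \mathrm{id}_W): F(V\otimes W) \to F(V'\otimes W)$ are $\Si_d$-equivariantly natural in $W\in\Gamma^d\V_\k$, because tensoring by $\varphi$ on the first factor commutes with permutations of the tensor factors of $W^{\otimes d}$, and $F$ is $\k$-linear as a functor on $\Gamma^d\V_\k$. Hence these maps assemble into a morphism $F_\varphi:F_V\to F_{V'}$ in $\P_{d,\k}$, and preservation of composition and identities is inherited from the corresponding properties of $F$ and of the tensor product. The dual assignment $V\mapsto F^V$ defines a contravariant functor $\V_\k^{\op}\to \P_{d,\k}$ by the same argument applied to $\hom_\k(-,W)$, which is contravariant in its first slot.

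For the simplicial conclusion, given $X:\Delta^{\op}\to \V_\k$ with face and degeneracy operators $d_i, s_i$, the composite $F_{(-)}\circ X$ is a functor $\Delta^{\op}\to\P_{d,\k}$, i.e.\ a simplicial object whose $n$-simplices are $F_{X_n}$ and whose face and degeneracy operators are exactly $F_{d_i}$ and $F_{s_i}$; the simplicial identities are transported from $X$ via the functor $F_{(-)}$. The cosimplicial case for $F^X$ is identical using $F^{(-)}$, and the multisimplicial generalization follows by iteration, since a functor $\Delta^{\op}\times\Delta^{\op}\to\V_\k$ composed with $F_{(-)}$ yields a functor $\Delta^{\op}\times\Delta^{\op}\to\P_{d,\k}$, and similarly in higher arity. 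The only minor caveat worth noting is that if the levels of $X$ are free but not finitely generated, one first extends $F$ to all free $\k$-modules via filtered colimits of its values on finitely generated direct summands; this causes no difficulty. No step of the argument presents a real obstacle — the observation is essentially a packaging statement about functoriality.
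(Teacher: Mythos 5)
The paper provides no proof for this observation — it is presented as a self-evident packaging statement — and your argument supplies exactly the tautological verification the author had in mind: exhibit $V\mapsto F_V$ as a $\k$-linear functor $\V_\k\to\P_{d,\k}$ (with $F_\varphi$ obtained by applying $F$ to the $\Si_d$-equivariant map $(\varphi\otimes\Id_W)^{\otimes d}$, objectwise in $W$), and then post-compose $X:\Delta^{\op}\to\V_\k$ with this functor. One small imprecision: you write $F(\varphi\otimes\Id_W)$, but since $F$ is a functor on $\Gamma^d\V_\k$ its morphisms are $\Si_d$-equivariant maps on $d$-th tensor powers, so this should be read as $F\bigl((\varphi\otimes\Id_W)^{\otimes d}\bigr)$; your subsequent phrasing shows you understand this. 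Your caveat about levels of $X$ that are free but not finitely generated is a fair technical remark — strictly speaking $F_V$ only lands in $\P_{d,\k}$ for $V\in\V_\k$ — but it is not needed for any use the paper makes of the observation, since the simplicial modules $K(n)=\K(\k[-n])$ have finitely generated free levels.
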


\subsubsection{The internal $\hom$ in $\P_{d,\k}$}\label{subsubsec-internal}
We recall the construction and properties of the internal $\hom$ in $\P_{\k,d}$. For the proofs (which are elementary) we refer the reader to \cite[section 4]{TouzeEML}.
\begin{definition} Let $\k$ be a Dedekind ring. Then for all $F,G\in\P_{d,\k}$, the $\k$-module $\hom_{\P_\k}(F,G)$ is finitely generated and projective. In particular, the formula 
$V\mapsto \hom_{\P_\k}(F^V,G)$ defines an element of $\P_{d,\k}$, which we denote by $\H(F,G)$, and we have a bifunctor:
$$\begin{array}{cccc} 
\H:&\P_{d,\k}^\op\times\P_{d,\k}&\to&\P_{d,\k}\\
& (F,G)&\mapsto & \H(F,G) 
\end{array}\;.
$$
\end{definition}

The bifunctor $\H$ enjoys the following properties. First, the Yoneda lemma and duality respectively yield isomorphisms (natural in $F,G,U$):
$$\H(\Gamma^{d,U},G)\simeq G_U\;,\qquad \H(F,G)\simeq \H(G^\sharp,F^\sharp)\;.$$
The bifunctor $\H$ is also compatible with tensor products. For $i=1,2$, let $F_i,G_i$ be homogeneous strict polynomial functors of degree $d_i$. Tensor products induce a morphism of strict polynomial functors (natural in $F_i, G_i$): 
$$\H(F_1,G_1)\otimes\H(F_2,G_2)\xrightarrow[]{\otimes} \H(F_1\otimes F_2,G_1\otimes G_2)\;.$$
Finally, if $X^*$ denotes $S^*,\Lambda^*$ or $\Gamma^*$, we may postcompose this tensor product by the map induced by the comultiplication $X^{d_1+d_2}\to X^{d_1}\otimes X^{d_2}$ to get an isomorphism \cite[Lemma 5.5]{TouzeEML}:
\begin{align*}\H(X^{d_1},G_1)\otimes\H(X^{d_2},G_2)\xrightarrow[]{\simeq} \H(X^{d_1+d_2},G_1\otimes G_2)\;.\end{align*}
\begin{remark}
The bifunctor $\H$ is an internal $\hom$ in the usual sense. In particular, it is adjoint to the symmetric monoidal product $\bullet$ in $\P_{\k,d}$ defined by $F\bullet G=\H(F,G^\sharp)^\sharp$. However, the facts recalled above will be sufficient for our purposes, and we refer the reader to \cite{Krause} for a nice exposition of this symmetric monoidal product. We have used the notation `$\H$' instead of the more standard notation $\underline{\hom}_{\P_{d,\k}}$ for typographical reasons (to keep formulas compact). 
\end{remark}

\subsection{Derived categories}

Now we assume that $\k$ is a PID. This ensures \cite{DonkinHDim,AB} that the category of modules over the Schur algebra, hence the category $\P_{d,\k}$, has finite homological dimension. So we can work without trouble in the bounded derived category. The assumption on $\k$ also allows us to use internal $\hom$s.

\subsubsection{Chain complexes, etc.}\label{subsec-conventions}

We recall the conventions and notations for complexes which we use in the article. In what follows, $\A$ is an additive category, enriched over a commutative ring $\k$ (e.g. the category of $\k$-modules, $\P_{d,\k}$, modules over a $\k$-algebra, etc.)

\begin{description}
\item[Complexes.] We let $\Ch(\A)$ be the category of complexes in $\A$. Gradings are indifferently denoted using the homological convention or the cohomological one. As usual, the conversion between the two conventions is realized by the formula $C^i=C_{-i}$. We denote by $\Ch^b(\A)$, $\Ch^+(\A)$, $\Ch^-(\A)$ the full subcategories of bounded, bounded below, and  bounded above cochain complexes. We also denote by $\Ch_{\ge 0}$ the full subcategory of $\Ch^-(\A)$ consisting of nonnegatively graded chain complexes.
We let $\KK(\A)$, $\KK^b(\A)$, $\KK^+(\A)$, $\KK^-(\A)$, $\KK_{\ge 0}$ the corresponding homotopy categories.

\item[Suspension.] If $C$ is a complex, its $i$-th suspension $C[i]$ is defined by $C[i]^n=C^{n-i}$ (or $C[i]_n=C_{i+n}$) and $d_{C[i]}= (-1)^{i}d_C$. The suspension of a morphism of complexes $f:C\to D$ is given by $f[d]_i=f_{d+i}$

\item[Tensor products.] If $\A$ is equipped with a symmetric, associative and unital tensor product (in other words: $\A$ is a strict symmetric monoidal category), then so are the categories $\Ch^b(\A)$, $\Ch^+(\A)$, $\Ch^-(\A)$ and $\Ch_{\ge 0}(\A)$. Differentials in tensor products of complexes are defined as usual by the Koszul convention:
$$ d(x\otimes y)=dx\otimes y + (-1)^{\deg x}x\otimes dy\;,$$
and we denote by $\tau$ the symmetry isomorphism $C\otimes D\simeq D\otimes C$, which maps $x\otimes y$ to $(-1)^{\deg x\deg y}y\otimes x$.

\item[Bicomplexes.] A bicomplex $B$ is a bigraded object in $\A$, equipped with two differentials $d_1:B^{i,j}\to B^{i+1,j}$ and $d_2:B^{i,j}\to B^{i,j+1}$ which commute. The total complex $\Tot B$ associated to $B$ is defined as usual by using the Koszul sign convention. Thus $(\Tot B)^k=\bigoplus_{i+j=k}B^{i,j}$ and for $x\in B^{i,j}$, $dx=d_1x+(-1)^id_2x$.
\end{description}
\subsubsection{Quasi-isomorphisms and derived categories}\label{subsec-derived}
Now we assume that $\k$ is a PID. A chain map $f:C\to D$ is a quasi-isomorphism if it satisfies one of the conditions of the following lemma.
\begin{lemma}\label{lm-caracqis}
Let $\k$ be a PID. 
Let $C,D\in\Ch(\P_{d,\k})$ and let $f:C\to D$ be a chain map. The following conditions are equivalent. 
\begin{itemize}
\item[(i)] For all $V\in\V_\k$, the morphism of complexes of $\k$-modules $f_V:C(V)\to D(V)$ induces an isomorphism in homology. 
\item[(ii)] For all $V\in\V_\k$, the mapping cone $M(V)$ of $f_V$ is a complex of $\k$-modules with trivial homology.
\item[(iii)] The  mapping cone $M$ of $f$ is a complex of strict polynomial functor, which decomposes as the Yoneda splice of admissible short exact sequences $Z^n\hookrightarrow M^n \twoheadrightarrow Z^{n+1}$.
\end{itemize}
\end{lemma}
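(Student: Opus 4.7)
The plan is to establish the cycle (i) $\Leftrightarrow$ (ii), (iii) $\Rightarrow$ (ii), and (ii) $\Rightarrow$ (iii). The equivalence (i) $\Leftrightarrow$ (ii) is purely formal: since mapping cones of chain maps in $\P_{d,\k}$ are constructed via direct sums and are therefore computed pointwise, for each $V\in\V_\k$ the complex $M(V)$ is the mapping cone of $f_V$ in $\Ch(\k\text{-mod})$. The standard long exact sequence in homology associated to the mapping-cone triangle then shows that $H_*(f_V)$ is an isomorphism if and only if $H_*(M(V))=0$; asserting this at every $V$ yields the equivalence.

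For (iii) $\Rightarrow$ (ii), recall that an admissible short exact sequence in $\P_{d,\k}$ is by definition one that evaluates to a SES in $\k\text{-mod}$ at every $V\in\V_\k$. If $M$ is the Yoneda splice of admissible SES $Z^n\hookrightarrow M^n \twoheadrightarrow Z^{n+1}$, then $M(V)$ is the Yoneda splice of honest SES of $\k$-modules, hence an acyclic complex.

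The main step is (ii) $\Rightarrow$ (iii). Define the cycle subfunctor $Z^n$ of $M^n$ pointwise by $Z^n(V):=\ker\bigl(d^n_V:M^n(V)\to M^{n+1}(V)\bigr)$; this is a priori only a subfunctor $\Gamma^d\V_\k\to \k\text{-mod}$, and the key observation is that it actually lands in $\P_{d,\k}$. Indeed, $M^n(V)$ is a finitely generated projective, hence free, $\k$-module (using that $\k$ is a PID), so the submodule $Z^n(V)$ is finitely generated (as $\k$ is Noetherian) and free (submodules of free modules over a PID are free). Thus $Z^n(V)\in\V_\k$, and because kernels in $\P_{d,\k}$ are computed objectwise in $\V_\k$, $Z^n\in\P_{d,\k}$. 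The identity $d^{n+1}d^n=0$ factors $d^n$ as $M^n\twoheadrightarrow Z^{n+1}\hookrightarrow M^{n+1}$, and pointwise acyclicity supplied by (ii) gives $Z^{n+1}(V)=\mathrm{im}(d^n_V)$, which makes the left map surjective. Evaluating this factorization at any $V$ produces a SES of $\k$-modules, so $Z^n\hookrightarrow M^n\twoheadrightarrow Z^{n+1}$ is admissible and the Yoneda splice of these sequences reconstructs $M$.

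The only delicate point is verifying that the cycle subfunctors $Z^n$ genuinely belong to $\P_{d,\k}$; this is precisely why the PID hypothesis is needed, since it simultaneously guarantees finite generation (via Noetherianity) and projectivity (via the PID property) of the kernel of a morphism between objects of $\V_\k$, both of which are required for membership in $\V_\k$.
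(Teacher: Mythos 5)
Your proof is correct and follows essentially the same route as the paper: (i) $\Leftrightarrow$ (ii) via the cone's long exact sequence, (iii) $\Rightarrow$ (ii) by objectwise evaluation of admissible exact sequences, and (ii) $\Rightarrow$ (iii) by forming the cycle subfunctors $Z^n$ and using the PID hypothesis to land them in $\V_\k$. The only addition is that you spell out explicitly why $Z^n(V)$ is a finitely generated free $\k$-module (submodule of a finitely generated free module over a PID), a point the paper asserts without elaboration.
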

\begin{proof}
The equivalence between (i) and (ii) is standard \cite[Cor 1.5.4]{Weibel}. It is trivial that (iii) implies (ii). The converse uses that $\k$ is a PID. Since $\k$ is a PID, the cycles of a complex are functors from $\Gamma^d\V_\k$ to $\V_\k$, i.e. they are genuine strict polynomial functors. The  
$Z^n\hookrightarrow M^n \twoheadrightarrow Z^{n+1}$ are admissible short exact sequences since they are exact after evaluation on $V\in\V_\k$. 
\end{proof}

\begin{remark}
Condition (i) mimics the definition quasi-isomorphism in abelian categories, and it is handy since it allows traditional spectral sequence argument to check that a map is a quasi-isomorphism. Condition (iii) is the standard definition (see \cite[Def. 10.16]{Buehler} or \cite[Section 11]{Keller}) of quasi-isomorphisms in exact categories (like $\P_{d,\k}$). 
\end{remark}

Let $*$ denote the symbol $+$, $-$ or $b$ or the empty symbol. Since $\P_{d,\k}$, resp. $\P_\k$ are exact categories, there are associated derived categories 
$\DD^*(\P_{d,\k})$, resp. $\DD^*(\P_{\k})$ \cite[Section 11]{Keller}, or \cite[Section 10.4]{Buehler}. The derived categories are localizations of $\KK^*(\P_{d,\k})$, resp. $\KK^*(\P_{\k})$, with respect to quasi-isomorphisms, exactly as in the case of an abelian category \cite[Chap 10]{Weibel}.

So, the objects of the derived categories
$\DD^*(\P_{d,\k})$, resp. $\DD^*(\P_{\k})$ are the same as the ones of $\Ch^*(\P_{d,\k})$, resp. $\Ch^*(\P_{\k})$. Morphisms in the derived categories, with source $C$ and target $D$ are represented by diagrams $C\rightarrow D' \leftarrow D$, where the first map is a chain map and the second map is a quasi-isomorphism. Two diagrams $C\to D'\leftarrow D$ and $C\to D''\leftarrow D$ represent the same morphism if and only if they fit into a commutative diagram (where the vertical arrows are quasi-isomorphisms):
$$\xymatrix@R=0.3cm{
& D'\ar[d] &\\
C\ar[ru]\ar[r]\ar[rd] &D'''& D\ar[lu]\ar[l]\ar[ld]\\
& D''\ar[u] &
}.$$

As in the case of abelian categories, the derived categories $\DD^*(\P_{d,\k})$ and $\DD^*(\P_{\k})$ are triangulated categories. The exact triangles are (rotates of) the ones isomorphic to the standard triangle $C\to D\to M\to C[-1] $ where $M$ denotes the mapping cone of the morphism $C\to D$.

Let the symbol $\ast$ stand for $+$, $-$ or $b$. By bi-exactness, tensor products of complexes induce tensor products at the level of the derived categories
\begin{align*}&\DD^*(\P_{d,\k})\times \DD^*(\P_{e,\k})\xrightarrow[]{\otimes} \DD^*({\P_{d+e,\k}})\,,\\
&\DD^*(\P_{\k})\times \DD^*(\P_{\k})\xrightarrow[]{\otimes} \DD^*({\P_{\k}})\;.
\end{align*}
These tensor products are symmetric, associative and unital. In particular, $\DD^*(\P_{\k})$ is a strict symmetric monoidal category.

Actually, we shall work mainly in bounded  derived categories. We list below some extra properties of these categories. 
\begin{description}
\item[Decomposition.] The derived category $\DD^b(\P_{\k})$ decomposes as the direct sum of its full subtriangulated categories $\DD^b(\P_{d,\k})$:
$$\DD^b(\P_\k)=\bigoplus_{d\ge 0}\DD^b(\P_{d,\k})\;.$$
\item[Duality.] By exactness, duality induce functors:
\begin{align*}&\DD^b(\P_{d,\k})\xrightarrow[]{^\sharp}\DD^b(\P_{d,\k}^\op)\simeq (\DD^b(\P_{d,\k}))^\op\;,\\
 &\DD^b(\P_{\k})\xrightarrow[]{^\sharp}\DD^b(\P_{\k}^\op)\simeq (\DD^b(\P_{\k}))^\op\;.
\end{align*}
\item[Injectives and projectives.] Since $\k$ is a PID, $\P_{d,\k}$ has finite homological dimension \cite{DonkinHDim,AB}. So each complex in $\DD^b(\P_{d,\k})$ or $\DD^b(\P_{\k})$ is quasi-isomorphic to a finite complex of injectives and to a bounded complex of projectives. Thus $\DD^b(\P_{d,\k})$ and $\DD^b(\P_{\k})$ are equivalent to their full subcategory of complexes of projective functors, and also to their full subcategory of injective objects. In particular we have equivalences of categories:
$$\DD^b(\P_\k)\simeq \KK^b(\Inj(\P_{\k}))\;,\quad \DD^b(\P_\k)\simeq \KK^b(\Proj(\P_{\k}))\;.$$
Since tensor products of injectives (resp. projectives) remain injective (resp. projective), the equivalences above may be realized by monoidal functors.
\item[Internal Hom.] Let $C\in\P_{d,\k}$. The internal $\hom$ functor induces a derived functor:
$$\RR\H(C,-): \DD^b(\P_{d,\k})\to \DD^b(\P_{d,\k})\;.$$
Tensor products induce morphisms natural with respect to the complexes $C,C',D,D'$, associative, commutative and unital:
$$\RR\H(C,D)\otimes \RR\H(C',D')\to \RR\H(C\otimes C',D\otimes D')\;. $$
If $D$ is a complex of $\H(C,-)$-acyclic objects, there is an isomorphism $\RR\H(C,D)\simeq \H(C,D)$. 

\end{description}

\section{Ringel duality}\label{section-Kos}
In this short section, we present Ringel duality. We adopt the point of view of Cha{\l}upnik \cite{Chalupnik2}, which we generalize over a PID $\k$. For an explanation of the combinatorial ideas encoded in Ringel duality, we refer the reader to \cite[Section 2]{Chalupnik2}.
\subsection{Definition of Ringel duality}

\begin{definition}[{\cite{Chalupnik2}}] Let $\k$ be a PID. The Ringel duality functor $\Theta$ is the triangle functor: 
$$\begin{array}{cccc}
\Theta: &\DD^b(\P_{d,\k})&\to &\DD^b(\P_{d,\k})\\
& C &\mapsto & \RR\H(\Lambda^d,C)
\end{array}.$$
\end{definition}

\begin{remark}
Functors of degree zero are constant functors, and $\Lambda^0$ is the constant functor with value $\k$. In particular, $\H(\Lambda^0,-)$ is the identity functor of $\P_{0,\k}$. So $\Theta$ is the identity map if $d=0$.
\end{remark}

Now we describe the compatibility of $\Theta$ with tensor products. For $C\in\DD^b(\P_{d,\k})$ and $D\in\DD^b(\P_{e,\k})$, the following composite, where the last map is induced by the comultiplication $\Lambda^{d+e}\to\Lambda^d\otimes\Lambda^e$, is an isomorphism:
$$\RR\H(\Lambda^d,C)\otimes\RR\H(\Lambda^e,D)\xrightarrow[]{\otimes}\RR\H(\Lambda^d\otimes\Lambda^e,C\otimes D)\to \RR\H(\Lambda^{d+e},C\otimes D)\;,$$
Indeed, it is true for complexes of injectives since in this case it reduces to the isomorphism $\H(\Lambda^d,C)\otimes\H(\Lambda^e,D)\xrightarrow[]{\simeq}\H(\Lambda^{d+e},C\otimes D)$ from section \ref{subsubsec-internal}. We denote this composite by $\lax$.
$$\lax:(\Theta C)\otimes (\Theta D)\xrightarrow[]{\simeq} \Theta(C\otimes D)\;.$$
The morphism $\lax$ is associative and unital since $\H(\Lambda^d,-)$ is. So, gathering all possible degrees $d$, we obtain the following result.
\begin{proposition}\label{prop-square}
Ringel duality yields a monoidal functor:
$$(\Theta,\square,\Id)\;:\; (\DD^b(\P_{\k}),\otimes,\k)\to (\DD^b(\P_{\k}),\otimes,\k) $$ 
\end{proposition}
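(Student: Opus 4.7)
The proposition packages together several ingredients already established in the excerpt: the natural transformation $\lax$ has been constructed and shown to be an isomorphism in each pair of degrees, so the task is to verify the coherence axioms of a (lax) monoidal functor. The plan is to reduce every axiom to a corresponding property of the pairing $\H(F_1,G_1)\otimes\H(F_2,G_2)\xrightarrow[]{\otimes}\H(F_1\otimes F_2,G_1\otimes G_2)$ on internal Homs, combined with structural properties of the comultiplication on the exterior algebra $\Lambda^*$.

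First I would fix the convenient model for computing $\Theta$. Using the equivalence $\DD^b(\P_\k)\simeq\KK^b(\Inj(\P_\k))$ recalled in section \ref{subsec-derived}, which is realized by a monoidal functor because the tensor product of injectives is injective, I may replace any complex $C\in\DD^b(\P_{d,\k})$ by a bounded complex of injectives. Then $\Theta C=\H(\Lambda^d,C)$ strictly, and the composite defining $\lax$ lives at the chain level. In particular the naturality of $\lax$ in both variables follows from the bifunctoriality of the pairing on $\H$.

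Next I would check the associativity hexagon. Given $C,D,E$ in degrees $d,e,f$, one must show that the two composites $\Theta C\otimes\Theta D\otimes\Theta E\to\Theta(C\otimes D\otimes E)$ coincide. After unfolding $\lax$, each composite breaks into two pieces: (i) an iterated application of the tensor pairing $\H(-,-)\otimes\H(-,-)\to\H(-\otimes-,-\otimes-)$, and (ii) a precomposition with a chain of comultiplications on $\Lambda^*$. Piece (i) is associative because the tensor pairing is induced by $\otimes$ in $\P_\k$ and hence inherits associativity from the tensor product. Piece (ii) reduces to the coassociativity of the comultiplication $\Lambda^{d+e+f}\to\Lambda^d\otimes\Lambda^e\otimes\Lambda^f$, a standard property of the exterior Hopf algebra. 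Functoriality of $\H(-,G)$ in its first variable then lets these two pieces commute past each other, giving the desired equality.

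For the unit axiom I would use that $\Lambda^0=\k$, so $\Theta$ restricted to $\P_{0,\k}$ is the identity and the unit morphism of the monoidal functor is $\mathrm{id}_\k$. The triangle identity for $\lax$ (compatibility with left/right unit constraints) then reduces to the counitality of the comultiplication $\Lambda^d\to\Lambda^d\otimes\Lambda^0=\Lambda^d\otimes\k$ together with the fact that the tensor pairing on $\H$ sends $\mathrm{id}_\k\otimes\phi$ to $\phi$. Both statements are immediate.

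The main potential obstacle is bookkeeping with signs and with the $\RR$/chain-level distinction: one must make sure that passing to the derived category does not create coherence anomalies. This is why I would insist on working with complexes of injectives throughout, where $\RR\H=\H$ and all the identifications are strict equalities; then each axiom is just an identity in $\Ch^b(\P_\k)$ verified directly from the classical Hopf-algebra identities on $\Lambda^*$ and the bifunctoriality of $\H$.
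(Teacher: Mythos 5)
Your proof is correct and follows essentially the same approach as the paper, which compresses the coherence check into the one-line remark that ``$\lax$ is associative and unital since $\H(\Lambda^d,-)$ is.'' You have simply unpacked that remark: reduce to bounded complexes of injectives where $\RR\H=\H$, then derive associativity and unitality of $\lax$ from the associativity of the tensor pairing on internal $\hom$s together with the coassociativity and counitality of the comultiplication on $\Lambda^*$.
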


\begin{remark}\label{rk-sgn-Ringel} Ringel duality is not a \emph{symmetric} monoidal functor. Indeed, the comultiplication $\Lambda^{d+e}\to \Lambda^d\otimes\Lambda^e$ composed with the permutation $\Lambda^d\otimes\Lambda^e\simeq \Lambda^e\otimes\Lambda^d$ equals $(-1)^{de}$ times the comultiplication. So the following diagram commutes up to a $(-1)^{de}$ sign.
$$\xymatrix{
\Theta(C)\otimes\Theta(D)\ar[r]_-\simeq^-{\lax}\ar[d]_-\simeq^-{\tau}& \Theta(C\otimes D)\ar[d]_-\simeq^-{\Theta(\tau)}\\
\Theta(D)\otimes\Theta(C)\ar[r]_-\simeq^-{\lax}& \Theta(D\otimes C)
}.$$
\end{remark}

\subsection{How to compute Ringel duals}
Let $C\in\DD^b(\P_{d,\k})$. Then $\Theta(C)$ is quasi-isomorphic to the complex $\H(\Lambda^d,D)$, where $D$ is a complex of $\H(\Lambda^d,-)$-acyclic objects quasi-isomorphic to $C$. The following lemma shows that many concrete functors are $\H(\Lambda^d,-)$-acyclic, so there is often a huge choice of complexes $D$ available for explicit computations.
\begin{lemma}\label{lm-acyclic}
The class of $\H(\Lambda^d,-)$-acyclic objects is stable under tensor products. It contains symmetric powers, exterior powers and more generally Schur functors. 
\end{lemma}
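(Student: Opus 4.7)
The statement splits into two independent claims that I would prove separately.

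For the tensor product claim, the key input is the isomorphism
$\H(\Lambda^{a},-)\otimes\H(\Lambda^{b},-)\xrightarrow{\simeq}\H(\Lambda^{a+b},-\otimes-)$ recalled at the end of Section \ref{subsubsec-internal}. Given $F\in\P_{a,\k}$ and $G\in\P_{b,\k}$ that are $\H(\Lambda^{a},-)$- and $\H(\Lambda^{b},-)$-acyclic respectively, I would choose injective resolutions $F\to I^\bullet$ and $G\to J^\bullet$. Since tensor products of injectives are injective (noted in the excerpt), $\Tot(I^\bullet\otimes J^\bullet)$ is an injective resolution of $F\otimes G$, and applying the isomorphism above term by term yields
$$\H(\Lambda^{a+b},\Tot(I^\bullet\otimes J^\bullet))\;\simeq\; \Tot\bigl(\H(\Lambda^{a},I^\bullet)\otimes\H(\Lambda^{b},J^\bullet)\bigr).$$
By hypothesis the two tensor factors have cohomology concentrated in degree $0$ (equal to $\H(\Lambda^{a},F)$ and $\H(\Lambda^{b},G)$). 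Because $\k$ is a PID and the values of internal $\hom$'s in $\P_{d,\k}$ lie in $\V_\k$ (hence are $\k$-flat), a Künneth argument forces the total complex to have cohomology concentrated in degree zero as well. Thus $\RR\H(\Lambda^{a+b},F\otimes G)\simeq\H(\Lambda^{a+b},F\otimes G)$, so $F\otimes G$ is acyclic.

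For the specific functors, I would translate acyclicity into an $\Ext$-vanishing statement via the identity $\H(\Lambda^d,F)(V)\simeq \hom_{\P_{d,\k}}(\Lambda^{d,V},F)$: a functor $F\in\P_{d,\k}$ is $\H(\Lambda^d,-)$-acyclic if and only if $\Ext^{>0}_{\P_{d,\k}}(\Lambda^{d,V},F)=0$ for every $V\in\V_\k$. Taking $V=\k^n$ and using the decomposition $\Lambda^{d,\k^n}\simeq \bigoplus_\mu \Lambda^\mu$ indexed by compositions $\mu$ of $d$ of length $n$, the problem reduces to showing $\Ext^{>0}_{\P_{d,\k}}(\Lambda^\mu,F)=0$ when $F$ is a symmetric power, an exterior power, or a Schur functor. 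This is a standard consequence of the integral highest weight structure on $\P_{d,\k}$: the tensor products $\Lambda^\mu$ are (partial) tilting objects, so $\Ext^{>0}(\Lambda^\mu,-)$ vanishes against any functor admitting a costandard filtration. The symmetric powers $S^d$ are even injective (so trivially acyclic); $\Lambda^d$ is itself tilting and hence admits a costandard filtration; Schur functors by definition realise the costandard objects of the highest weight structure attached to arbitrary partitions (cf.\ Section \ref{subsubsec-schur}).

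The main technical obstacle is that the highest weight / tilting machinery must be invoked over the PID $\k$ rather than over a field. This is covered by the integral theory of Schur algebras due to Akin--Buchsbaum--Weyman and Donkin (existence of integral costandard resolutions of Schur functors, tilting property of exterior-power tensor products), so the remaining work amounts to citing the appropriate results rather than reproving them.
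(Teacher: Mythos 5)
Your proof is correct, and for the tensor product claim you are essentially unpacking the same argument as the paper (which simply observes that stability follows from $\lax$ being an isomorphism in the derived category: if $\Theta F$ and $\Theta G$ are concentrated in degree $0$, so is $\Theta F\otimes\Theta G\simeq\Theta(F\otimes G)$). Your Künneth/flatness elaboration is a harmless extra layer but is exactly what justifies the terse statement.

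For the specific functors, however, you take a genuinely different route from the paper's main argument. The paper proceeds as follows: symmetric powers are injective (same as you); exterior powers are acyclic by reference to an earlier paper (\cite[Remark 7.5]{TouzeEML}); and then Schur functors are handled by citing Akin--Buchsbaum \cite{AB} for the existence of finite resolutions of $S_\lambda$ by tensor products of exterior powers, after which acyclicity follows from the already-established stability under tensor products together with the long exact sequence. This keeps the argument elementary and avoids invoking any highest weight machinery over the PID $\k$. Your route instead reformulates acyclicity as vanishing of $\Ext^{>0}_{\P_{d,\k}}(\Lambda^\mu,-)$ and then appeals to the tilting/costandard formalism (Donkin's tilting property of $\Lambda^\mu$, Schur functors as costandard objects). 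This is precisely the \emph{alternative} proof mentioned in the paper's footnote, which cites \cite[Thm 3.11]{CPS} and Kempf vanishing \cite[II, Prop 4.13]{Jantzen}; the paper notes that this route requires identifying Schur functors with $H^0(\lambda)$ via the dictionary in section \ref{subsubsec-schur}, and you correctly flag the remaining work of invoking the integral (PID) form of the highest weight theory. So your argument is valid but rests on heavier external input than the paper's chosen line; the paper's approach trades the conceptual packaging of tilting theory for the explicit Akin--Buchsbaum resolution, which is more self-contained over an arbitrary PID.
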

\begin{proof}
Stability of the class of $\H(\Lambda^d,-)$-acyclic objects by tensor product follows from the fact that $\lax$ is an isomorphism. Symmetric powers are $\H(\Lambda^d,-)$-acyclic because they are injective. For exterior powers, this well known fact is proved e.g. in \cite[Remark 7.5]{TouzeEML}. Finally, it is proved in \cite{AB} that Schur functors admit finite resolutions by tensor products of exterior powers. Hence they are $\H(\Lambda^d,-)$-acyclic\footnote{Alternative proofs of the $\H(\Lambda^d,-)$-acyclicity of Schur functors can be provided by the highest weight category structure of $\P_{d,\k}$ \cite[Thm 3.11]{CPS}, or by Kempf vanishing \cite[II, Prop 4.13]{Jantzen} combined with \cite[Cor 3.13]{FS}. For these alternative proofs, one must identify Schur functors in a slightly different context, see section \ref{subsubsec-schur}.}. 
\end{proof}
If $X^*$ denotes the symmetric exterior or divided power algebra and if $\lambda=(\lambda_1,\dots,\lambda_n)$ is a tuple of nonnegative integers of weight $\sum\lambda_i=d$, we denote by $X^\lambda$ the tensor product $\bigotimes_{i=1}^n X^{\lambda_i}$.
For the reader's convenience, we describe the (well-known) action of the functor $\H(\Lambda^d,-)$ on the $\H(\Lambda^d,-)$-acyclic objects $S^\lambda$ and $\Lambda^\lambda$ in the following lemma.
\begin{lemma}\label{lm-comput}
Let $d$ be a positive integer. The following computations hold in $\P_{d,\k}$.
\begin{enumerate}
\item[(i)] $\H(\Lambda^d,\otimes^d)\simeq \otimes^d$. 
If we denote by $\sigma:\otimes^d\to \otimes^d$ the morphism induced by a permutation $\sigma\in\Si_d$, then $\H(\Lambda^d,\sigma)= \epsilon(\sigma)\sigma$.
\item[(ii)] If $\lambda$ is a partition of weight $d$, $\H(\Lambda^d,S^\lambda)\simeq \Lambda^\lambda$. Moreover, the multiplication $\otimes^d\twoheadrightarrow S^\lambda$ is sent by $\H(\Lambda^d,-)$ to the multiplication $\otimes^d\twoheadrightarrow \Lambda^\lambda$.
\item[(iii)] If $\lambda$ is a partition of weight $d$, $\H(\Lambda^d,\Lambda^\lambda)\simeq \Gamma^\lambda$. Moreover, the comultiplication $\Lambda^\lambda\hookrightarrow\otimes^d$ is sent by $\H(\Lambda^d,-)$ to the comultiplication $\ \Gamma^\lambda\hookrightarrow\otimes^d$.
\end{enumerate}
\end{lemma}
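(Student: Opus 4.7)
The plan is to reduce all three assertions to elementary base cases via iterated application of the lax product $\lax$ from section~\ref{subsubsec-internal}, combined with Yoneda and duality. Since $\otimes^d\simeq(S^1)^{\otimes d}$, $S^\lambda$ and $\Lambda^\lambda$ are all $\H(\Lambda^d,-)$-acyclic by Lemma~\ref{lm-acyclic}, so the $\lax$-morphisms relating $\H(\Lambda^d,-)$ of a tensor product to a tensor product of $\H(\Lambda^{d_i},-)$'s are isomorphisms on the objects that appear.

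For (i), decompose $\Lambda^d=\Lambda^1\otimes\cdots\otimes\Lambda^1$ and $\otimes^d=(\otimes^1)^{\otimes d}$; iterated $\lax$ yields $\H(\Lambda^d,\otimes^d)\simeq\H(\Lambda^1,\otimes^1)^{\otimes d}$, with base case $\H(\Lambda^1,\otimes^1)\simeq\otimes^1$ obtained from Yoneda applied to $\Lambda^1=\Gamma^{1,\k}$. For the sign action, Remark~\ref{rk-sgn-Ringel} says that swapping two $\lax$-factors of degrees $d_1,d_2$ introduces the sign $(-1)^{d_1d_2}$; each adjacent transposition of $\otimes^d$ therefore contributes $-1$, so any $\sigma\in\Si_d$ acts by $\epsilon(\sigma)\sigma$.

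For (ii), iterated $\lax$ reduces to the case $\lambda=(d)$; combine duality with Yoneda to get $\H(\Lambda^d,S^d)\simeq\H((S^d)^\sharp,(\Lambda^d)^\sharp)=\H(\Gamma^d,\Lambda^d)\simeq\Lambda^d$. The multiplication $\otimes^d\twoheadrightarrow S^d$ is dual to the comultiplication $\Gamma^d\hookrightarrow\otimes^d$, which yields an induced arrow $\otimes^d\to\Lambda^d$ once one identifies $\H(\otimes^d,\Lambda^d)\simeq\otimes^d$ via duality and (i). A Yoneda/weight-space computation (using that $\otimes^d$ is a direct summand of $\Gamma^{d,\k^d}$) shows $\hom_{\P_{d,\k}}(\otimes^d,\Lambda^d)\simeq\k$ is free of rank one with the multiplication as a distinguished generator, so it suffices to verify non-vanishing of the induced arrow and fix the normalization on a single test input.

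For (iii), iterated $\lax$ reduces again to $\lambda=(d)$. Apply the left-exact functor $\H(\Lambda^d,-)$ to the comultiplication $\Lambda^d\hookrightarrow\otimes^d$ to obtain an injection $\H(\Lambda^d,\Lambda^d)\hookrightarrow\H(\Lambda^d,\otimes^d)\simeq\otimes^d$. Under the identification of (i), a decomposable tensor $v_1\otimes\cdots\otimes v_d\in V^{\otimes d}$ corresponds to the morphism
$$f_1\wedge\cdots\wedge f_d\;\mapsto\;\sum_{\sigma\in\Si_d}\epsilon(\sigma)\,f_{\sigma(1)}(v_1)\otimes\cdots\otimes f_{\sigma(d)}(v_d),$$
and a direct computation (straightforward for $d=2$, then by induction) shows that its values lie in $\Lambda^d(W)\subset W^{\otimes d}$ precisely when $v$ lies in the image of the comultiplication $\Gamma^d\hookrightarrow\otimes^d$; this identifies $\H(\Lambda^d,\Lambda^d)\hookrightarrow\otimes^d$ with the comultiplication $\Gamma^d\hookrightarrow\otimes^d$, as claimed. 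The hardest step is this bookkeeping of maps in (ii) and (iii); the isomorphisms themselves are formal consequences of the structural properties of $\H$.
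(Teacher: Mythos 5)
Your plan for (i) and (ii) essentially parallels the paper's proof: both reduce via the iterated $\lax$-isomorphism to base cases handled by Yoneda, and (ii) uses the same duality trick $\H(\Lambda^d,S^d)\simeq\H(\Gamma^d,\Lambda^d)\simeq\Lambda^d$. You do treat the sign in (i) more carefully than the paper does, correctly tracing it through Remark~\ref{rk-sgn-Ringel}. For the identification of the map in (ii) you substitute a rank argument ($\hom_{\P_{d,\k}}(\otimes^d,\Lambda^d)\simeq\k$ with the multiplication as generator) for the paper's explicit computation via the map $\Sigma:\k^d\to\k$. That approach can work, but \emph{fixing the normalization on a single test input} is not automatic over a PID: you must produce an evaluation on which the multiplication hits a \emph{primitive} element, so that the scalar of proportionality is forced to be a unit (in fact $1$), not merely nonzero. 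You gesture at this but don't execute it; the paper's explicit identification avoids this point.

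Your argument for (iii) is genuinely different from the paper's. The paper presents $\Lambda^d$ via the exact sequence $\Lambda^d\hookrightarrow\otimes^d\to\bigoplus_i(\otimes^{i-1})\otimes S^2\otimes(\otimes^{d-1-i})$, applies the already-established identifications from (i) and (ii) together with left exactness of $\H(\Lambda^d,-)$, and reads off $\ker=\Gamma^d$. You instead only use $\Lambda^d\hookrightarrow\otimes^d$ and compute by hand which elements $v\in V^{\otimes d}\simeq\H(\Lambda^d,\otimes^d)(V)$ give morphisms factoring through $\Lambda^d$, claiming this holds iff $v\in\Gamma^d(V)$. The idea is sound (your explicit formula for the morphism attached to $v$ is correct), but the central claim is asserted, not proved; the ``straightforward for $d=2$, then by induction'' step is where all the work lives. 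One subtlety worth flagging: over a general PID the image of the comultiplication $\Lambda^d(W)\hookrightarrow W^{\otimes d}$ is \emph{not} the full submodule of alternating tensors (e.g.\ in characteristic $2$ the alternating tensors are the $\Si_d$-invariants, which strictly contain the image of $\Lambda^d$), so ``lands in $\Lambda^d$'' cannot be checked by merely verifying sign-equivariance under permutations. Any rigorous version of your computation must check membership in the actual image, which makes the induction less ``direct'' than the sketch suggests. The paper's exact-sequence route cleanly sidesteps this by reducing to the already-known statement about $\H(\Lambda^2,\otimes^2\to S^2)$.
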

\begin{proof}
Let us prove (i). For $d=1$ it reduces to the Yoneda isomorphism $\H(\Lambda^1,S^1)\simeq \Lambda^1$. For $d\ge 2$, we use the fact that $\lax$ is an isomorphism. 

(ii) is a little trickier. Using the isomorphism $\lax$, one reduces the proof of (ii) to the case of $S^\lambda=S^d$. By duality $\H(\Lambda^d,S^d)$ is isomorphic to $\H(\Gamma^d,\Lambda^d)$ which is isomorphic to $\Lambda^d$ by the Yoneda isomorphism. 

It remains to compute the image of the multiplication $\otimes^d\twoheadrightarrow S^d$ by $\H(\Lambda^d,-)$. By duality, this amounts to computing the map $\H(\otimes^d,\Lambda^d)\to \H(\Gamma^d,\Lambda^d)$ induced by the comultiplication $\Gamma^d\hookrightarrow\otimes^d$. First, $\Gamma^{d,\k^d}\simeq \bigoplus \Gamma^\lambda$, the sum being taken over all $d$-tuples $\lambda=(\lambda_1,\dots,\lambda_d)$ of nonnegative integers of weight $d$. Thus $\otimes^d$ identifies with a direct summand of $\Gamma^{d,\k^d}$, and one checks that the composite $\Gamma^d\hookrightarrow \otimes^d\hookrightarrow \Gamma^{d,\k^d}$ equals the map $\Gamma^d=\Gamma^{d,\k}\hookrightarrow \Gamma^{d,\k^d}$ induced by the map $\Sigma:\k^d\to \k$, $(x_1,\dots,x_d)\mapsto \sum x_i$. The map $\H(\Sigma,\Lambda^d)$ identifies through the Yoneda isomorphism with $\Lambda^d_{\Sigma}$. Now $\Lambda^*_\Sigma:\Lambda^*_{\k^d}\simeq \Lambda^{*\otimes d}\to \Lambda^*$ is the $d$-fold multiplication, so restricting our attention to the summand $\otimes^d$ of $\Lambda^d_{\k^d}$ we get the result.

Finally, to prove (iii), we first use the isomorphism $\lax$ to reduce the proof to the case of $\Lambda^\lambda=\Lambda^d$. Then  $\Lambda^d$ fits into an exact sequence $$\Lambda^d\hookrightarrow\textstyle\otimes^d\to \bigoplus_{i=1}^{d-1}(\otimes^{i-1})\otimes S^2\otimes (\otimes^{d-1-i})\;,$$
where the first map is the comultiplication and the components of the second map are obtained by tensoring the multiplication $\otimes^2\to S^2$ by identities. By (i), (ii) and by left exactness of $\H(\Lambda^d,-)$, $\H(\Lambda^d,\Lambda^d)$ is the kernel of the same map but with $S^2$ replaced by $\Lambda^2$. Hence it equals $\Gamma^d$, and the map $\H(\Lambda^d,\Lambda^d)\hookrightarrow \H(\Lambda^d,\otimes^d)$ identifies with the comultiplication $\Gamma^d\to \otimes^d$.
\end{proof}

As a consequence of the elementary computations of lemma \ref{lm-comput} we have:
\begin{lemma}\label{lm-iso}
The functor $\H(\Lambda^d,-)$ induces isomorphisms:
$$\hom_{\P_{d,\k}}(S^\mu,S^\lambda)\simeq \hom_{\P_{d,\k}}(\Lambda^\mu,\Lambda^\lambda)\simeq \hom_{\P_{d,\k}}(\Gamma^\mu,\Gamma^\lambda)\;. $$
These isomorphisms fit into a diagram:
$$\xymatrix{
\hom_{\P_{d,\k}}(S^\mu,S^\lambda)\ar[d]^{\sharp}\ar[r]^-{\simeq}&\hom_{\P_{d,\k}}(\Lambda^\mu,\Lambda^\lambda)\ar[d]^{\sharp}\\
\hom_{\P_{d,\k}}(\Gamma^\lambda,\Gamma^\mu)&\hom_{\P_{d,\k}}(\Lambda^\lambda,\Lambda^\mu)\ar[l]_{\simeq}
}.$$
\end{lemma}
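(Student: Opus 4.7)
My plan is to exploit two structural facts: first, $\otimes^d = \Gamma^{1^d}$ is simultaneously projective and injective in $\P_{d,\k}$ (hence also self-dual); second, by Lemma \ref{lm-comput}(i), the functor $\H(\Lambda^d,-)$ acts on $\End(\otimes^d) = \k[\Si_d]$ as the sign-twist involution $\sigma \mapsto \epsilon(\sigma)\sigma$. The strategy is to use $\otimes^d$ as an intermediary: I will lift every morphism between the $S^\mu$, $\Lambda^\mu$, or $\Gamma^\mu$ to an endomorphism of $\otimes^d$ via the (co-)multiplications appearing in Lemma \ref{lm-comput}(ii)--(iii), and then transport these lifts bijectively via the sign-twist.

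For the first isomorphism, given $f\colon S^\mu\to S^\lambda$, I will lift the composite $\otimes^d\twoheadrightarrow S^\mu\xrightarrow{f} S^\lambda$ along the multiplication $\otimes^d\twoheadrightarrow S^\lambda$ (using projectivity of $\otimes^d$) to obtain $\tilde f\in\End(\otimes^d)$. Applying $\H(\Lambda^d,-)$ and invoking Lemma \ref{lm-comput}(ii), the multiplications $\otimes^d\twoheadrightarrow S^?$ become the multiplications $\otimes^d\twoheadrightarrow \Lambda^?$, so that $\H(\Lambda^d,\tilde f)$ descends to $\H(\Lambda^d,f)\colon\Lambda^\mu\to\Lambda^\lambda$. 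This is a bijection because the descent condition on $\tilde f$ (ordinary $\Si_\mu$-coinvariance in the source) transports under the sign-twist involution to the sign-twisted $\Si_\mu$-coinvariance condition on $\H(\Lambda^d,\tilde f)$, and vice versa; applying the involution again recovers $f$ from $\H(\Lambda^d, f)$. The second isomorphism is handled in a dual manner, using the injectivity of $\otimes^d$ and the comultiplications $\Lambda^\mu\hookrightarrow\otimes^d$ and $\Gamma^\mu\hookrightarrow\otimes^d$ from Lemma \ref{lm-comput}(iii): a morphism $g\colon\Lambda^\mu\to\Lambda^\lambda$ is extended to $\tilde g\in\End(\otimes^d)$ along $\Lambda^\mu\hookrightarrow\otimes^d$, the sign-twist is applied, and the extension condition transports symmetrically to characterize a morphism $\Gamma^\mu\to\Gamma^\lambda$. (Alternatively, one can derive this isomorphism from the first by dualizing, using $(\Gamma^\mu)^\sharp = S^\mu$ and the self-duality $(\Lambda^\mu)^\sharp \simeq \Lambda^\mu$.)

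The commutativity of the square is verified by the same bookkeeping of lifts: the duality $\sharp$ acts on $\End(\otimes^d) = \k[\Si_d]$ as the anti-involution $\sigma\mapsto\sigma^{-1}$, and the identity $\epsilon(\sigma) = \epsilon(\sigma^{-1})$ ensures that this anti-involution commutes with the sign-twist; so both routes around the diagram correspond, on lifts, to the same element of $\End(\otimes^d)$, which descends to the same morphism $\Gamma^\lambda\to\Gamma^\mu$. The main obstacle will be matching descent and extension conditions across the multiplications and comultiplications with the required sign precision, but everything reduces cleanly to the fact that on $\k[\Si_d]$ the sign-twist is an involution commuting with $\sigma\mapsto\sigma^{-1}$.
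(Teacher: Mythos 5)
Your proposal is correct, but it takes a genuinely different route from the paper's. The paper does \emph{not} try to prove the two horizontal isomorphisms directly. Instead it first establishes the commutativity of the square in the statement (by lifting to $\otimes^d$, applying the special case $\H(\Lambda^d,\sigma)=\epsilon(\sigma)\sigma$, and using duality), and then deduces the isomorphisms from the fact that the composite around the square equals the isomorphism $\sharp$. Concretely: commutativity gives $\H_{\mathrm{bot}}\circ\sharp\circ\H_{\mathrm{top}}=\sharp$, so $\H_{\mathrm{top}}$ is split injective and $\H_{\mathrm{bot}}$ is split surjective; combined with the equalities of ranks forced by the vertical isomorphisms (and the fact that a surjection between free $\k$-modules of equal finite rank over a PID is an isomorphism), both $\H$-arrows are isomorphisms. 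Your approach reverses this: you prove bijectivity of each $\H$-arrow directly, by observing that on $\End(\otimes^d)=\k\Si_d$ the map $\H(\Lambda^d,-)$ is the sign-twist involution $\theta\colon\sigma\mapsto\epsilon(\sigma)\sigma$, and that $\theta$ carries the ideal $\ker(M_\lambda\circ-)$ and the $\Si_\mu$-invariance condition characterizing $\hom(S^\mu,S^\lambda)$ bijectively onto the sign-twisted analogues characterizing $\hom(\Lambda^\mu,\Lambda^\lambda)$; involutivity $\theta^2=\Id$ then gives the inverse. The square is proved afterward by tracking lifts and extensions and using $\theta\iota=\iota\theta$. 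Both proofs work; the paper's is shorter and defers the bookkeeping, while yours is more explicit.

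Two points of precision you should tighten when writing this up. First, the phrase ``applying the involution again recovers $f$ from $\H(\Lambda^d,f)$'' is literally false: applying $\H(\Lambda^d,-)$ twice lands in $\hom(\Gamma^\mu,\Gamma^\lambda)$, not $\hom(S^\mu,S^\lambda)$. What you mean is that the sign-twist $\theta$ on $\End(\otimes^d)$ is an involution, that it preserves the pair (submodule of descendable elements, kernel ideal) up to exchanging the $S$- and $\Lambda$-versions, and that this forces the induced map on subquotients to be a bijection; you must explicitly check that $\theta$ maps $\ker(M_\lambda\circ-)$ to $\ker(M'_\lambda\circ-)$, which follows from the functoriality of $\H(\Lambda^d,-)$ together with Lemma \ref{lm-comput}(ii). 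Second, note that $\hom(\Lambda^\mu,\Lambda^\lambda)$ is parametrized by \emph{lifts} (via multiplications $\otimes^d\twoheadrightarrow\Lambda^?$) in the first half of your argument, but by \emph{extensions} (via comultiplications $\Lambda^?\hookrightarrow\otimes^d$) in the second half; these are distinct parametrizations, and showing that duality $\sharp$ exchanges them consistently---a lift $\tilde f$ of $f$ dualizes to an extension $\iota(\tilde f)$ of $f^\sharp$---is exactly the computation needed to make your argument for the commutativity of the square rigorous.
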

\begin{proof}
We first treat the particular case $S^\lambda=S^\mu=\otimes^d$. In that case, the morphisms $\sigma:\otimes^d\to\otimes^d$ induced by the permutations $\sigma\in\Si_d$ (acting on $\otimes^d$ by permuting the factors of $\otimes^d$) form a basis of $\hom_{\P_{d,\k}}(\otimes^d,\otimes^d)$. So lemma \ref{lm-comput}(i) completely describes $\H(\Lambda^d,-)$ on tensor products, and the result follows.
 
Now we prove the general case. First, we check the commutativity of the diagram of lemma \ref{lm-iso}. Let $f\in\hom_{\P_{d,\k}}(S^\lambda,S^\mu)$. By projectivity of $\otimes^d$, the morphism $f$ fits into a commutative square (where the vertical arrows are induced by the multiplication)
$$\xymatrix{
\otimes^d\ar@{->>}[d]\ar[r]^-{\overline{f}}&\otimes^d\ar@{->>}[d]\\
S^\lambda\ar[r]^-{f} &S^\mu
}.$$
If we apply to this square on the one hand duality, and on the other hand first $\H(\Lambda^d,-)$, then duality and once again  $\H(\Lambda^d,-)$, we obtain commutative squares (where the vertical arrows are the comultipications)
$$\xymatrix{
\otimes^d&\otimes^d\ar[l]_-{\overline{f}^\sharp}\\
\Gamma^\lambda\ar@{^{(}->}[u] &\Gamma^\mu\ar@{^{(}->}[u]\ar[l]_-{f^\sharp}
},
\quad\;
\xymatrix{
\otimes^d&&&\otimes^d\ar[lll]_-{\H(\Lambda^d,\H(\Lambda^d,\overline{f})^\sharp)}\\
\Gamma^\lambda\ar@{^{(}->}[u] &&&\Gamma^\mu\ar@{^{(}->}[u]\ar[lll]_-{\H(\Lambda^d,\H(\Lambda^d,f)^\sharp)}
}.
$$
By the particular case $S^\lambda=S^\mu=\otimes^d$, we get that $\H(\Lambda^d,\H(\Lambda^d,\overline{f})^\sharp)$ equals $\overline{f}^\sharp$. Thus, $\H(\Lambda^d,\H(\Lambda^d,f)^\sharp)$ equals $f^\sharp$. So the diagram commutes. 

To conclude the proof, we observe that the vertical arrows of the diagram are isomorphisms. So the horizontal maps must be isomorphisms. 
\end{proof}

Now, using lemma \ref{lm-iso}, it is easy to prove the following result.
\begin{theorem}{\cite{Chalupnik2}}
The functor $\Theta$ is an equivalence of categories, with inverse the functor $C\mapsto (\Theta(C^\sharp))^\sharp$.
\end{theorem}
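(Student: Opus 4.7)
The plan is to take $\Theta'(C):=(\Theta(C^\sharp))^\sharp$ as the candidate quasi-inverse and to check that $\Theta'\Theta$ and $\Theta\Theta'$ are naturally isomorphic to the identity on a convenient generating family, then extend by triangulated dévissage.

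First, compute $\Theta$ on acyclic objects. By Lemma \ref{lm-acyclic} each $\Lambda^\mu$ is $\H(\Lambda^d,-)$-acyclic, so $\Theta(\Lambda^\mu)\simeq\Gamma^\mu$ in $\DD^b(\P_{d,\k})$ by Lemma \ref{lm-comput}(iii); dually $S^\mu$ is injective (hence acyclic) and Lemma \ref{lm-comput}(ii) gives $\Theta(S^\mu)\simeq\Lambda^\mu$. Using self-duality $(\Lambda^\mu)^\sharp=\Lambda^\mu$ and the identity $(\Gamma^\mu)^\sharp=S^\mu$, one obtains
\begin{equation*}
\Theta'\Theta(\Lambda^\mu) \simeq \Theta'(\Gamma^\mu) = (\Theta(S^\mu))^\sharp \simeq (\Lambda^\mu)^\sharp = \Lambda^\mu,
\end{equation*}
and symmetrically $\Theta\Theta'(\Gamma^\mu)\simeq\Gamma^\mu$, so both compositions act as the identity on the corresponding generating families.

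Lemma \ref{lm-iso} upgrades these pointwise identifications to an equivalence of additive categories between the additive hull of $\{\Lambda^\mu\}_\mu$ and $\Proj(\P_{d,\k})$, the latter being the additive hull of $\{\Gamma^\mu\}_\mu$. Passing to bounded homotopy categories produces a chain of triangle equivalences
\begin{equation*}
\KK^b(\mathrm{add}\{\Lambda^\mu\}) \xrightarrow{\;\Theta\;} \KK^b(\Proj(\P_{d,\k})) \simeq \DD^b(\P_{d,\k}),
\end{equation*}
the last equivalence being the ``Injectives and projectives'' fact recalled in Section \ref{subsec-derived}.

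The main obstacle is to show that the natural functor $\KK^b(\mathrm{add}\{\Lambda^\mu\}) \to \DD^b(\P_{d,\k})$ is itself an equivalence. Essential surjectivity follows from iteratively resolving every injective $S^\mu$ by $\Lambda$-tensor products via Koszul sequences, so that each object of $\DD^b(\P_{d,\k})$ becomes quasi-isomorphic to a bounded complex of $\Lambda^\nu$'s. Full faithfulness amounts to the Ext-vanishing $\Ext^i_{\P_{d,\k}}(\Lambda^\mu,\Lambda^\lambda)=0$ for $i>0$, a standard tilting property of exterior powers, which can be deduced either from the highest-weight structure of $\P_{d,\k}$ or, in the spirit of the paper, from Lemma \ref{lm-iso} applied termwise to a Koszul-type comparison between $S^\nu$-coresolutions of $\Lambda^\lambda$ and $\Lambda^\nu$-coresolutions of $\Gamma^\lambda$ (the latter obtained by applying $\H(\Lambda^d,-)$ to the former). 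Once this equivalence is in hand, $\Theta$ is displayed as an endo-equivalence of $\DD^b(\P_{d,\k})$, and reading the construction backwards identifies its quasi-inverse with $\Theta'$.
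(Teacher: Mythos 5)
Your proof takes a genuinely different route from the paper's, and it is more roundabout. The paper's own argument is shorter and more direct: it uses the equivalences $\DD^b(\P_{d,\k})\simeq\KK^b(\Proj)$ and $\DD^b(\P_{d,\k})\simeq\KK^b(\Inj)$ and verifies \emph{directly} that $\Theta\circ\Theta'\simeq\Id$ on bounded complexes of projectives $\Gamma^\mu$ and that $\Theta'\circ\Theta\simeq\Id$ on bounded complexes of injectives $S^\mu$. Concretely, if $I$ is a complex of $S^\mu$'s with differentials $d_k$, then Lemma \ref{lm-acyclic} lets one compute $\Theta'\Theta(I)$ explicitly as a complex of $S^\mu$'s with differentials $\H(\Lambda^d,\H(\Lambda^d,d_k)^\sharp)^\sharp$, and the commuting square of Lemma \ref{lm-iso} shows this equals $d_k$ on the nose. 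This verifies both that $\Theta$ is an equivalence \emph{and} that $\Theta'$ is its inverse, in one stroke.

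Your detour through $\KK^b(\mathrm{add}\{\Lambda^\mu\})$ establishes that the $\Lambda^\mu$ form a tilting-type generating family, which is conceptually interesting but adds overhead the paper avoids: essential surjectivity via Akin--Buchsbaum resolutions and full faithfulness via $\Ext^{>0}(\Lambda^\mu,\Lambda^\lambda)=0$. Note that this $\Ext$-vanishing is \emph{precisely} the $\H(\Lambda^d,-)$-acyclicity of $\Lambda^\lambda$, already stated in Lemma \ref{lm-acyclic}; you needn't re-derive it from a highest weight structure or a ``Koszul-type comparison'' (the latter sketch is in danger of circularity, since it would seem to use the very acyclicity being proved). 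The genuine gap in your write-up is the last sentence: ``reading the construction backwards'' does not by itself promote the \emph{pointwise} identifications $\Theta'\Theta(\Lambda^\mu)\simeq\Lambda^\mu$ and $\Theta\Theta'(\Gamma^\mu)\simeq\Gamma^\mu$ to a \emph{natural} isomorphism of triangle functors. To close this, one must apply $\H(\Lambda^d,-)$ and $\sharp$ to the differentials of a complex as above, and it is exactly the commuting diagram of Lemma \ref{lm-iso} that makes the composite the identity on $\hom$-groups. In other words, the ingredient you are reaching for at the end is what the paper uses from the start, which makes your auxiliary equivalence $\KK^b(\mathrm{add}\{\Lambda^\mu\})\simeq\DD^b(\P_{d,\k})$ logically superfluous for the statement at hand.
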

\begin{proof}
To check $\Theta(\Theta(C^\sharp)^\sharp)\simeq C$ we can restrict to complexes of projectives. Then the result follows by lemma \ref{lm-iso}. To check that $\Theta(\Theta(C)^\sharp)^\sharp\simeq C$, we can restrict to complexes of injectives. Once again the result follows by lemma \ref{lm-iso}.
\end{proof}

To give the reader an intuition of the behaviour of $\Theta$, we gather a few explicit computations $\Theta F$ following from lemmas \ref{lm-acyclic} and \ref{lm-comput}.
\begin{example}\label{exemple} We have the following isomorphisms in $\DD^b(\P_{\k})$:
$$\Theta(S^\lambda)\simeq\Lambda^\lambda\;,\quad \Theta(\Lambda^\lambda)\simeq\Gamma^\lambda \;, $$
and $\Theta(\Gamma^2)$ is isomorphic to the cochain complex with $\otimes^2$ in degree $0$, $\Gamma^2$ in degree $1$ and differential $\otimes^2\to \Gamma^2$ given by the multplication (sending $v_1\otimes v_2$ onto $v_1\otimes v_2+v_2\otimes v_1\in \Gamma^2(V)=(V^{\otimes 2})^{\Si_2}$). 

Observe that the degree zero homology of $\Theta(\Gamma^2)$ equals $\Lambda^2$ if $2\ne 0$ in the ground ring $\k$, and $\Gamma^2$ otherwise. Moreover, $\Theta(\Gamma^2)$ has trivial homology in degree $1$ if and only if  $2$ is invertible in the ground ring $\k$.
\end{example}

\section{Derived functors \`a la Dold-Puppe}\label{section-DP}

The derived functors $L_*F(V;n)$ of a non-additive functor $F$ were introduced by Dold and Puppe in \cite{DP1,DP2}. In  section \ref{subsec-rappel-DP}, we recall the classical definition of derived functors. In section \ref{subsec-DP-strict}, we adapt derivation of functors to the framework of strict polynomial functors. 

\subsection{Derivation of ordinary functors}\label{subsec-rappel-DP}

\subsubsection{The Dold-Kan correspondence} Let $\A$ be an abelian category and let $s\A$ denote the category of simplicial objects in $\A$. The Dold-Kan correspondence asserts that the normalized chain functor $\N$ yields an equivalence of categories (with inverse denoted by $\K$):
$$\N:s\A\leftrightarrows \Ch_{\ge 0}(\A):\K \;,$$
and moreover that $\N$ and $\K$ preserve homotopy relations between maps \cite[Section 3]{DP2}, \cite{Do}, \cite[Thm 8.4.1]{Weibel}.

For the homotopy types of complexes, it is equivalent to use the normalized chain functor $\N$ and the unnormalized chain functor $\C$. Indeed, the canonical projection $\C X\to \N X$ is a homotopy equivalence \cite[Satz 3.22]{DP2}, \cite[VIII Thm 6.1]{ML}.

\subsubsection{Derived functors}
Let $F:\A\to \B$ be a (non-necessarily additive) functor between two abelian categories, and assume that $\A$ has enough projectives. Let $V\in \A$ and let $n\ge 0$. Dold and Puppe defined the derived functors $L_qF(V;n)$ (called the $q$-th derived functor of $F$ with height $n$) by the following formula:
$$L_qF(V;n):= H_q\big(\C F\K (P[-n])\big)\;, $$
where $P$ is a projective resolution of $V$ in $\A$, and $[-n]$ refers to the suspension in $\Ch_{\ge 0}(\A)$ (thus $P[-n]_i= P_{i-n}$).

\begin{example}
If $F$ is additive, one proves \cite[4.7]{DP2} that  $L_q F(V;n)$ equals the usual left derived functor $L_{q-n}F(V)$, as defined e.g. in \cite[Chap. 2]{Weibel}.
\end{example}

\begin{remark}
Using the language of homotopical algebra, the definition of Dold and Puppe may be rephrased as follows. Consider the standard model structure on $s\A$ \cite[II.4]{Quillen}. Then $F$ induces a derived functor (in the sense of Quillen) $LF:\mathrm{ho}(s\A)\to \mathrm{ho}(s\B)$. Now $L_q F(V;n)$ is just the $q$-th homotopy group of the value of $L F$ on $\K(V;n)$, where the latter denotes a simplicial object with trivial homotopy groups except for $\pi_n(\K(V;n))=V$.
\end{remark}

\subsubsection{Products and shuffle maps}
Let $\A$ be an additive category, equipped with a symmetric monoidal product $\otimes:\A\times\A\to \A$, which is additive with respect to both variables. Then both $\Ch_{\ge 0}(\A)$ and $s\A$ inherit a symmetric monoidal product. So we can associate two chain complexes to a pair $(X,Y)$ of simplicial objects in $\A$, namely $(\C X) \otimes (\C Y)$ and $\C(X\otimes Y)$. The Eilenberg-Zilber theorem asserts that the shuffle map:
$$\nabla:(\C X) \otimes (\C Y)\to \C(X\otimes Y)$$
is a homotopy equivalence. It is natural with respect to $X,Y$ and associative, symmetric and unital (in other words: the chain functor $\C$ is a lax symmetric monoidal functor), as we can see it from the explicit expression of $\nabla$ \cite[Satz 2.15]{DP2}, \cite[VIII Thm 8.8]{ML}, \cite[Section 8.5.3]{Weibel}.

In particular, if our functors take values in the category of $\k$-modules, the shuffle map induces a morphism (natural in $F,G,V$):
$$ L_pF(V;n)\otimes L_qG(V;n) \to L_{p+q}(F\otimes G)(V;n). $$
Indeed, the left hand side injects (via the K\"unneth morphism) into the homology of the complex $\C F\K(P[-n])\otimes \C G\K(P[-n])$, while the right hand side is the homology of the complex $\C \left( F\K(P[-n]) \otimes G\K(P[-n])\right)$.

\subsection{Derivation of strict polynomial functors}\label{subsec-DP-strict}

We shall denote for short by $K(n)$ the simplicial free $\k$-module $\K(\k[-n])$. If $V$ is a $\k$-module, the definition of the Dold-Kan functor $\K$ yields an isomorphism, natural with respect to $V$:  
$$K(n)\otimes V\simeq \K(V[-n]).\qquad(**)$$

We observe that the definition of derived functors from the preceding section makes sense in the framework of strict polynomial functors. To be more specific, for $F\in\P_{d,\k}$, $q\ge 0$ and $n\ge 0$, we define the $q$-th derived functor of $F$ with height $n$
$$L_qF(-;n):\Gamma^d\V_\k\to \k\text{-mod}$$
as the $q$-th homology group of the complex $\C F_{K(n)}$. 
\begin{remark}\label{rq-1} For strict polynomial functors of degree $d=0$, the above formula still makes sense. A functor of degree zero is just a constant functor with value $F$, so the simplicial $\k$-module $F_{K(n)}$ equals $F$ in each degree, and the face and degeneracy operators are identity maps. So $L_q F(V;n)$ equals zero in positive degrees and $F$ in degree zero.
\end{remark}

Derivation of strict polynomial functors extends the derivation of ordinary functors. Indeed, thanks to isomorphism $(**)$, there is a commutative diagram where the horizontal morphisms are induced by taking the $q$-th derived functors with height $n$ (and $\widetilde{\P}_{d,\k}$ is the category of strict polynomial functors with values in arbitrary $\k$-modules, cf \ref{rk-PT}).
$$\xymatrix{
\P_{d,\k}\ar[r]\ar[d]^-{\U}&\widetilde{\P}_{d,\k}\ar[d]^-{\U}\\
\F_\k\ar[r] &\F_\k
}.$$ 

Finally, we can take the direct sum of the categories $\P_{d,\k}$ and interpret the $q$-th derived functor with height $n$ as a functor $\P_{\k}\to \widetilde{\P}_{\k}$.

\subsection{Derivation in $\DD^-(\P_{\k})$} 

In this section, $\k$ is a PID. We lift the definition of derivation to the level of the derived category $\DD^-(\P_{\k})$.

Let us first introduce the chain functor $\MC$ for mixed bicomplexes.
A \emph{mixed bicomplex} in an additive category $\A$ is just an object of $\Ch^-(s\A)$. If $X$ is a mixed bicomplex in $\A$, we may apply the chain functor $\C$ to each object $X_i$ of $X$. In this way we obtain a bicomplex $(X_{i})_{j}$, whose first differential $d_1^X:(X_i)_j\to (X_{i-1})_j$ is induced by the differential of $X$ and whose second differential $d_2^X:(X_i)_j\to (X_i)_{j-1}$ is induced by the simplicial structure of $X_i$. We denote by $\MC X$ the total complex associated to this bicomplex.

\begin{definition}
Let $n$ be a positive integer and let $C\in\Ch^-(\P_{\k})$. The derived complex of $C$ is the complex $L(C;n)\in \Ch^-(\P_{\k})$ defined by:
$$L(C;n)=\MC(C_{K(n)})\;.$$
Derivation of complexes yields an additive functor:
$$\begin{array}{cccc}
L(-;n): & \Ch^-(\P_{\k}) & \to & \Ch^-(\P_{\k})\\
& C & \mapsto & L(C;n)
\end{array}.$$
\end{definition}

\begin{proposition}\label{prop-DDPd}
The functor $L(-;n)$ induces a triangle functor at the level of the derived categories, still denoted by $L(-;n)$:
$$L(-;n):\DD^-(\P_{\k})\to \DD^-(\P_{\k})\;.$$ 
\end{proposition}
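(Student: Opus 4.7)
The plan is to establish two points: first, that $L(-;n)$ sends quasi-isomorphisms to quasi-isomorphisms (so that by the universal property of localization it descends to a functor on $\DD^-(\P_\k)$); and second, that the descended functor is triangulated.

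For the first point, the key observation is that for any $V \in \V_\k$ the parameterization functor $F \mapsto F_V$ is exact on $\P_\k$: an admissible short exact sequence $F \hookrightarrow G \twoheadrightarrow H$ gives, after evaluation on $V \otimes W$ for all $W \in \V_\k$, an exact sequence of $\k$-modules, which is precisely the admissibility of $F_V \hookrightarrow G_V \twoheadrightarrow H_V$. Each simplicial level $K(n)_m$ is a finitely generated free $\k$-module (a finite direct sum of copies of $\k$ indexed by the surjections $[m] \twoheadrightarrow [n]$), hence lies in $\V_\k$. Therefore, if $f: C \to D$ is a quasi-isomorphism in $\Ch^-(\P_\k)$, then for each simplicial degree $m$ the cochain map $f_{K(n)_m}$ is a quasi-isomorphism, so the mapping cone $M$ of $f$ satisfies: each $M_{K(n)_m}$ is an acyclic cochain complex. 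The bicomplex associated to the mixed bicomplex $M_{K(n)}$ thus has acyclic slices in the cochain direction at every simplicial degree. A standard double-complex argument then shows that $\MC(M_{K(n)})$ is acyclic: the spectral sequence obtained from the filtration by simplicial degree has its $E^1$-page equal to the cochain cohomology of each slice and vanishes, and it converges strongly because each total degree of $\MC(M_{K(n)})$ involves only finitely many bidegrees (the cochain direction is bounded above on $C$ and $D$, while the $\C$-direction is nonnegatively graded). Since both $(-)_{K(n)}$ and $\MC$ commute with formation of mapping cones in the cochain direction, $\MC(M_{K(n)}) = L(M;n)$ is canonically identified with the mapping cone of $L(f;n)$, and we conclude that $L(f;n)$ is a quasi-isomorphism.

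For the second point, $L(-;n)$ is additive as the composition of the additive functors $(-)_{K(n)} \colon \Ch^-(\P_\k) \to \Ch^-(s\P_\k)$ and $\MC \colon \Ch^-(s\P_\k) \to \Ch^-(\P_\k)$. It commutes with the cochain shift $[1]$ (by direct inspection of the Koszul signs used in $\MC$) and with formation of mapping cones (as noted above). These properties imply that $L(-;n)$ sends every standard triangle $C \xrightarrow{f} D \to \mathrm{cone}(f) \to C[-1]$ to a triangle isomorphic to the standard triangle of $L(f;n)$, and hence preserves exact triangles, making it a triangle functor.

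The main technical obstacle is verifying the convergence of the bicomplex spectral sequence in the first step; the remainder of the argument is formal and relies only on the exactness of parameterization by free $\k$-modules together with the commutation of $\MC$ with shifts and mapping cones.
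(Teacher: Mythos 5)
Your proposal is correct and follows essentially the same route as the paper: preservation of quasi-isomorphisms via a spectral sequence argument on the bicomplex underlying $\MC(M_{K(n)})$, plus the observations that $L(-;n)$ is additive and commutes (strictly) with shifts and mapping cones. The only cosmetic difference is the order of presentation — the paper first descends to $\KK^-(\P_\k)$ via preservation of chain homotopies and then checks quasi-isomorphism preservation, whereas you do the latter first and handle the triangulated structure afterwards; you also spell out more explicitly why parameterization by the free modules $K(n)_m$ preserves quasi-isomorphisms and why the spectral sequence converges.
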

\begin{proof}
This is a standard verification. First, derivation preserves homotopies: if $f:C\to D$ is homotopic to zero via a homotopy $h$, then $L(f;n)$ is homotopic to zero via the homotopy $L(h;n)$. In particular, derivation induces a functor at the level of the homotopy categories $\KK^-(\P_{\k})$. Derivation commutes with suspension, actually we have
an \emph{equality} of complexes $L(C[1];n)= L(C;n)[1]$. Derivation also preserves triangles of $\KK^-(\P_{\k})$. Indeed, if $f$ is a chain map, the chain complex $L(\mathrm{Cone}(f);n)$ is \emph{equal} to $\mathrm{Cone}(L(f;n))$. So $L(-;n)$ induces a triangle endofunctor of $\KK^-(\P_{\k})$. By a standard spectral sequence argument on the bicomplexes $(C_i)_j$, this triangle functor preserves quasi-isomorphisms, whence the result.
\end{proof}

Now we give details about the compatibility of derivation with tensor products. If $X$ and $Y$ are mixed bicomplexes in $\P_\k$, their tensor product $X\ohat Y$ is defined as follows. As a bigraded object we have:
$$(X\ohat Y)_{k,\ell}=\bigoplus_{i+j=k} X_{i,\ell}\otimes Y_{j,\ell}\;.$$
The differential of an element $x\otimes y\in X_{i,\ell}\otimes Y_{j,\ell}$ is given by 
$$\partial(x\otimes y)=\partial_X(x)\otimes y+(-1)^i x\otimes \partial_Y(y)\;,$$
and the simplicial structure is the diagonal one: $d_t=d_t^X\otimes d^Y_t$, $s_t=s_t^X\otimes s_t^Y$.
The tensor product of mixed bicomplexes is a symmetric monoidal product on $\Ch^-(s\P_\k)$. The unit for $\ohat$ is the simplicial object $\K(\k)$ (obtained by applying the Dold-Kan functor $\K$ the constant functor with value $\k$) considered as a complex concentrated in degree zero, and the symmetry isomorphism $\tau:X\ohat Y\to Y\ohat X$ sends $x\otimes y\in X_{i,\ell}\otimes Y_{j,\ell}$ to $(-1)^{ij}y\otimes x$.

There are shuffle maps for mixed bicomplexes in $\P_\k$. To be more specific, we define a morphism of complexes (natural in $X,Y$): 
$$\oshuffle: \MC X \otimes \MC Y\to \MC\,(X\ohat Y)$$
by sending an element $x\otimes y\in (X_i)_k\otimes (Y_j)_\ell$ to $(-1)^{jk}\nabla(x\otimes y)$, where $\nabla$ denotes the usual shuffle map (the sign is needed to get a morphism of complexes). 

If $C$ and $D$ are bounded above cochain complexes of strict polynomial functors, the mixed bicomplex $(C\otimes D)_{K(n)}$ equals $C_{K(n)}\ohat D_{K(n)}$, so the complex $L(C\otimes D;n)$ equals $\MC\,(C_{K(n)}\ohat D_{K(n)})$. Hence, the shuffle map $\oshuffle$ yields a morphism of complexes, natural with respect to $C$ and $D$:
$$L(C;n)\otimes L(D;n)\xrightarrow[]{\oshuffle} L(C\otimes D;n).$$
As already observed in remark \ref{rq-1}, if $\k$ denotes the constant functor with value $\k$, the complex $L(\k;n)$ equals $\k$ in each degree. So if we consider $\k$ as a complex concentrated in degree zero, there is a (unique) quasi-isomorphism 
$$\phi: \k\to L(\k;n)$$ 
which equals the identity map in degree zero.

\begin{proposition}\label{prop-sym}
Derivation with height $n$ induces a lax symmetric monoidal functor at the level of complexes
$$(L(-;n),\oshuffle,\phi)\;:\;(\Ch^-(\P_\k),\otimes,\k)\to (\Ch^-(\P_\k),\otimes,\k),$$
and a symmetric monoidal functor at the level of the derived category:
$$(L(-;n),\oshuffle,\phi)\;:\;(\DD^-(\P_{\k}),\otimes,\k) \to (\DD^-(\P_{\k}),\otimes,\k).$$
\end{proposition}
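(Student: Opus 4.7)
The plan is to reduce every verification to the classical shuffle map $\nabla$ (Eilenberg--Zilber), whose naturality, associativity, symmetry, unitality and the fact that it is a homotopy equivalence are all recalled in section \ref{subsec-rappel-DP}. The only new ingredient is the bookkeeping for Koszul signs coming from the complex direction in a mixed bicomplex. The essential structural input is the tautological identity $(C\otimes D)_{K(n)} = C_{K(n)}\ohat D_{K(n)}$ of mixed bicomplexes, which holds on the nose because the parametrization $(-)_{K(n)}$ commutes with tensor products in each simplicial degree and the face/degeneracy operators are diagonal. Hence $L(C\otimes D;n) = \MC\,(C_{K(n)}\ohat D_{K(n)})$, and the entire proposition reduces to showing that the chain functor $\MC$ from mixed bicomplexes to $\Ch^-(\P_\k)$, equipped with $\oshuffle$ and $\phi$, is lax symmetric monoidal.

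I would first check that $\oshuffle$ is a chain map. Decompose the total differential into its complex-direction part $d_1$ and its (normalized) simplicial-direction part $d_2$. The latter terms balance because $\nabla$ is already a chain map in each fixed complex-bidegree; the former terms involve an extra Koszul sign $(-1)^i$ that shuffles through the tensor product, and the twist $(-1)^{jk}$ inserted in the definition of $\oshuffle$ is exactly what is needed to cancel the discrepancy. Naturality in $C,D$ is then immediate from naturality of $\nabla$. Associativity and unitality follow in the same way from the corresponding properties of $\nabla$ together with the fact that the constant simplicial object $\K(\k)$ is a unit for $\ohat$; the quasi-isomorphism $\phi\colon \k\to L(\k;n)$ is simply the identification of $\k$ with the degree-zero summand and plays the role of the unit $\k\to \MC\,\K(\k)$. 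Compatibility with the symmetry $\tau$ is the most delicate point: the symmetry of $\ohat$ involves a sign $(-1)^{ij}$ in the complex direction while $\nabla$ is symmetric in the simplicial direction, so one must verify that the twist $(-1)^{jk}$ appears symmetrically on both sides of the hexagon diagram — this is a direct sign computation.

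To upgrade the lax monoidal structure to a genuinely symmetric monoidal structure at the derived level, it suffices to show that $\oshuffle$ is a quasi-isomorphism whenever $C,D\in\Ch^-(\P_\k)$. By Eilenberg--Zilber, $\nabla$ is a homotopy equivalence in each fixed pair of complex-bidegrees, so the induced map of bicomplexes (obtained by applying $\C$ degreewise in the simplicial direction) is a quasi-isomorphism column by column. A standard bounded-below spectral sequence argument on the Koszul filtration of the total bicomplex then shows that $\oshuffle$ induces an isomorphism on the homology of the totalizations, which is exactly what must be proved. Likewise $\phi$ is a quasi-isomorphism because $L(\k;n)$ has constant faces and degeneracies, so its normalized chains vanish in positive degree.

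The main obstacle, as indicated, is neither conceptual nor computational but purely a matter of signs: three gradings interact (the chain degree in $\Ch^-(\P_\k)$, the simplicial degree, and the total-complex degree coming from $\MC$), and one has to verify that the ad hoc twist $(-1)^{jk}$ in the definition of $\oshuffle$ reconciles them simultaneously in the chain-map, associativity and symmetry diagrams. I would carry this out once in the associativity diagram and then appeal to the universal coherence of Koszul signs to get the others.
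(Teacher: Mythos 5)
Your proposal follows essentially the same route as the paper: prove the stronger statement that $(\MC,\oshuffle,\phi)$ is lax symmetric monoidal on all mixed bicomplexes using the strict (not-up-to-homotopy) unitality, associativity and symmetry of $\nabla$, then deduce the derived-level claim by showing $\oshuffle$ is a quasi-isomorphism via a spectral sequence comparison. The only cosmetic difference is that the paper handles the sign twist $(-1)^{jk}$ in the quasi-isomorphism step by explicitly factoring $\oshuffle$ as $\mu\circ\nu$ (with $\nu$ an isomorphism absorbing the sign and $\mu$ the row-wise shuffle map), so the comparison is between honest morphisms of bicomplexes; your filtration argument is the same idea stated less formally.
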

\begin{proof}
We shall prove slightly more general facts. First, let us denote by $\phi:\C\K(\k)\to \k$ the morphism of chain complexes which is the identity map in degree zero. Then we have a lax monoidal functor: $$(\MC,\oshuffle,\phi):(\Ch^-(s\P_\k),\ohat,\K(\k))\to (\Ch^-(\P_\k),\otimes,\k)\;.$$
Indeed, unity, associativity and commutativity for $\oshuffle$ follow from the fact that the shuffle map $\nabla$ is unital, associative and commutative (not up to homotopy!), as we can see it from the explicit formula of \cite[VIII, Thm 8.8]{ML}. In particular, when we restrict to mixed bicomplexes of the form $C_{K(n)}$, we obtain the first part of proposition \ref{prop-sym}. 

The second part of proposition \ref{prop-sym} follows from the slightly more general fact that for arbitrary mixed bicomplexes $X$ and $Y$, the shuffle morphism $\oshuffle: \MC X \otimes \MC Y\to \MC\,(X\ohat Y)$ is a quasi-isomorphism. To prove this, we write $\oshuffle$ as the composition of two quasi-isomorphisms $\mu$ and $\nu$ defined as follows.
The morphism of quadrigraded objects $(\C X_{i})_k\otimes (\C Y_{j})_\ell\to (\C X_{i})_k\otimes (\C Y_{j})_\ell$ mapping $x\otimes y$ to $(-1)^{jk}x\otimes y$ induces an isomorphism $\nu$ between the chain complex $\MC(X)\otimes\MC(Y)$ and the totalization of the bicomplex 
\begin{align*}&(\C X\otimes\C Y)_{m,n}:=\bigoplus_{i+j=m,k+\ell=n} (\C X_{i})_k\otimes (\C Y_{j})_\ell\;.
\end{align*}
(The differentials of this bicomplex are defined as follows. The restriction of the first differential of $(\C X\otimes\C Y)_{m,n}$ to $(\C X_{i})_k\otimes (\C Y_{j})_\ell$ equals $d_1^X\otimes\Id+(-1)^{i} \Id\otimes d_1^Y$, and the restriction of the second differential equals $d_2^X\otimes\Id+(-1)^k \Id\otimes d_2^Y$.)
There is a morphism of bicomplexes from $(\C X\otimes\C Y)_{m,n}$ to the bicomplex
\begin{align*}
&\C(X\ohat Y)_{m,n}:= \bigoplus_{i+j=m}(\C(X\ohat Y)_{i+j})_{n}\;.
\end{align*}
whose restriction to the $n$-th rows equals $\nabla: (\C X\otimes\C Y)_{*,n}\to \C(X\ohat Y)_{*,n}$. Its totalization  $\mu$ is a quasi-isomorphism. Indeed, $\nabla$ is a quasi-isomorphism (it is even a homotopy equivalence) so we prove this by comparing the spectral sequences associated to the bicomplexes.
Now $\MC(X\ohat Y)$ is the total complex associated to $\C(X\ohat Y)_{m,n}$ and $\oshuffle=\mu\circ\nu$, so $\oshuffle$ is a quasi-isomorphism.
\end{proof}

\subsection{Iterated derivations}
Let $m$ and $n$ be positive integers. Then we may compose derivation with height $m$ and derivation with height $n$ to get a functor
$$ L(-;n)\circ L(-;m): \DD^-(\P_\k)\to \DD^-(\P_\k)\;.$$ 
Since unnormalized derivations are triangle functors and symmetric monoidal functors, so is the composite $L(-;n)\circ L(-;m)$.
The main result about iterated derivations is the following.
\begin{theorem}\label{thm-iter} There is 
an isomorphism of endofunctors of $\DD^-(\P_{\k})$:
$$ L(-;n)\circ L(-;m)\simeq L(-;n+m)\;. $$ 
This isomorphism is an isomorphism of triangle functors as well as an isomorphism of the symmetric monoidal functors. 
\end{theorem}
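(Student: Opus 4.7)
The plan is to identify both $L(L(-;m);n)$ and $L(-;m+n)$ with derived complexes built from the bisimplicial parameter $K(m)\otimes K(n)$, and to compare them via Eilenberg--Zilber. Since both sides are triangulated and constructed functorially at the unnormalized level, it suffices to build a natural quasi-isomorphism $L(L(F;m);n) \simeq L(F;m+n)$ for $F \in \P_{d,\k}$ viewed as a complex concentrated in degree zero; the extension to arbitrary $C\in\DD^-(\P_\k)$ follows by functoriality and a standard spectral sequence argument on the resulting mixed multi-complex.

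Unwinding the definitions and using the parameterization identity $(F_V)_W = F_{V\otimes W}$, the complex $L(L(F;m);n)$ is the total complex of the chain bicomplex obtained by applying the chain functor $\C$ in each simplicial direction to the bisimplicial object $F_{K(m)\otimes K(n)}$ in $\P_\k$. The Eilenberg--Zilber theorem, applied to this bisimplicial object via the shuffle map in both directions, then yields a natural quasi-isomorphism between this total complex and $\C(F_{\diag(K(m)\otimes K(n))})$, the unnormalized chain complex of the diagonal simplicial object.

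It remains to compare $\diag(K(m)\otimes K(n))$ with $K(m+n)$. Both are simplicial free $\k$-modules, and their chain complexes are quasi-isomorphic to $\k[-(m+n)]$ --- for the diagonal via the Eilenberg--Zilber quasi-isomorphism $\N K(m)\otimes \N K(n)\simeq \k[-(m+n)]$, and for $K(m+n)$ tautologically. Hence they are bounded-below complexes of projectives with isomorphic homology, so they are chain homotopy equivalent, and via Dold--Kan this transports to a combinatorial simplicial homotopy equivalence of simplicial $\k$-modules. Since any functor, in particular any strict polynomial functor, preserves combinatorial simplicial homotopies (these are defined purely through face/degeneracy identities, requiring no additive combinations), applying $F$ with parameter and then $\C$ yields a chain homotopy equivalence $\C(F_{\diag(K(m)\otimes K(n))}) \simeq \C(F_{K(m+n)}) = L(F;m+n)$, completing the comparison on objects.

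Compatibility with the triangulated structure is automatic since all constructions respect cones and suspensions at the unnormalized level. The symmetric monoidal compatibility demands that the resulting isomorphism intertwine the shuffle maps $\oshuffle$ defining the monoidal structures on $L(-;n)\circ L(-;m)$ and $L(-;m+n)$; this reduces to the classical coherence of the Eilenberg--Zilber shuffle map under iteration and tensor product. I expect the main obstacle to lie in the third step: producing the homotopy equivalence $\diag(K(m)\otimes K(n)) \simeq K(m+n)$ in a canonical and natural fashion, compatible simultaneously with the symmetry swapping the two factors and with further iteration, so that the resulting isomorphism is one of \emph{symmetric monoidal} triangle functors rather than merely of triangle functors. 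Making these coherence constraints explicit should be the technical heart of the proof.
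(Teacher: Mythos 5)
Your skeleton is the same as the paper's: identify $L(L(-;m);n)$ with the derived complex built from the bisimplicial parameter $K(m)\otimes K(n)$, pass to the diagonal $\diag(K(m)\otimes K(n))$ via Eilenberg--Zilber, compare the diagonal with $K(m+n)$ via Dold--Kan, and transport the resulting simplicial homotopy equivalence through $F$. Two points of divergence matter.

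First, your proposed reduction (prove the statement for $F\in\P_{d,\k}$ placed in degree zero, then extend to complexes ``by functoriality and a spectral sequence argument'') is not what the paper does, and it is delicate: a natural transformation defined on objects does not obviously assemble into a chain map of mixed multicomplexes once the internal differential of $C$ interacts with the signs in $\oshuffle$, and the monoidal compatibility certainly does not come for free from such a reduction. The paper reduces in the \emph{opposite} direction: after reducing to $\Ch_{\ge 0}$ by suspension-invariance, it replaces $C$ by the simplicial object $\K(C)$ (so that $\C\K(C)\simeq C$ with values still in $\P_{d,\k}$), reducing to the case $C=\C X$ where everything lives in a single simplicial world with no extra chain-complex grading. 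This reformulation is what makes the sign and coherence bookkeeping tractable.

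Second, and more seriously, you correctly identify the monoidal coherence --- the compatibility of the comparison map with $\oshuffle$ under iteration --- as the technical heart, but you offer no argument for it, gesturing only at ``classical coherence of the Eilenberg--Zilber shuffle map.'' The paper's actual resolution is the \emph{generalized Eilenberg--Zilber theorem} (its Proposition~\ref{prop-general-EZ}, proved by acyclic models): any two ``Eilenberg--Zilber maps'' $\C\diag^\alpha X\to\C\diag^\beta X$ (natural in the multisimplicial object $X$ and equal to the identity in degree zero) are naturally chain homotopic. Once one has reduced to the simplicial case, every arrow in the relevant coherence square --- including $\zeta$, $\oshuffle$, and $L(\oshuffle;m)$ --- is expressed as an explicit Eilenberg--Zilber map on a $6$-simplicial object, so commutativity up to natural homotopy is immediate from that uniqueness statement without any explicit coherence computation. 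Without this tool (or a substitute for it), your proof cannot be completed as written; the gap is not a detail but precisely the step you flagged.
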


\begin{corollary}\label{cor-iter}
There is an isomorphism of endofunctors of $\DD^-(\P_{\k})$:
$$L(-;n)\simeq \underbrace{L(-;1)\circ\dots \circ L(-;1)}_{\text{$n$ times}}\;.$$
This isomorphism is an isomorphism of triangle functors as well as an isomorphism of the symmetric monoidal functor.
\end{corollary}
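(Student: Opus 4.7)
The plan is to deduce this immediately from Theorem \ref{thm-iter} by induction on $n$, with no new computations required. The base case $n=1$ is a tautology: both sides are literally $L(-;1)$. For the inductive step, assume that an isomorphism
$$L(-;n-1) \;\simeq\; \underbrace{L(-;1)\circ\cdots\circ L(-;1)}_{n-1 \text{ times}}$$
of triangle functors and of symmetric monoidal functors has already been constructed. I would then whisker this isomorphism on the left by $L(-;1)$ to obtain an isomorphism
$$L(-;1)\circ L(-;n-1) \;\simeq\; \underbrace{L(-;1)\circ\cdots\circ L(-;1)}_{n \text{ times}},$$
and finally invoke Theorem \ref{thm-iter} with $m=n-1$ to identify the left-hand side with $L(-;n)$. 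Composing the two isomorphisms yields the desired isomorphism.

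The only points to verify are formal: whiskering an isomorphism of triangle (resp.\ symmetric monoidal) functors on the left by another triangle (resp.\ symmetric monoidal) functor produces an isomorphism of the same type, and composition of such isomorphisms preserves the type. Since $L(-;1)$ is a triangle functor by Proposition \ref{prop-DDPd} and a symmetric monoidal functor by Proposition \ref{prop-sym}, both whiskerings are legitimate, and the inductive construction transports all the structure automatically.

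There is no genuine obstacle here: the content is entirely in Theorem \ref{thm-iter}, and the corollary is just its iterated application. If anything, the only thing worth double-checking is that one does the induction in a way compatible with the associativity coherence for composition of functors (so that the $n$-fold iterated composite on the right-hand side is unambiguously defined up to canonical isomorphism), but this is a standard categorical fact and requires no argument beyond the observation that composition of functors is strictly associative.
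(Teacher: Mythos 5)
Your proof is correct and is exactly what the paper intends: the corollary is stated immediately after Theorem \ref{thm-iter} without proof, precisely because it is the evident induction you carry out. The only substance is the formal observation that whiskering a monoidal (resp.\ triangle) natural isomorphism by a monoidal (resp.\ triangle) functor gives one of the same type, and you flag this correctly.
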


We will actually prove theorem \ref{thm-iter} at the level of chain complexes. To be more specific, let us consider $L(-;m)$ and $L(-;n)$ as endofunctors of the category $\Ch^-(\P_{\k})$. Then theorem \ref{thm-iter} is a consequence of the following stronger statement.

\begin{theorem}\label{thm-iter-strong}
Let $C\in \Ch^-(\P_{d,\k})$ and let $n$ be a positive integer. There is a morphism of complexes, natural with respect to $C$,
$$L(L(C;n);m)\xrightarrow[]{\zeta}  L(C;n+m)\;,$$
satisfying the following properties:
\begin{enumerate}
\item[(1)] $\zeta$ is a homotopy equivalence,
\item[(2)] $\zeta$ commutes with suspension, i.e. there is a commutative diagram:
$$\xymatrix{
L(L(C[1];n);m)\ar[r]^-{\zeta}\ar[d]^-{=}& L(C[1];n+m)\ar[d]^-{=}\\
L(L(C;n);m)[1]\ar[r]^-{\zeta[1]}& L(C;n+m)[1]
}.$$
\item[(3)] If $C\in\Ch^-(\P_{d,\k})$ and $D\in\Ch^-(\P_{e,\k})$, the following diagram commutes up to homotopy:
$$\xymatrix{
L(L(C;n);m)\otimes L(L(D;n);m)\ar[r]^-{\zeta^{\otimes 2}}\ar[d]^-{\oshuffle}& L(C;n+m)\otimes L(D;n+m)\ar[dd]^-{\oshuffle}\\
L(L(C;n)\otimes L(D;n);m)\ar[d]^-{L(\oshuffle;m)}\\
L(L(C\otimes D;n);m)\ar[r]^-{\zeta}& L(C\otimes D;n+m)
}.$$
\end{enumerate}
\end{theorem}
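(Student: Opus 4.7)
The proof combines two classical tools: the Eilenberg--Zilber theorem for bisimplicial objects in an additive category, and the Dold--Kan identification of the simplicial $\k$-module $K(n+m)$ with the diagonal simplicial tensor product $K(n)\otimes K(m)$. I plan to construct $\zeta$ as the composite of two natural chain homotopy equivalences built from these two tools, and then to verify the three properties in turn.

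\textbf{Construction of $\zeta$.} Using the canonical identification $(F_X)_Y=F_{X\otimes Y}$ for parameterized strict polynomial functors, one sees that $L(L(C;n);m)$ is the total complex associated to the trigraded object $(C_i)_{K(n)_j\otimes K(m)_\ell}$, with chain differential coming from $C$ and the two simplicial structures coming from $K(n)$ and $K(m)$. For each chain degree $i$, the family $\bigl((C_i)_{K(n)_j\otimes K(m)_\ell}\bigr)_{j,\ell\ge 0}$ is a bisimplicial object in the additive category $\P_{d,\k}$, and the Eilenberg--Zilber shuffle for bisimplicial objects yields a chain homotopy equivalence from its total bicomplex to $\C((C_i)_{K(n)\otimes K(m)})$, where $K(n)\otimes K(m)$ now denotes the diagonal simplicial tensor product. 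Assembled over $i$ with Koszul signs, these produce a chain map
\[\sigma:L(L(C;n);m)\longrightarrow \MC(C_{K(n)\otimes K(m)}),\]
which is a homotopy equivalence. In parallel, the Alexander--Whitney map $AW:\N(K(n)\otimes K(m))\to \N K(n)\otimes\N K(m)=\k[-n-m]=\N K(n+m)$ is a chain homotopy equivalence, and applying the Dold--Kan functor $\K$ together with the natural isomorphism $\K\N\cong\mathrm{Id}$ produces a simplicial homotopy equivalence $\alpha:K(n)\otimes K(m)\to K(n+m)$. The induced map $\MC(C_\alpha):\MC(C_{K(n)\otimes K(m)})\to L(C;n+m)$ is again a homotopy equivalence, and I set $\zeta:=\MC(C_\alpha)\circ \sigma$.

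\textbf{Verification and main obstacle.} Property (1) is immediate from the construction. Property (2) follows because suspension in $C$ commutes strictly with the parameter functor $(-)_X$ and with the $\MC$ construction (as in the proof of Proposition \ref{prop-DDPd}), so it commutes with $\sigma$ and $\MC(C_\alpha)$ on the nose. The main obstacle is property (3). To prove it, I plan to recognize both composites in the diagram of (3) as iterated shuffle maps applied to a common multisimplicial object, and then to invoke the associativity, unitality and naturality of the Eilenberg--Zilber shuffle construction \cite[VIII Thm 8.8]{ML}. Indeed, $\oshuffle$, the bisimplicial shuffle defining $\sigma$, and the Alexander--Whitney map underlying $\alpha$ are all instances of the same chain-level shuffle machinery, and the coherence of this machinery supplies the required chain homotopy. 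Writing this homotopy down explicitly while tracking the Koszul signs contributed by each shuffle is the principal technical chore, but it presents no conceptual obstacle once the structural reduction has been made.
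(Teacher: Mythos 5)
Your construction of $\zeta$ is essentially the paper's (the bisimplicial Eilenberg--Zilber map followed by the map induced by the Dold--Kan equivalence $K(n)\otimes K(m)\simeq K(n+m)$; the paper separates these as its Step~1 and Step~2), and your verifications of (1) and (2) are fine. However, the argument for (3) has a genuine gap, in two related places.

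First, the acyclic-models uniqueness result that actually drives the proof (Proposition~\ref{prop-general-EZ}: any two natural maps between partial diagonals of a multisimplicial object, agreeing in degree zero, are naturally chain homotopic) is about genuine multisimplicial objects, not about mixed complexes. The object $(C_i)_{K(n)_j\otimes K(m)_\ell}$ carries a \emph{chain} index $i$ and only two simplicial indices; it is not trisimplicial, so you cannot directly "recognize both composites as iterated shuffle maps applied to a common multisimplicial object." The paper addresses this by first reducing (via suspension and property~(2)) to nonnegatively graded $C$, and then replacing $C$ by $\C\K(C)$, so that $C$ is the chain complex of a simplicial object and the whole construction becomes genuinely multisimplicial. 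Your sketch omits this reduction, and without it the appeal to Eilenberg--Zilber coherence does not type-check.

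Second, the map $\alpha:K(n)\otimes K(m)\to K(n+m)$ that you build from Alexander--Whitney and Dold--Kan is a simplicial map between two \emph{different} simplicial modules, not a chain map between diagonals of a single multisimplicial object. It is therefore not an Eilenberg--Zilber map in the sense that the coherence theorem governs, and invoking the "associativity, unitality and naturality" of $\nabla$ does not control it (and in any case Alexander--Whitney is not associative or commutative on the nose). What is actually needed — and what the paper uses — is to isolate this replacement (its Step~1), observe that it commutes on the nose with $\oshuffle$ and suspension by naturality, and then carry out the entire diagram chase for (3) in terms of the diagonal model $\L(C;n+m)=\MC(C_{K(n,m)})$, where every arrow really is an Eilenberg--Zilber map between partial diagonals of one $6$-simplicial object, so that Proposition~\ref{prop-general-EZ} (not mere naturality of $\nabla$) supplies the homotopy. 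Without the uniqueness-up-to-natural-homotopy statement you have no mechanism to compare the two composites in (3), since they are not equal on the nose.
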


The remainder of the section is devoted to the proof of theorem \ref{thm-iter-strong}. The proof will essentially result from the generalized Eilenberg-Zilber Theorem.
\subsubsection{The generalized Eilenberg-Zilber theorem}
In this paragraph, $\A$ is an additive full subcategory of the category of modules over a ring $R$, containing $R$ as an object. For example, take $\A=\P_{d,\k}$, which is an additive subcategory of the category of modules over the Schur algebra (cf. section \ref{subsubsec-SGR} and appendix \ref{app}). 

An $n$-simplicial object in $\A$ is an $n$-graded object $X_{i_1,\dots,i_n}$ equipped with $n$ simplicial structures (each simplicial structure being relative to one of the indices of $X$), which commute with one another.
We denote by $\C X$ the total chain complex associated to an $n$-simplicial object. It is the total complex of the multicomplex with $n$ differentials obtained by applying the chain functor to each one of the simplicial structures of $X$.

If $\alpha:\{1,\dots,n\}\twoheadrightarrow \{1,\dots,d\}$ is a surjective map, we denote by $\diag^\alpha X$ the associated partial diagonal. That is, $\diag^\alpha X$ is the $d$-simplicial object with 
$$(\diag^\alpha X)_{k_1,\dots,k_d}= X_{k_{\alpha(1)}, k_{\alpha(2)},\dots, k_{\alpha(n)}}$$
and the corresponding diagonal simplicial structure. 

We shall often denote surjective maps $\alpha:\{1,\dots,n\}\twoheadrightarrow \{1,\dots,d\}$ as $d$-tuples where the $i$-th element of the list consists of the elements of the set $\alpha^{-1}\{i\}$, separated by symbols `$+$'. For example the map $\alpha:\{1,2,3\}\to \{1,2\}$ with $\alpha(1)=\alpha(3)=2$ and $\alpha(2)=1$ will be denoted by $(2,1+3)$. With this more suggestive notation, we have:
$$(\diag^{(2,1+3)}X)_{i,j}=X_{j,i,j}\;,$$  
and the first simplicial structure of $\diag^{(2,1+3)}X$ is given by the second simplicial structure of $X$, while the second simplicial structure of $\diag^{(2,1+3)}X$ is given by taking the diagonal simplicial structure of the first and the third simplicial structure of $X$.

The formula of the shuffle map $\nabla$ involved in the classical Eilenberg-Zilber theorem \cite[VIII Thm 8.8]{ML} actually yields a chain map, which is natural with respect to the bisimplicial object $X$, and which equals the identity in degree zero:
$$\nabla:\C X=\C \diag^{(1,2)}X\to \C\diag^{(1+2)}X\;. $$
This example justifies the following terminology.
\begin{definition}Let $\alpha$ and $\beta$ be surjective maps. An Eilenberg-Zilber map is a morphism of complexes, natural with respect to $n$-simplicial objects $X$, and which equals the identity map in degree zero:
$$\C\diag^\alpha(X)\to \C\diag^\beta(X).$$  
\end{definition}

\begin{example}~ 
\begin{enumerate}

\item The shuffle map $\nabla:\C \diag^{(1,2)}X\to \C\diag^{(1+2)}X$ is an Eilenberg-Zilber map.

\item The composite of Eilenberg-Zilber maps is an Eilenberg-Zilber map. 

\item If $\sigma\in\Si_n$ is a permutation of $\{1,\dots,n\}$, there is an Eilenberg-Zilber map:
$$\begin{array}{cccc}
\sigma:& \C X &\to & \C\diag^\sigma X\\
& x &\mapsto & \epsilon(\sigma,x) x
\end{array}$$
where $\epsilon(\sigma,x)$ is a sign defined as follows. If $x$ has $n$-degree $(i_1,\dots,i_n)$, we consider a free graded ring $R$ over generators $e_1,\dots,e_n$ such that each $e_k$ has degree $i_k$. Then $\epsilon(\sigma,x)\in\{\pm 1\}$ is the sign such that
$$e_1\cdot\dots\cdot e_n = \epsilon(\sigma,x)\,e_{\sigma(1)}\cdot\dots\cdot e_{\sigma(n)}\;.$$

\item If $\alpha:\{1,\dots,n\}\twoheadrightarrow \{1,\dots,d\}$ and $\beta:\{1,\dots,m\}\twoheadrightarrow \{1,\dots,e\}$ are surjective maps we denote by $\alpha|\beta$ their concatenation:
$$\begin{array}{cccl}
\alpha|\beta:& \{1,\dots,m+n\} &\twoheadrightarrow & \{1,\dots,d+e\}\\
& i &\mapsto & \left\{\begin{array}{l}\alpha(i)\text{ if $i\le n$,}\\\beta(i-n)+d\text{ if $i> n$.}\end{array}\right.
\end{array}$$
Given two Eilenberg-Zilber maps $f_i:\C\diag^{\alpha_i}(X)\to \C\diag^{\beta_i}(X)$ for $i=1,2$, we may concatenate them to get another Eilenberg-Zilber map:
$$ f_1|f_2: \C\diag^{\alpha_1|\alpha_2}(X)\to \C\diag^{\beta_1|\beta_2}(X)\;.$$

\item If $\gamma:\{1,\dots,m\}\twoheadrightarrow \{1,\dots,n\}$ is surjective and if $f:\C\diag^{\alpha}(X)\to \C\diag^{\beta}(X)$ is an Eilenberg-Zilber map natural with respect to $n$-simplicial objects, we can restrict it to $n$-simplicial objects of the form $\diag^\gamma Y$ to get an Eilenberg-Zilber map
$$f:\C\diag^{\alpha\circ \gamma}(Y)\to \C\diag^{\beta\circ\gamma}(Y)\;. $$
\end{enumerate}
\end{example}

The following result is well-known \cite[Bemerkung 2.16]{DP2}, and it is proved by the methods of acyclic models.

\begin{proposition}[Generalized Eilenberg-Zilber theorem]\label{prop-general-EZ} Let $X$ be an $n$-simplicial object in the additive category $\A$, and let  $\alpha:\{1,\dots,n\}\twoheadrightarrow \{1,\dots,d\}$ and $\beta:\{1,\dots,n\}\twoheadrightarrow \{1,\dots,e\}$ be surjective maps.  
There exists an Eilenberg-Zilber map:
$$\C\diag^\alpha(X)\to \C\diag^\beta(X),$$
and two such Eilenberg-Zilber maps are chain homotopic, via a homotopy which is natural with respect to $X$. 
\end{proposition}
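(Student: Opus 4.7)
The plan is to apply the method of acyclic models in the form originally devised by Eilenberg and Mac Lane. As models I will take the representable $n$-simplicial $R$-modules $M_{\vec m}$, indexed by multi-indices $\vec m=(m_1,\dots,m_n)\in\mathbb{N}^n$: in multi-degree $\vec\ell$ the object $M_{\vec m}(\vec\ell)$ is the free $R$-module on the product set $\prod_{i=1}^n \hom_\Delta([\ell_i],[m_i])$, with face and degeneracy operators induced from the simplex category. Since $\A$ contains $R$ and is closed under finite direct sums, each $M_{\vec m}(\vec\ell)$ lies in $\A$, so every $M_{\vec m}$ is a genuine $n$-simplicial object in $\A$. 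The Yoneda lemma provides a natural isomorphism $X_{\vec\ell}\simeq \hom(M_{\vec\ell},X)$ for every $n$-simplicial object $X$ in $\A$.

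Two hypotheses of the acyclic-models machinery must be verified. For \emph{freeness of the source functor}, observe that in each degree $q$ the functor $X\mapsto (\C\diag^\alpha X)_q$ is a finite direct sum of evaluation functors $X\mapsto X_{\vec\ell}$, each represented by a model through the identification above. For \emph{acyclicity of the target functor}, I must show that for every $\vec m$ the complex $\C\diag^\beta M_{\vec m}$ has homology equal to $R$ concentrated in degree zero. I plan to establish this by writing down an explicit natural contracting homotopy on the simplicial $R$-module $\diag^\beta M_{\vec m}$: the product $\prod_i \Delta[m_i]$ contracts to its $\vec 0$-vertex through the extra degeneracies that collapse every simplex to the initial one, and this contraction is compatible with taking any partial diagonal. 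I expect this acyclicity check to be the main technical obstacle, since the contracting homotopy must be described coherently with an arbitrary surjection $\beta$ and one has to verify that it descends to the normalized chain complex without any circular appeal to the classical Eilenberg-Zilber theorem.

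With both hypotheses in hand, the construction of an Eilenberg-Zilber map proceeds by induction on the homological degree. In degree $0$ both complexes $\C\diag^\alpha X$ and $\C\diag^\beta X$ coincide with the evaluation $X_{\vec 0}$, and the Eilenberg-Zilber map is prescribed to be the identity there. At each inductive stage the obstruction to lifting to the next degree lies in the positive-degree homology of $\C\diag^\beta M_{\vec m}$, which vanishes by acyclicity; freeness of the source then permits one to define the lift first on the models and extend it to arbitrary $X$ by naturality. Uniqueness up to natural homotopy follows by the same argument applied to the difference $f-g$ of two Eilenberg-Zilber maps: this difference vanishes in degree zero, and the identical induction produces a natural chain homotopy between $f$ and $g$, yielding the statement.
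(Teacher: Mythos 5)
The proposal is correct and takes essentially the same approach as the paper: both invoke the method of acyclic models with the same representable models $M_{\vec m}$, using freeness of the source (via the Yoneda identification $X_{\vec\ell}\simeq\hom(M_{\vec\ell},X)$) and acyclicity of the target on models to construct the map and homotopy degree by degree. Your proposal spells out the acyclicity check via the extra-degeneracy contraction of $\prod_i\Delta[m_i]$, whereas the paper delegates this to the cited lemmas in Mac Lane, but the argument is the same.
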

\begin{proof}[Sketch of proof.]
First, an $n$-simplicial object in $\A$ is a functor $(\Delta^{\mathrm{op}})^{\times n}\to \A$. Since $\A$ is a subcategory of $R\text{-mod}$ containing the free $R$-modules of finite rank, $n$-simplicial objects in $\A$ include the acyclic models $M_I:=R(\Delta[i_1]\times\dots\times\Delta[i_n])$ for all $n$-tuples $I=(i_1,\dots,i_n)$ of nonnegative integers (i.e. $\Delta[i_1]\times\dots\times\Delta[i_n]$ is the $n$-simplicial set $\prod\hom_{\Delta}(-,[i_k])$, and $M_I$ is the $n$-simplicial $R$-module obtained by taking the free $R$-module over it). Moreover, by the Yoneda lemma, homomorphisms of $n$-simplicial objects from such an acyclic model $M_I$ to an $n$-simplicial object $X$ is determined by an isomorphism: $\hom(M_I,X)\simeq X_I$. 

Hence, for all $x\in X_I$ there exists a unique map $\phi:M_I\to X$ such that $x=\phi(\iota_I)$ where $\iota_I$ is the characteristic simplex of $\Delta[i_1]\times\dots\times\Delta[i_n]$, i.e. the identity map of $[i_1]\times\dots \times [i_n]\in (\Delta^{\mathrm{op}})^{\times n}$.
So, if $t$ is an Eilenberg-Zilber map (or a homotopy between Eilenberg-Zilber maps), the commutative diagram:
$$\xymatrix{
\C\diag^\alpha(M_I)\ar[r]^-{t}\ar[d]^-{C\diag^\alpha(\phi)}& \C\diag^\beta(M_I)\ar[d]^-{C\diag^\beta(\phi)}\\
\C\diag^\alpha(X)\ar[r]^-{t}& \C\diag^\beta(X)
}$$
shows that the value of $t$ on $x\in X_I$ is completely determined by the restriction of $t$ to the acyclic model $M_I$. Hence, $t$ is completely determined by its restriction to the full subcategory of $\A$ whose objects are the acyclic models $M_I$. Conversely, if $t$ is defined on the full subcategory of $\A$ whose objects are the acyclic models $M_I$, the commutative diagram (and the uniqueness of $\phi$) show that $t$ may be extended uniquely to $\A$.

So it suffices to prove the proposition when $X$ is an acyclic model. In that case, the result is standard, and proved exactly as in the proof of \cite[VIII, lemmas 8.2 and 8.3]{ML}.
\end{proof}

\begin{corollary}
Eilenberg-Zilber maps are homotopy equivalences. 
\end{corollary}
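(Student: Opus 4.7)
The plan is to deduce the corollary directly from the generalized Eilenberg-Zilber theorem (Proposition \ref{prop-general-EZ}), using both its existence and uniqueness statements. The key observation is that the identity map on $\C\diag^\alpha(X)$ is itself an Eilenberg-Zilber map (from $\diag^\alpha$ to $\diag^\alpha$), so the uniqueness-up-to-natural-homotopy clause forces any composite of Eilenberg-Zilber maps that starts and ends at $\diag^\alpha$ to be homotopic to the identity.

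Concretely, let $f \colon \C\diag^\alpha(X) \to \C\diag^\beta(X)$ be an Eilenberg-Zilber map. By the existence part of Proposition \ref{prop-general-EZ}, there is an Eilenberg-Zilber map in the reverse direction, $g \colon \C\diag^\beta(X) \to \C\diag^\alpha(X)$, again natural in $X$ and equal to the identity in degree zero. The composite $g \circ f$ is a natural chain map $\C\diag^\alpha(X) \to \C\diag^\alpha(X)$ that equals the identity in degree zero, i.e.\ an Eilenberg-Zilber map from $\diag^\alpha$ to $\diag^\alpha$. The identity $\mathrm{id}_{\C\diag^\alpha(X)}$ is another such Eilenberg-Zilber map. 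Applying the uniqueness clause of Proposition \ref{prop-general-EZ} to the pair $(\alpha,\alpha)$, we obtain a natural chain homotopy $g \circ f \simeq \mathrm{id}$. The same argument applied to $(\beta,\beta)$ gives $f \circ g \simeq \mathrm{id}$, so $f$ is a homotopy equivalence with homotopy inverse $g$.

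There is really no obstacle here: everything has already been packaged in Proposition \ref{prop-general-EZ}. The only thing worth double-checking is that $g \circ f$ and $f \circ g$ still satisfy the two defining properties of an Eilenberg-Zilber map, namely naturality in $X$ (clear, since compositions of natural transformations are natural) and reducing to the identity in degree zero (clear, since in degree zero both $f$ and $g$ are identity maps by definition). Once these are observed, the argument is essentially a one-line invocation of the uniqueness of Eilenberg-Zilber maps up to natural homotopy.
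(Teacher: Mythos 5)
Your proof is correct, and it is the standard argument the paper leaves implicit by stating the corollary without proof immediately after Proposition~\ref{prop-general-EZ}: use the existence clause to produce a map $g$ in the reverse direction, observe that $g\circ f$ and $f\circ g$ are again Eilenberg-Zilber maps (natural, identity in degree zero), and invoke the uniqueness-up-to-natural-homotopy clause against the identity map.
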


\subsubsection{Proof of theorem \ref{thm-iter-strong}}
The proof consists of several steps.

\medskip

{\bf Step 1. A replacement for $L(C;n+m)$.} We denote by $K(n)\boxtimes K(m)$ the bisimplicial $\k$-module obtained as tensor product of $K(n)$ and $K(m)$, and by $K(n,m)$ its diagonal. We consider the lax monoidal functor
$$(\L(-;n+m),\oshuffle,\phi):(\Ch^-(\P_\k),\otimes,\k)\to (\Ch^-(\P_\k),\otimes,\k)$$
where $\L(C;n+m)=\MC \left( C_{K(n,m)}\right)$, and $\oshuffle$ is the shuffle map for mixed complexes.

By the Eilenberg-Zilber theorem and the K\"unneth theorem, the homology of $\C K(n,m)$ equals $\k$ concentrated in homological degree $n+m$. Hence, the Dold-Kan correspondence yields a homotopy equivalence of simplicial $\k$-modules $K(n,m)\simeq K(n+m)$. This homotopy equivalence induces a homotopy equivalence of chain complexes:
$$\L(C;n+m)=\MC\left( C_{K(n,m)}\right)\to \MC\left( C_{K(n+m)}\right)=L(C;n+m)\;,$$
natural with respect to $C$, commuting with suspension and with $\oshuffle$.
Hence we can replace $L(C;n+m)$ by $\L(C;n+m)$ in the proof.

\medskip

{\bf Step 2. Construction of $\zeta$.} 
Let $C$ be a chain complex. We first observe that we have equalities of complexes:
$$L(L(C;n);m)= \Tot\left(\Tot^{(1+2,3)}C''\right) = \Tot C'' = \Tot\left(\Tot^{(1,2+3)} C''\right)\;,$$
In these equalities, $C''$ denotes the tricomplex $(C_i)_{K(n)_j\otimes K(m)_k}$, with first differential induced by the differential of $C$, and with second (resp. third) differential induced by the simplicial structure of $K(n)$ (resp. $K(m)$), and $\Tot^{(1,2+3)}C''$ is the bicomplex obtained by totalizing the second and third differential of $C''$. 

For each $i$, the simplicial object $(C_i)_{K(n,m)}$ is the diagonal of the bisimplicial object $(C_i)_{K(n)\otimes K(m)}$. So, proposition \ref{prop-general-EZ} yields an Eilenberg-Zilber map:
$$f: \C \left((C_i)_{K(n)\otimes K(m)}\right)\to \C \left( (C_i)_{K(n,m)}\right)\;.$$
natural with respect to $C_i$, hence a morphism of bicomplexes, natural with respect to $C$:
$$f:\Tot^{(1,2+3)} C''\to C'\;.$$
We define $\zeta$ as the morphism of chain complexes  (natural with respect to $C$) induced at the level of total complexes:
$$\zeta: L(L(C;n);m)= \Tot\Tot^{(1,2+3)} C''\to \Tot C'=\L(C;m+n)\;.$$

We readily check from the definition of $\zeta$ that condition (2) is satisfied. So it remains to check conditions (1) and (3).

\medskip

{\bf Step 3. Reduction to nonnegative complexes.} Since condition (2) is satisfied, we see that $\zeta$ is a homotopy equivalence for $C[1]$ if and only if it is a homotopy equivalence for $C$. Hence, it suffices to prove condition (1) with $C\in\Ch_{\ge 0}(\P_{d,\k})$. Similarly the diagram involved in condition (3) commutes up to homotopy for given $C$ and $D$ if and only if it commutes up to homotopy for their suspensions $C[2i]$ and $D[2j]$ (taking even suspensions enables us to avoid checking signs). Hence it suffices to prove condition (3) with $C\in\Ch_{\ge 0}(\P_{d,\k})$ and $D\in \Ch_{\ge 0}(\P_{e,\k})$.

\medskip

{\bf Step 4. Reduction to the simplicial case.} For $C\in\Ch_{\ge 0}(\P_{d,\k})$, the Dold-Kan correspondence
$$\N:s(\widetilde{\P}_{d,\k})\leftrightarrows \Ch^-(\widetilde{\P}_{d,\k}):\K$$
ensures that $\C\K(C)$ is homotopy equivalent to $C$. Moreover the explicit formula for $\K$ \cite[Section 8.4]{Weibel} ensures that $\C\K(C)$ takes finitely generated projective values, i.e. is an element of $\Ch_{\ge 0}(\P_{d,\k})$. Since derivation of functors preserves homotopy equivalences, conditions (1) and (3) hold for $C$ (and $D$) if and only if they hold for the complexes $\C\K(C)$ (and $\C\K(D)$). Therefore, to prove conditions (1) and (3), it suffices to treat the case where there are $X\in s(\P_{d,\k})$ and $Y\in s(\P_{e,\k})$ such that $C=\C X$ and $D=\C Y$.

\medskip 

{\bf Step 5. Proof of condition (1): simplicial case.} If $C=\C X$ with $X\in s(\P_{d,\k})$, then $\zeta:L(L(C;n,m))\to \L(C,n+m)$ is the evaluation of the Eilenberg-Zilber map $\Id|f: \C Z \to \C \diag^{(1,2+3)}Z$ on the $3$-simplicial object $(X_i)_{K(n)_j\otimes K(m)_k}$. Hence it is a homotopy equivalence.

\medskip

{\bf Step 6. Proof of condition (3): simplicial case.} We assume that $C=\C X$ and $D=\C Y$ with 
$X\in s(\P_{d,\k})$ and $Y\in s(\P_{e,\k})$. In that case, we may rewrite the various morphisms in the diagram of condition (3), hence their compositions, as Eilenberg-Zilber maps, so commutation up to homotopy follows from proposition \ref{prop-general-EZ}.
To be more specific, let us denote by $Z$ the $6$-simplicial object 
$$Z_{i,j,k,\ell,r,s}=(X_i)_{K(n)_j\otimes K(m)_k}\otimes (Y_\ell)_{K(n)_r\otimes K(m)_s}\;.$$
Then $\zeta\otimes\zeta$ is the Eilenberg-Zilber map
$$\C Z\xrightarrow[]{\Id|f|\Id|f}\C\diag^{(1,2+3,4,5+6)}Z\;,$$
the map $\oshuffle$ on the right hand side is the Eilenberg-Zilber map given as the composite (where $\sigma\in\Si_4$ exchanges $2$ and $3$), 
$$\C\diag^{(1,2+3,4,5+6)}Z\xrightarrow[]{\sigma} \C\diag^{(1,4,2+3,5+6)}Z\xrightarrow[]{\Id|\Id|\nabla}\C\diag^{(1,4,2+3+5+6)}Z\;,$$
the map $\oshuffle$ on the left hand side is the composite (where $\sigma\in\Si_6$ satisfies $\sigma(3)=4$, $\sigma(4)=5$ and $\sigma(5)=3$):
$$\C Z\xrightarrow[]{\sigma} \C\diag^{(1,2,4,5,3,6)}Z\xrightarrow[]{\Id|\Id|\Id|\Id|\nabla}\C\diag^{(1,2,4,5,3+6)}Z\;,$$
the map $L(\oshuffle;m)$ is the composite (where $\sigma\in \Si_5$ exchanges $2$ and $3$)
$$\C\diag^{(1,2,4,5,3+6)}Z \xrightarrow[]{\sigma} \C\diag^{(1,4,2,5,3+6)}Z\xrightarrow[]{\Id|\Id|\nabla|\Id}\C\diag^{(1,4,2+5,3+6)}Z\;,$$
and the map $\zeta$ is the Eilenberg-Zilber map
$$\C\diag^{(1,4,2+5,3+6)}Z\xrightarrow[]{\Id|\Id|f}\C\diag^{(1,4,2+5+3+6)}Z\;.$$
This finishes the proof of theorem \ref{thm-iter-strong}.

\section{Main theorem}\label{section-main}

\subsection{The functor $\Sigma$}
For all $d\ge 0$ and all $C\in \Ch^b(\P_{d,\k})$, we denote by $\Sigma C$ the shifted complex $C[-d]$ (recall the convention $C[-d]^i=C^{i+d}$). Gathering all $d$ together, we get a functor: 
$$\Sigma:\Ch^b(\P_{\k})\to \Ch^b(\P_{\k})$$ 
For $C\in \DD^b(\P_{d,\k})$ and $D\in \DD^b(\P_{e,\k})$, we denote by $\xi_{d,e}$ the isomorphism of complexes (the sign ensures that $\xi_{d,e}$ commutes with the differentials)
$$\begin{array}{cccc}
\xi_{d,e}:& (C[-d])_i\otimes (D[-e])_j & \to & (C\otimes D)[-d-e]_{i+j}\\
& x\otimes y &\mapsto & (-1)^{e(i+d)} x\otimes y 
\end{array}
$$ 
Gathering all indices $d,e$ together, we get a natural isomorphism:
$$\xi: \Sigma(C)\otimes \Sigma(D)\xrightarrow[]{\,\simeq\,}\Sigma(C\otimes D)\;.$$
The following statement is straightforward from the definitions.
\begin{proposition}
The triple $(\Sigma,\xi,\Id)$ is a monoidal endofunctor of the symmetric monoidal category $(\Ch^b(\P_{\k}),\otimes,\k)$. It induces a triangle monoidal endofunctor, still denoted by $(\Sigma,\xi,\Id)$, of the symmetric monoidal category $(\DD^b(\P_{\k}),\otimes,\k)$
\end{proposition}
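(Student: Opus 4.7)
The plan is to verify three things in turn: (i) that $\xi_{d,e}$ is a chain isomorphism natural in $C,D$, (ii) that the triple $(\Sigma,\xi,\Id)$ satisfies the unit and associativity coherences of a (non-symmetric) monoidal functor on $\Ch^b(\P_\k)$, and (iii) that everything descends to $\DD^b(\P_\k)$ as a triangle monoidal endofunctor.

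For (i), I would take $x\in (\Sigma C)_i$ and $y\in (\Sigma D)_j$ and expand both $\xi_{d,e}\circ d_{\Sigma C\otimes\Sigma D}(x\otimes y)$ and $d_{\Sigma(C\otimes D)}\circ \xi_{d,e}(x\otimes y)$ using $d_{\Sigma C}=(-1)^d d_C$, $d_{\Sigma D}=(-1)^e d_D$, the Koszul convention, and the fact that an element of $(\Sigma C)_i$ sits in $C$-degree $i-d$. The verification comes down to the identity
\[ e(i-1+d)+d \equiv e(i+d)+d+e \pmod 2\,,\qquad i+e(i+d)+e \equiv e(i+d)+e+i \pmod 2 \,,\]
both of which are obvious. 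Invertibility of $\xi_{d,e}$ is clear since it is multiplication by a sign, and naturality is automatic because morphisms in $\Ch^b(\P_{d,\k})$ preserve degrees.

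For (ii), the unit diagrams are trivial: $\k\in\Ch^b(\P_{0,\k})$ so $\Sigma\k=\k$, and since the sign $(-1)^{e(i+d)}$ with $d=0,i=0$ is $+1$, and $\xi_{d,0}$ has trivial sign, the unit $\Id\colon\Sigma\k\to\k$ makes the left/right unit constraints commute on the nose. Associativity reduces to the identity
\[ e(i+d)+f(i+j+d+e) \;\equiv\; f(j+e)+(e+f)(i+d) \pmod 2 \,,\]
on an element $x\otimes y\otimes z$ of tri-degree $(i,j,k)$; both sides expand to $ei+ed+fi+fj+fd+fe$, so the pentagon commutes.

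For (iii), the shift $C\mapsto C[-d]$ is an exact triangle equivalence on each bounded derived category, so using the decomposition $\DD^b(\P_\k)=\bigoplus_{d\ge 0}\DD^b(\P_{d,\k})$ the functor $\Sigma$ is exact, preserves quasi-isomorphisms and commutes with suspensions and mapping cones. Hence it induces a triangle endofunctor of $\DD^b(\P_\k)$. The natural isomorphism $\xi$ is already a chain-level isomorphism, so it descends immediately, and the monoidal axioms verified in (ii) pass to the derived category without change. The only delicate point is the Koszul sign bookkeeping in (i) and (ii); since the sign $(-1)^{e(i+d)}$ in the definition of $\xi_{d,e}$ has been tuned precisely for this, no real obstacle arises.
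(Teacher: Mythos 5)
Your verification is correct and is exactly the routine sign-bookkeeping that the paper dismisses with ``straightforward from the definitions''; all three sign identities check out, and the descent to $\DD^b(\P_\k)$ is handled correctly via exactness of the shift. One cosmetic nit: the coherence you verify in step~(ii) is usually called the hexagon (or, in the strict case, the square) axiom for a monoidal functor rather than the ``pentagon,'' which refers to the associator coherence of the monoidal category itself.
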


\begin{remark}\label{rk-sgn-Sigma}
The monoidal functor $(\Sigma,\xi,\Id)$ is not \emph{symmetric}. Indeed, if $C\in \Ch^b(\P_{d,\k})$ and $D\in \Ch^b(\P_{e,\k})$, the following diagram commutes up to a $(-1)^{de}$ sign:
$$\xymatrix{
(\Sigma C)\otimes (\Sigma D)\ar[r]^-{\xi}\ar[d]^-{\tau} & \Sigma(C\otimes D)\ar[d]^-{\Sigma(\tau)}\\
(\Sigma D)\otimes (\Sigma C)\ar[r]^-{\xi}& \Sigma(D\otimes C)\;.
}$$
\end{remark}

The functor $(\Sigma,\xi,\Id)$ is actually a monoidal \emph{automorphism} of the category of the symmetric monoidal category $(\Ch^b(\P_{\k}),\otimes,\k)$. That is, there is a monoidal functor $(\Sigma^{-1},\xi^{-1},\Id)$ such that the composite of these two monoidal functors equals the monoidal functor $(\Id,\Id,\Id)$. To be more specific, the monoidal inverse sends $C\in\Ch^b(\P_{d,\k})$ to $\Sigma^{-1}(C)= C[d]$, and the restriction of the map $\xi^{-1}$ to complexes of homogeneous functors of degrees $C$ and $D$ is the map
$$\begin{array}{cccc}
(\xi^{-1})_{d,e}: &C[d]_i\otimes D[e]_j&\to& (C\otimes D)[d+e]_{i+j}\;. \\
& x\otimes y & \mapsto &(-1)^{ei} x\otimes y
\end{array}$$
Observe that the sign appearing in the definition of $(\xi^{-1})_{d,e}$ is different from the sign appearing in the definition of $\xi_{d,e}$.
\begin{remark}
Other signs would be possible in the definition of $\xi_{d,e}$ (hence in the definition of $\xi^{-1}_{d,e}$). Our choice of signs is the good one to make theorem \ref{thm-main} work.
\end{remark}

Since $\Sigma$ is defined from the suspension functors, and since triangle functors commute with suspension, the following result holds.
\begin{lemma}\label{lm-commut}
Let $F:\DD^b(\P_\k)\to \DD^b(\P_\k)$ be a triangle functor. The natural transformation $F(C [-1])\simeq F(C)[-1]$ induces an isomorphism of triangle functors:
$$\Sigma\circ F\simeq F\circ \Sigma\;.$$ 
Moreover, if $F=\Theta$, the isomorphism above is also an isomorphism of monoidal functors.
\end{lemma}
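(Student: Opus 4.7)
The plan is to reduce everything to the identification that on the summand $\DD^b(\P_{d,\k})$, the functor $\Sigma$ is nothing but the suspension $[-d]$, and then to exploit the fact that a triangle functor commutes with suspension by definition. Concretely, I denote by $\sigma_C : F(C[-1]) \xrightarrow{\simeq} F(C)[-1]$ the suspension isomorphism which is part of the data of $F$ being a triangle functor. Iterating $\sigma$ exactly $d$ times produces, for $C \in \DD^b(\P_{d,\k})$, a natural isomorphism $\sigma^d_C : F(C[-d]) \xrightarrow{\simeq} F(C)[-d]$, that is $F(\Sigma C) \xrightarrow{\simeq} \Sigma F(C)$. Gathering over $d$ via the decomposition $\DD^b(\P_\k) = \bigoplus_{d\ge 0} \DD^b(\P_{d,\k})$ yields the desired natural isomorphism $F \circ \Sigma \simeq \Sigma \circ F$ on $\DD^b(\P_\k)$.

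To promote this to an isomorphism of triangle functors, I observe that the suspension isomorphism $\sigma$ is by definition compatible with distinguished triangles, and the same compatibility is inherited by each power $\sigma^d$. Since the decomposition of $\DD^b(\P_\k)$ into its homogeneous summands is orthogonal under the triangle structure, the compatibility on each summand assembles into a triangle-functor isomorphism on the whole category.

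For the monoidal claim with $F = \Theta$, I have to check that for $C \in \DD^b(\P_{d,\k})$ and $D \in \DD^b(\P_{e,\k})$ the rectangle
$$\xymatrix@C=0.9cm{
\Sigma\Theta(C)\otimes\Sigma\Theta(D)\ar[r]^-{\xi}\ar[d]_-{\sigma^d\otimes\sigma^e} & \Sigma(\Theta C\otimes\Theta D)\ar[r]^-{\Sigma(\square)} & \Sigma\Theta(C\otimes D)\ar[d]^-{\sigma^{d+e}} \\
\Theta\Sigma(C)\otimes\Theta\Sigma(D)\ar[r]^-{\square} & \Theta(\Sigma C\otimes\Sigma D)\ar[r]^-{\Theta(\xi)} & \Theta\Sigma(C\otimes D)
}$$
commutes. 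Naturality of $\square$ in each variable and the fact that $\Theta$ is a triangle functor reduce this to a sign verification on a pair of homogeneous elements $x \in \Theta(C)^i$, $y \in \Theta(D)^j$. The sign $(-1)^{e(i+d)}$ appearing in $\xi_{d,e}$ is precisely the Koszul sign produced when the suspension $[-e]$ is slid past a tensor factor of total homological degree $i+d$; with this definition in place, the two compositions in the rectangle agree on the nose.

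The only real obstacle is this sign bookkeeping in the last step. The formal part of the argument (iterating $\sigma$, gathering over $d$, inheriting the triangle structure) is essentially automatic from $F$ being a triangle functor; conversely, the monoidal assertion is sensitive to the precise sign convention built into $\xi_{d,e}$. The asymmetric-looking exponent $e(i+d)$ in that definition is tailored exactly to this purpose, as indicated by the remark preceding the lemma, so once the diagram above is unfolded the verification is a short Koszul-sign computation.
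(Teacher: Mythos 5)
Your argument follows the paper's approach exactly: the paper offers no proof beyond the one-line motivation preceding the lemma ("Since $\Sigma$ is defined from the suspension functors, and since triangle functors commute with suspension\dots"), and iterating the suspension isomorphism $\sigma$ degree-by-degree over the orthogonal decomposition $\DD^b(\P_\k)=\bigoplus_{d\ge 0}\DD^b(\P_{d,\k})$ is precisely the intended construction, as is your observation that compatibility with triangles is inherited from that of $\sigma$.

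Two caveats. First, as stated the lemma is not literally true for an arbitrary triangle endofunctor: you tacitly use $F(C)[-d]=\Sigma F(C)$ for $C\in\DD^b(\P_{d,\k})$, which requires $F$ to preserve the homogeneous decomposition (if $F$ carried degree $d$ to degree $e\ne d$ one would have $\Sigma F(C)=F(C)[-e]\ne F(C)[-d]\simeq F\Sigma(C)$). The paper is equally tacit, and the hypothesis holds in the only case that matters ($F=\Theta$), so this is a wording issue rather than an error, but it is worth being explicit. Second, for the monoidal assertion you set up the correct rectangle to verify, but then assert rather than perform the sign check. Since the sign bookkeeping \emph{is} the entire content of this part of the lemma — the exponent $e(i+d)$ in $\xi_{d,e}$ was chosen for exactly this purpose, as the remark just above the lemma says — it should be carried out. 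Concretely one needs to note that the suspension isomorphism of $\Theta=\RR\H(\Lambda^d,-)$ is sign-free (with the conventions of section \ref{subsubsec-convention}, because $\Lambda^d$ sits in degree $0$ in the contravariant slot of $\H$, so all the signs $(-1)^{di}$ in $\psi_d$ and $(-1)^{jk}$ in $\otens$ vanish), then compare the Koszul differentials on $\Theta(C)[-d]\otimes\Theta(D)[-e]$ and on $(\Theta C\otimes\Theta D)[-d-e]$ and check that $(-1)^{e(i+d)}$ intertwines them and makes the two composites around the rectangle agree. The heuristic you give ("sliding $[-e]$ past a factor of degree $i+d$") is the right mnemonic, but a one-line computation would make the proof complete.
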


Finally, it follows from remarks \ref{rk-sgn-Sigma} and \ref{rk-sgn-Ringel} that the functor $\Sigma\circ \Theta$ is \emph{symmetric} monoidal (although neither $\Sigma$ nor $\Theta$ is). 

\subsection{Main theorem and consequences}

\begin{theorem}\label{thm-main}
Let $\k$ be a PID, and let $n$ be a positive integer. The derivation functor 
$L(-;n):\DD^-(\P_\k)\to \DD^-(\P_\k)$
induces a (triangle, symmetric monoidal) functor 
$$L(-;n):\DD^b(\P_\k)\to \DD^b(\P_\k)\;,$$
and there is an isomorphism of functors 
$$ L(-;n)\simeq  \Sigma^n\circ\Theta^n\;.$$
Moreover, this isomorphism is an isomorphism of triangle functors as well as an isomorphism of monoidal functors.
\end{theorem}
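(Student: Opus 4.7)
The plan is to reduce to the case $n=1$ and compute on injective cogenerators, then iterate. Once a natural isomorphism $L(-;1)\simeq \Sigma\circ\Theta$ of triangle symmetric monoidal functors is established on $\DD^b(\P_\k)$, the general case follows from Corollary~\ref{cor-iter}: iterating gives $L(-;n)\simeq (\Sigma\circ\Theta)^{\circ n}$, and Lemma~\ref{lm-commut} (commutation of $\Sigma$ with any triangle functor) lets us shuffle all copies of $\Sigma$ past all copies of $\Theta$ to obtain $\Sigma^n\circ\Theta^n$. The boundedness claim $L(\DD^b)\subseteq\DD^b$ is automatic from the identification, since the right hand side manifestly lands in $\DD^b$.

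For the base case, I would work in the equivalent category $\KK^b(\Inj(\P_{d,\k}))$ and construct the natural isomorphism on the injective cogenerators $S^\lambda$. The right hand side is explicit: by Lemma~\ref{lm-acyclic}, $S^\lambda$ is $\H(\Lambda^d,-)$-acyclic, so $\Theta(S^\lambda)\simeq \H(\Lambda^d,S^\lambda)\simeq \Lambda^\lambda$ by Lemma~\ref{lm-comput}(ii), whence $\Sigma\Theta(S^\lambda)\simeq \Lambda^\lambda[-d]$ with $d=\sum\lambda_i$. The left hand side is computed via the shuffle quasi-isomorphism $\oshuffle$ of Proposition~\ref{prop-sym}, which reduces $L(S^\lambda;1)$ to a tensor product of the $L(S^{\lambda_i};1)$, hence to the single-index case $L(S^d;1)\simeq \Lambda^d[-d]$. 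The latter is the classical Dold-Puppe calculation: the simplicial object $S^d_{K(1)}$ has homotopy concentrated in degree $d$ where it equals $\Lambda^d$, and the explicit quasi-isomorphism comes from antisymmetrization of the $d$-fold shuffle at simplicial level $d$, which survives normalization. Naturality in maps between injectives follows from Lemma~\ref{lm-iso}, which shows $\H(\Lambda^d,-)$ induces a bijection $\hom_{\P_{d,\k}}(S^\mu,S^\lambda)\simeq \hom_{\P_{d,\k}}(\Lambda^\mu,\Lambda^\lambda)$; this extends the identification to all of $\KK^b(\Inj(\P_{d,\k}))$.

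The main obstacle is not any single calculation but rather the careful bookkeeping of signs needed to verify compatibility with the monoidal and triangulated structures. The shuffle $\oshuffle$ on $L(-;1)$, the map $\lax$ on $\Theta$, and the map $\xi$ on $\Sigma$ each carry Koszul signs, and one must confirm that the identification on the $S^\lambda$ respects all three. The particular signs appearing in the definition of $\xi_{d,e}$ are arranged (see Remark~\ref{rk-sgn-Sigma} and the subsequent discussion) precisely so that $\Sigma\circ\Theta$ is symmetric monoidal even though neither $\Sigma$ nor $\Theta$ is; this is what makes the match with the manifestly symmetric monoidality of $L(-;1)$ possible. Verifying the required commuting squares on $S^\lambda\otimes S^\mu$ and then propagating the resulting natural iso through both the triangulated structure and the iteration step is where the technical weight of the proof concentrates.
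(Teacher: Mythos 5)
Your reduction to $n=1$ via Corollary~\ref{cor-iter} and Lemma~\ref{lm-commut} matches the paper, and your observation that boundedness follows for free is correct. But the core of the $n=1$ case as you outline it has a real gap: you produce an identification $L(S^\lambda;1)\simeq \Lambda^\lambda[-d]$ object by object and then claim naturality "follows from Lemma~\ref{lm-iso}." That lemma only says $\H(\Lambda^d,-)$ induces a bijection $\hom(S^\mu,S^\lambda)\simeq \hom(\Lambda^\mu,\Lambda^\lambda)$ --- it is a statement about $\Theta$ alone and says nothing about how $L(-;1)$ acts on morphisms. To conclude naturality you would need to \emph{independently} compute $L(f;1)$ for each $f\in \hom_{\P_{d,\k}}(S^\mu,S^\lambda)$ and check that, under your objectwise identifications, it agrees with $\H(\Lambda^d,f)$; the bijectivity of $\Theta$'s action gives no information about this. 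The same problem recurs at the level of the monoidal and triangulated compatibilities, which you correctly flag as delicate but for which the objectwise strategy offers no systematic control.

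The ingredient you are missing is the parameterized Yoneda isomorphism $\H(F^\sharp, S^d_U)\simeq F_U$ from Section~\ref{subsubsec-internal}. Substituting the simplicial module $U=K(1)$ turns this into a natural isomorphism of mixed complexes $\H(C^\sharp, S^d_{K(1)})\simeq C_{K(1)}$, hence $\H(C^\sharp,\C S^d_{K(1)})\simeq \MC\,C_{K(1)}=L(C;1)$ \emph{functorially in $C$}. This is why the paper introduces the intermediate functor $L'(C)=\H(C^\sharp,\C S^d_{K(1)})$: naturality in $C$, compatibility with suspension, and compatibility with the shuffle product are all built into $L'$ from the start rather than needing to be checked on a generating family. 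The remaining step $\Sigma\circ\wtheta\simeq L'$ is then a sign bookkeeping exercise using the compatibility of $\H$ with suspension, and the quasi-isomorphism $\phi_d:\Lambda^d\to(\C S^d_{K(1)})[d]$ (which is the precise version of the "classical Dold--Puppe calculation" you allude to) is produced via the universal property of exterior algebras, which automatically handles the multiplicative coherence. Your outline identifies the right target and the right difficulties, but without the parameterized internal Hom bridge the naturality step does not go through.
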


Let us make clear what the first part of the statement means. 
The canonical monoidal functor (induced by the inclusion)
$$\DD^b(\P_\k)\to \DD^-(\P_\k) $$  
is fully faithful, and induces an equivalence of monoidal categories between $\DD^b(\P_\k)$ and the full subcategory $(\DD^b)'$ of $\DD^-(\P_\k)$ whose objects are complexes satisfying $H^n(C)=0$ for $n\ll 0$ (see e.g. \cite[Lemma 11.7]{Keller} for the equivalence of categories, the fact that all the functors involved are monoidal is  a straightforward verification). The first part of theorem \ref{thm-main} means that there exist a triangle monoidal functor (the dashed arrow) making the following diagram commute:
$$\xymatrix{
\DD^-(\P_\k)\ar[r]^-{L(-;n)}&\DD^-(\P_\k)\\
(\DD^b)'\ar@{^{(}->}[u]\ar@{-->}[r]& (\DD^b)'\ar@{^{(}->}[u]
}$$
and we still denote by $L(-;n)$ the composite
$$\DD^b(\P_\k)\simeq  (\DD^b)'\to (\DD^b)'\simeq \DD^b(\P_\k)\;. $$
\begin{remark}
The first part of theorem \ref{thm-main} is actually equivalent to saying that strict polynomial functors have bounded derivatives. So it may be seen as a strict polynomial version of \cite[Satz 4.22]{DP2}. 
\end{remark}

Now let us spell out some consequences of theorem \ref{thm-main}. Restricting to functors and taking homology, we obtain the statement alluded to in the introduction.
\begin{corollary}\label{cor-1}
Let $F\in\P_{d,\k}$ and let $V\in\V_\k$ be a free $\k$-module of finite rank. There is an isomorphism, natural with respect to $V$ and $F$: 
$$H^{i}(\Theta^n F)(V)\simeq L_{nd-i}F(V;n)\;.$$
Moreover, if $F\in\P_{d,\k}$ and $G\in\P_{e,\k}$, these isomorphisms fit into a diagram which commutes up to a $(-1)^{nei}$ sign (and where $\kappa$ denotes the usual K\"unneth morphism \cite[V (10.1)]{ML}):
$$
\xymatrix{
H^{i}(\Theta^n F)(V)\otimes H^j(\Theta^n G)(V)\ar[d]^-{\kappa}\ar[rr]^-{\simeq} && L_{nd-i}F(V;n)\otimes L_{ne-j}G(V;n)\ar[dd]^-{\nabla}\\
H^{i+j}(\Theta^n F\otimes \Theta^n G)(V)\ar[d]^-{H^{i+j}(\lax^n)}&&\\
H^{i+j}(\Theta^n(F\otimes G))(V)\ar[rr]^-{\simeq}&&  L_{nd+ne-i-j}(F\otimes G)(V;n)
}.
$$
\end{corollary}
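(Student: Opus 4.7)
The plan is to unpack Theorem \ref{thm-main} at the level of $V$-points and cohomology. For the first isomorphism, view $F\in\P_{d,\k}$ as a complex concentrated in cohomological degree $0$. Then Theorem \ref{thm-main} provides a quasi-isomorphism $L(F;n)\simeq \Sigma^n\Theta^n F = (\Theta^n F)[-nd]$ in $\DD^b(\P_{d,\k})$. Evaluating at $V$ and using the convention $C^i=C_{-i}$, which identifies $L_qF(V;n)=H_q(\C F_{K(n)})(V)$ with $H^{-q}(L(F;n))(V)$, one obtains
$$L_qF(V;n)\simeq H^{-q}((\Theta^n F)[-nd])(V)=H^{nd-q}(\Theta^n F)(V).$$
Setting $q=nd-i$ yields the first isomorphism of the corollary.

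For the compatibility diagram, the decisive point is that the isomorphism of Theorem \ref{thm-main} is one of \emph{monoidal} functors. Instantiated at $(F,G)\in\P_{d,\k}\times\P_{e,\k}$ it produces a square in $\DD^b(\P_{d+e,\k})$ whose horizontal arrows are the respective monoidal structure maps: $\oshuffle$ on top and $\Sigma^n(\lax^n)\circ\xi^n$ on the bottom. I would evaluate this square at $V$ and pass to cohomology in the total degree $i+j-nd-ne$. On the K\"unneth summand $H^i(\Theta^n F)(V)\otimes H^j(\Theta^n G)(V)$ appearing at the bottom-left (and the corresponding summand $L_{nd-i}F(V;n)\otimes L_{ne-j}G(V;n)$ appearing at the top-left, related to it via the first-part isomorphism), the top arrow $\oshuffle$ restricts to the shuffle map $\nabla$ of Section \ref{subsec-rappel-DP}, while the bottom composite restricts to $H^{i+j}(\lax^n)\circ\kappa$ multiplied by the sign contributed by $\xi^n$ on this K\"unneth piece.

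The last task is to identify this sign as $(-1)^{nei}$. Applying the defining formula of $\xi$ with shift parameters $nd$ and $ne$, i.e. $x\otimes y\mapsto (-1)^{ne(i+nd)}x\otimes y$ on $(C[-nd])_i\otimes(D[-ne])_j$, and translating between homological and cohomological gradings on the K\"unneth summand, produces exactly the factor $(-1)^{nei}$. The main obstacle is precisely this sign-bookkeeping: Koszul signs from tensoring shifted complexes interact subtly with the K\"unneth morphism, and one must be careful to use the monoidal structure on $\Sigma^n$ consistent with the one implicit in Theorem \ref{thm-main}; alternative orderings (e.g. the $n$-fold iterate of $\xi$) can produce additional spurious factors if not normalized correctly.
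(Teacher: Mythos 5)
Your approach is essentially the same as the paper's (which simply states that the corollary follows from Theorem \ref{thm-main} by restricting to functors and taking homology). The first isomorphism is derived correctly: view $F$ as a complex in degree $0$, apply $L(F;n)\simeq\Sigma^n\Theta^n F=(\Theta^n F)[-nd]$ and convert homological to cohomological grading. The strategy for the diagram is also right: monoidality of the isomorphism in Theorem \ref{thm-main} gives a commuting square at the chain level, and evaluating on homology with the K\"unneth map yields the stated diagram up to a sign coming from the monoidal structure map of $\Sigma^n$.

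There is, however, one point in the sign bookkeeping that deserves scrutiny, and it is precisely the point you flag at the end. You compute the sign by instantiating the formula for $\xi$ directly with shift parameters $nd$, $ne$, obtaining $(-1)^{ne(2nd-i)}=(-1)^{nei}$. But in the proof of Theorem \ref{thm-main}, $\Sigma^n$ appears as the $n$-fold \emph{composite} $\Sigma\circ\cdots\circ\Sigma$, and its monoidal constraint is the $n$-fold iterate $\xi^{(n)}$ of $\xi$ (this is what makes the comparison $\Sigma^n\Theta^n\simeq(\Sigma\Theta)^n\simeq L(-;1)^n\simeq L(-;n)$ possible via Lemma \ref{lm-commut} and Corollary \ref{cor-iter}). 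A direct unwinding of the recursion $\xi^{(n)}=\Sigma(\xi^{(n-1)})\circ\xi_{\Sigma^{n-1}C,\Sigma^{n-1}D}$ shows that, on the K\"unneth summand where $x$ has cohomological degree $i$, the total sign equals $(-1)^{e\sum_{k=1}^{n}((k+1)d-i)}=(-1)^{ed\cdot n(n+3)/2}\cdot(-1)^{nei}$, which agrees with $(-1)^{nei}$ only for $n\equiv 0,1\pmod 4$ and carries an extra factor $(-1)^{ed}$ when $n\equiv 2,3\pmod 4$. So the shortcut $\xi_{nd,ne}$ and the iterate $\xi^{(n)}$ are genuinely different monoidal constraints on the functor $C\mapsto C[-nd]$, and one cannot simply substitute one for the other without justification. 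Your remark that ``the $n$-fold iterate of $\xi$ can produce additional spurious factors if not normalized correctly'' identifies the issue but leaves the reader without a reason to prefer the direct formula over the iterate that the proof of Theorem \ref{thm-main} actually uses (nor an accounting of potential compensating signs in the isomorphism $L(-;n)\simeq L(-;1)^n$ or in $\psi_d$ from the proof of Proposition \ref{prop-main}). Since the paper itself only sketches this step, you should either justify that the iterated and direct monoidal constraints coincide in the relevant comparison, or carry the iterate's sign through the whole chain of identifications and show that the net sign is indeed $(-1)^{nei}$.
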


As it has been said in the introduction, corollary \ref{cor-1} has concrete interpretations in terms of extension groups. If $F,G\in\P_{d,\k}$ and if $V$ is a finitely generated projective $\k$-module $V$, we denote by $\E(F,G)(V)$ the parameterized extension groups:
$$\E^*(F,G)(V):= \Ext^*_{\P_{d,\k}}(F^V,G)\;.$$
Thus, $\E(F,G)$ is a functor from $\Gamma^d\V_\k$ to graded $\k$-modules and $\E^*(F,G)(\k)$ equals $\Ext^*_{\P_{d,\k}}(F,G)$. By definition, the homology of $\Theta F$ equals $\E^*(\Lambda^d,F)$. Moreover, after taking homology, the morphism $\lax$ is nothing but the usual convolution product of extensions (used e.g. in \cite{FFSS, Chalupnik2, TouzeClassical, TouzeEML}):
$$\E^i(\Lambda^{d},F)\otimes\E^j(\Lambda^{e},G)\to \E^{i+j}(\Lambda^{d+e},F\otimes G)\;.$$
Thus, corollary \ref{cor-1} may be reinterpreted in the following way.

\begin{corollary}
Let $\k$ be a PID, let $F\in\P_{d,\k}$ and let $V\in\V_\k$. There are isomorphisms (natural in $F,V$):
$$\E^i(\Lambda^d,F)(V)\simeq L_{d-i}F(V;1)\;.$$
Moreover, for $F\in\P_{d,\k}$ and $G\in\P_{e,\k}$, the pairing 
$$L_{d-i}F(V;1)\otimes L_{e-j}G(V;1)\to L_{d+e-i-j}(F\otimes G)(V;1)$$
identifies through this isomorphism, up to a $(-1)^{ie}$ sign, with the pairing
$$\E^i(\Lambda^{d},F)(V)\otimes\E^j(\Lambda^{e},G)(V)\to \E^{i+j}(\Lambda^{d+e},F\otimes G)(V)\;.$$
\end{corollary}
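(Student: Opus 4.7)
The plan is to derive this corollary directly as the $n=1$ specialization of Corollary~\ref{cor-1}, once we have translated the language of homology of Ringel duals into the language of parameterized extension groups. So the work consists of (a) checking that $H^i(\Theta F)(V)$ agrees with $\E^i(\Lambda^d,F)(V)$ as a functor of $V$ and $F$, and (b) checking that the homology of $\lax$ evaluated at $V$ coincides with the usual convolution (Yoneda-type) product of extensions. Both are essentially bookkeeping, but the sign in (b) must be compared carefully.

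For (a), I would first choose an injective resolution $F\to I^\bullet$ in $\P_{d,\k}$; since injective functors are $\H(\Lambda^d,-)$-acyclic by Lemma~\ref{lm-acyclic}, there is a quasi-isomorphism $\Theta F\simeq \H(\Lambda^d,I^\bullet)$ in $\Ch^b(\P_{d,\k})$. Evaluating at $V\in\V_\k$ and using the definition of $\H$ gives the complex of $\k$-modules
\[
\H(\Lambda^d,I^\bullet)(V)\;=\;\hom_{\P_{d,\k}}\!\bigl((\Lambda^d)^V,\,I^\bullet\bigr),
\]
whose cohomology is, by the standard interpretation of $\Ext$ via injective resolutions, $\Ext^i_{\P_{d,\k}}((\Lambda^d)^V,F)=\E^i(\Lambda^d,F)(V)$. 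Combining with Corollary~\ref{cor-1} at $n=1$ yields the first isomorphism of the statement, natural in $F$ and $V$.

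For (b), I would analyze the morphism $\lax$ of Proposition~\ref{prop-square} on injective resolutions $F\to I^\bullet$ and $G\to J^\bullet$: it is the tensor product of homomorphisms $\hom(\Lambda^{d,V},I^\bullet)\otimes \hom(\Lambda^{e,V},J^\bullet)\to \hom(\Lambda^{d+e,V},I^\bullet\otimes J^\bullet)$ followed by the dual of the comultiplication $\Lambda^{d+e}\to\Lambda^d\otimes\Lambda^e$. Passing to cohomology, this is exactly the convolution pairing on parameterized Ext groups used e.g.\ in \cite{FFSS,Chalupnik2,TouzeEML}. Hence the outer square of Corollary~\ref{cor-1}, with $n=1$ and sign $(-1)^{nei}=(-1)^{ei}$, identifies the simplicial shuffle pairing on derived functors with the convolution pairing on $\E$, which is precisely what the corollary asserts.

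The only step where one has to be careful is the sign identification in (b): one must track the sign $(-1)^{nei}$ from Corollary~\ref{cor-1} and verify that no additional sign enters when rewriting $H^*(\lax)$ as the convolution product of extensions. The rest of the argument is purely a translation of notation, so I expect no further obstacle.
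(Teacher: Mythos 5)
Your proposal is correct and follows essentially the same route as the paper: the paper also treats this corollary as a notational translation of Corollary~\ref{cor-1} at $n=1$, noting that $H^*(\Theta F)=\E^*(\Lambda^d,F)$ by definition of $\RR\H(\Lambda^d,-)$ (which you spell out via an injective resolution), and that $\lax$ in homology is the convolution product on parameterized $\Ext$. The only small slip is typographical: in step (b) the intermediate target of the tensor-product map should be $\hom(\Lambda^{d,V}\otimes\Lambda^{e,V},I^\bullet\otimes J^\bullet)$ rather than $\hom(\Lambda^{d+e,V},I^\bullet\otimes J^\bullet)$ before precomposing with the comultiplication, but this does not affect the argument.
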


Similarly, the $2$-fold iteration of Ringel duality has an interpretation in terms of extension groups. Indeed, 
$\Theta^2 F$ equals $\RR\H(\Lambda^d,\Theta F)$, which is isomorphic to $\RR\H(\Theta^{-1}\Lambda^d,F)$ since $\Theta$ is an equivalence of categories. Now, $\Theta^{-1}\Lambda^d=S^d$, 
so corollary \ref{cor-1} yields an isomorphism:
$$L_{2d-i}F(V;2)\simeq \E^i(S^d,F)(V)\;.$$

\subsection{Proof of theorem \ref{thm-main}}\label{subsec-proof-main}

Lemma \ref{lm-commut} yields an isomorphism of triangle monoidal functors $\Sigma^n\circ \Theta^n\simeq (\Sigma\circ \Theta)^n$ and similarly, corollary \ref{cor-iter} yields an isomorphism of triangle monoidal functors $L(-;n)\simeq L(-;1)^n$. So it suffices to prove theorem \ref{thm-main} for $n=1$.  In the latter case, theorem \ref{thm-main} is a consequence of the following statement. 
\begin{proposition}\label{prop-main}
The following two composites are isomorphic as triangle and monoidal functors.
\begin{align*} &\DD^b(\P_{\k})\xrightarrow[]{\Theta} \DD^b(\P_\k)\xrightarrow[]{\Sigma} \DD^b(\P_\k)\to \DD^-(\P_\k)\\
&\DD^b(\P_\k)\to \DD^-(\P_\k)\xrightarrow[]{L(-;1)} \DD^-(\P_\k).\end{align*}
\end{proposition}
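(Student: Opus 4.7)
The plan is to construct the natural isomorphism on a generating subcategory of $\DD^b(\P_\k)$ and then propagate it by triangulated and monoidal formalities. Since $\DD^b(\P_\k)=\bigoplus_d \DD^b(\P_{d,\k})$ has finite homological dimension over the PID $\k$, each summand is generated as a triangulated category by the injective cogenerators $S^\mu$. Both $L(-;1)$ and $\Sigma\Theta$ are triangle functors, so it will suffice to produce natural quasi-isomorphisms $\Sigma\Theta(S^\mu)\xrightarrow{\simeq} L(S^\mu;1)$, check naturality with respect to morphisms $S^\mu\to S^\nu$, and verify compatibility with the monoidal structures ($\lax$, $\xi$) on one side and $\oshuffle$ on the other.

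I would begin with the pivotal object $\otimes^d$. By Lemma~\ref{lm-comput}(i), $\Theta(\otimes^d)\simeq\otimes^d$ with the signed $\Si_d$-action $\sigma\mapsto\epsilon(\sigma)\sigma$, so $\Sigma\Theta(\otimes^d)=\otimes^d[-d]$. On the derivation side, $(\otimes^d)_{K(1)}(V)=(K(1)\otimes V)^{\otimes d}$ is, as a simplicial $\k$-module, the diagonal of a $d$-fold tensor product of copies of $K(1)\otimes V$. The generalized Eilenberg--Zilber Theorem (Proposition~\ref{prop-general-EZ}) produces a natural shuffle quasi-isomorphism $\C K(1)^{\otimes d}\otimes V^{\otimes d}\to \C((K(1)\otimes V)^{\otimes d})$. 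Combining this with the Dold--Kan identification $\C K(1)\simeq\k[-1]$ and the fact that tensoring with the constant simplicial object $V^{\otimes d}$ commutes with $\C$, I obtain a natural quasi-isomorphism $\Psi_d\colon\otimes^d[-d]\xrightarrow{\simeq}L(\otimes^d;1)$. Applying the Koszul sign rule to the degree-one factors of $\C K(1)^{\otimes d}$ produces precisely the signed $\Si_d$-action $\epsilon(\sigma)\sigma$, matching the sign in Lemma~\ref{lm-comput}(i). Since the multiplication $\otimes^d\twoheadrightarrow S^d$ is sent by $\H(\Lambda^d,-)$ to $\otimes^d\twoheadrightarrow\Lambda^d$ (Lemma~\ref{lm-comput}(ii)), $\Psi_d$ descends by naturality to a quasi-isomorphism $\Sigma\Theta(S^d)\simeq\Lambda^d[-d]\xrightarrow{\simeq} L(S^d;1)$.

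Extension to general $\mu$ proceeds by induction on the number of parts using the monoidal coherence of both functors: one decomposes $S^\mu=\bigotimes_i S^{\mu_i}$ via $\lax\circ\xi$ on one side and $\oshuffle$ on the other, and applies the isomorphism factor by factor. Naturality with respect to morphisms $S^\mu\to S^\nu$ then follows from Lemma~\ref{lm-iso}: both $\H(\Lambda^d,-)$ and $L(-;1)$ identify $\hom_{\P_{d,\k}}(S^\mu,S^\nu)$ with $\hom_{\P_{d,\k}}(\Lambda^\mu,\Lambda^\nu)$ (the latter identification arising through $\Psi$), and the two identifications agree. Extension from the injective cogenerators to arbitrary bounded complexes of injectives is formal from the triangle structure; together with the equivalence $\DD^b(\P_\k)\simeq\KK^b(\Inj\P_\k)$, this propagates the natural isomorphism of triangle monoidal functors to all of $\DD^b(\P_\k)$.

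The main technical obstacle is the sign bookkeeping in the monoidal compatibility. The individual functors $\Sigma$ and $\Theta$ are monoidal but not symmetric (Remarks~\ref{rk-sgn-Sigma} and~\ref{rk-sgn-Ringel}); their $(-1)^{de}$ swap-signs cancel only in the composite $\Sigma\Theta$. On the derivation side $L(-;1)$ is already symmetric monoidal (Proposition~\ref{prop-sym}), and the signs in $\oshuffle$ arise from the Koszul convention on mixed bicomplexes. Verifying that all these signs match at the chain level in the hexagonal monoidality diagrams, and that $\Psi$ itself is monoidal as an iterated-shuffle compatibility, is the delicate part; the seemingly ad hoc choice of signs in $\xi_{d,e}$ (flagged by the author in the remark following the definition) is calibrated precisely for this to succeed.
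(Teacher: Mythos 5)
Your strategy — building $\Psi$ on the injective cogenerators $S^\mu$ and bootstrapping via the equivalence $\DD^b(\P_\k)\simeq\KK^b(\Inj\P_\k)$ — is in principle workable, but as written it has two real gaps, precisely at the places where the actual work must happen. First, the descent from $\otimes^d$ to $S^d$: you assert that $\Psi_d$ ``descends by naturality'' along the multiplication $\otimes^d\twoheadrightarrow S^d$, but naturality is exactly what you are trying to establish. What the descent actually requires is a chain-level verification that the composite $L(m;1)\circ\Psi_{\otimes^d}$ kills the kernel of $\otimes^d[-d]\to\Lambda^d[-d]$, which amounts to checking that your identification of the Koszul-sign permutation action on $\C K(1)^{\otimes d}$ with the signed $\Si_d$-action of Lemma~\ref{lm-comput}(i) holds on the nose, not just up to isomorphism. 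You flag the signs but do not do the computation, and the paper's remark about the ``good'' choice of $\xi_{d,e}$ is a warning that this is not a formality. Second and more seriously, your naturality claim for arbitrary $f\colon S^\mu\to S^\nu$ — that ``the two identifications agree'' with the one from Lemma~\ref{lm-iso} — is exactly the content of the proposition restricted to morphisms between injectives, and you assert it without argument. Lemma~\ref{lm-iso} says $\H(\Lambda^d,-)$ induces a bijection on these hom-sets; it does not say anything about what $L(-;1)$ does to them or about compatibility of your separately-built $\Psi_\mu$'s with that bijection.

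The paper avoids both issues by building a single, explicit chain-level natural isomorphism once and for all, rather than object by object. It rewrites $\Theta$ (via $\sharp$-duality) as $\RR\H(-^\sharp,\Lambda^d)$, then replaces $\Lambda^d$ by the explicit injective coresolution $J(d)=(\C S^d_{K(1)})[d]$; the required monoidal structure on this family of coresolutions comes for free from the commutative monoid structure of $\C S_{K(1)}$ and the universal property of the exterior algebra, and the comparison with $L(-;1)$ reduces to the parameterized Yoneda isomorphism $\H(F^\sharp,S^d_U)\simeq F_U$ applied to $U=K(1)$. In particular the paper starts from $S^d$ rather than $\otimes^d$, which sidesteps the $\Si_d$-equivariant descent entirely, and all the sign bookkeeping is confined to a few explicitly-stated compatibilities of $\H$ with suspension and tensor product. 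If you want to pursue your route, the missing steps are: (i) write out the $\Si_d$-equivariant chain-level formula for $\Psi_{\otimes^d}$ and verify the sign claim; (ii) prove naturality for all maps between injectives, which in practice means lifting $f\colon S^\mu\to S^\nu$ to $\otimes^d$ and chasing through (i). That is essentially the same amount of work as the paper's proof, routed through a thornier special case.
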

The remainder of section \ref{subsec-proof-main} is devoted to the proof of proposition \ref{prop-main}.

\subsubsection{Conventions for $\H(C,D)$}\label{subsubsec-convention} For the proof, we need to consider $\H(C,D)$ when both $C$ and $D$ are complexes. In this paragraph, we give sign conventions for $\H(C,D)$ and their consequences.

If $C\in \Ch^+(\P_\k)$ and $D\in \Ch^-(\P_\k)$, we define $\H(C,D)\in \Ch^-(\P_\k)$ as the total complex of the bicomplex $(\H(C_i,D^j),\H(\partial_C,D^j),\H(C_i,\partial_D))$. Thus,
\begin{equation*}
\begin{aligned}&\H(C,D)^n=\bigoplus_{i+j=n}\H(C_i,D^j)\;,\\
&\partial(f)=f\circ \partial_C + (-1)^{i}\partial_D\circ f\quad \text{ if $f\in \H(C_i,D^j)$.} 
\end{aligned}
\end{equation*}
With this convention, we have the following compatibility results with the tensor products, suspension and duality.

\begin{description}
\item[Tensor products.] Tensor products yield a morphism of complexes
$$\begin{array}{cccc}
\otens: & \H(C_i,D^j)\otimes \H(E_k,F^\ell) &\to & \H(C_i\otimes E_k, D^j,F^\ell)\;.\\
& f\otimes g &\mapsto & (-1)^{jk}f\otimes g    
\end{array}$$
\item[Suspension.] There is an isomorphism of complexes
$$\psi_d: \H(C,D[d])\xrightarrow[]{\simeq} \H(C,D)[d]\;,$$
which sends $f\in \H(C_i,D[d]^j)$ to $\psi(f)=(-1)^{di} f$. Moreover, the following diagram commutes
$$\xymatrix{
\H(C,D[s])\otimes \H(E,F[t]) \ar[rr]^-{\psi_s\otimes\psi_t}\ar[d]^-{\otens}&& \H(C,D)[s]\otimes \H(E,F)[t]\ar[d]^-{\xi_{s,t}}\\
\H(C\otimes E,D[s]\otimes F[t])\ar[d]^-{\H(C\otimes E,\xi_{s,t})}&& \left(\H(C,D)\otimes \H(E,F)\right)[s+t]\ar[d]^-{\otens[s+t]}\\
\H(C\otimes E,(D\otimes F)[s+t])\ar[rr]^-{\psi_{s+t}}&&\H(C\otimes E,D\otimes F)[s+t]\;.
}$$
\item[Duality.] The dual of a complex $C$ is the complex $C^\sharp$ with $(C^\sharp)_i=(C^i)^\sharp$ and $\partial_{C^\sharp}= (\partial_C)^\sharp$ (no sign on the differential of the dual). Duality commutes with tensor products: $C^\sharp\otimes D^\sharp=(C\otimes D)^\sharp$. There is an isomorphism of complexes:
$$\begin{array}{cccc}
^\sharp:&\H((C^{\sharp})_i, D^j) & \xrightarrow[]{\simeq} & \H((D^{\sharp})_j, C^i)\;.\\
&f & \mapsto & (-1)^{ij} f^\sharp  
\end{array}
$$
Moreover, the following diagram commutes:
$$\xymatrix{
\H(C^\sharp, D)\otimes \H(E^\sharp, F) \ar[d]^-{\otens}\ar[r]^-{^\sharp\otimes^\sharp}& \H(D^\sharp, C)\otimes \H(F^\sharp, E)\ar[d]^-{\otens}\\
\H(C^\sharp\otimes E^\sharp, D\otimes F)\ar[r]^-{^\sharp}&\H(D^\sharp\otimes F^\sharp, C\otimes E)\;.
}$$
\end{description}

\subsubsection{Plan of the proof of proposition \ref{prop-main}.} We first give an equivalent definition of $\Theta$.
By duality $\Theta:\DD^b(\P_{d,\k})\to \DD^b(\P_{d,\k})$ is isomorphic to the right derived functor of $F\mapsto \H(F^\sharp,\Lambda^d)$, and the morphism $\lax$ may be described as the composite (where the last map is induced by the multiplication $\Lambda^d\otimes\Lambda^e\to \Lambda^{d+e}$).
$$\RR\H(C^\sharp,\Lambda^d)\otimes \RR\H(D^\sharp,\Lambda^e)\xrightarrow[]{\otimes}\RR\H(C^\sharp\otimes D^\sharp,\Lambda^d\otimes\Lambda^e)\to \RR\H((C\otimes D)^\sharp, \Lambda^{d+e})\;. $$

Now assume that there is a family of complexes of injectives $J(d)\in \Ch^-(\P_{d,\k})$, together with quasi-isomorphisms $\phi_d:\Lambda^d\to J(d)$ and with morphisms $f_{d,e}: J(d)\otimes J(e)\to J(d+e)$ such that the following diagram commutes in $\Ch^-(\P_\k)$:
$$\xymatrix{\Lambda^d\otimes \Lambda^e\ar[d]^-{\phi_d\otimes\phi_e}\ar[r]^-{\mathrm{mult}}&\Lambda^{d+e}\ar[d]^-{\phi_{d+e}}\\
J(d)\otimes J(e)\ar[r]^-{f_{d,e}}& J(d+e)\;.}$$
Then $\Theta$ is isomorphic to the localization of the functor   
$$\begin{array}{cccc}
\wtheta: &\Ch^b(\P_{d,\k})&\to &\Ch^-(\P_{d,\k})\;.\\
& C &\mapsto &\H(C^\sharp,J(d))
\end{array}$$
Moreover, for $C\in \Ch^-(\P_{d,\k})$ and $D\in \Ch^-(\P_{e,\k})$, the morphism $\lax$ corresponds to the natural transformation $\wlax$ defined as the composite (where the last map is induced by $f_{d,e}$):
$$\H(C^\sharp,J(d))\otimes\H(D^\sharp,J(e))\xrightarrow[]{\otens} \H(C^\sharp\otimes D^\sharp, J(d)\otimes J(e))\xrightarrow[]{} \H((C\otimes D)^\sharp, J(d+e))\;,$$
and the unit $\phi: \k\to \Theta(\k)$ is induced by the morphism $\widetilde{\phi}$
$$\k=\H(\k,\k)=\H(\k,\Lambda^0)\to \H(\k,J(0))\;. $$

The proof of proposition \ref{prop-main} is organized as follows.
\begin{itemize}
\item In a first step, we make explicit choices of coresolutions $J(d)$ and maps $f_{d,e}$. Thus we get an explicit monoidal functor $(\wtheta,\wlax,\widetilde{\phi})$. 
\item In a second step, we prove that with our choices, the functors $\Sigma\circ \wtheta$ and $L(-;1)$ are isomorphic as functors from $\Ch^b(\P_{d,\k})$ to $\Ch^-(\P_{d,\k})$, and that this isomorphism commutes with suspension and monoidal structures.
\end{itemize}

\subsubsection{Construction of $(\wtheta,\wlax,\widetilde{\phi})$.}

To define $(\wtheta,\wlax,\widetilde{\phi})$, we take the complexes $J(d):= (\C S^d_{K(1)})[d]$ and the maps $f_{d,e}$ provided by the following lemma.

\begin{lemma}
For all $d\ge 0$, there is a quasi-isomorphism of complexes:
$$\Lambda^d\xrightarrow[]{\phi_d} (\C S^d_{K(1)})[d]\;. $$ 
Moreover, if $\nabla$ denotes the shuffle map, let $f_{d,e}$ denote the composite:
\begin{align*}(\C S^d_{K(1)})[d]&\otimes (\C S^e_{K(1)})[e]\xrightarrow[]{(\xi^{-1})_{d,e}}(\C S^d_{K(1)})\otimes (\C S^e_{K(1)})[d+e]\\&\xrightarrow[]{\nabla[d+e]} \C (S^d\otimes S^e)_{K(1)}[d+e]\xrightarrow[]{\mathrm{mult}[d+e]} \C (S^{d+e})_{K(1)}[d+e]\;,\end{align*}
then the following diagram is commutative: 
$$\xymatrix{\Lambda^d\otimes \Lambda^e\ar[d]^-{\phi_d\otimes\phi_e}\ar[r]^-{\mathrm{mult}}&\Lambda^{d+e}\ar[d]^-{\phi_{d+e}}\\
(\C S^d_{K(1)})[d]\otimes (\C S^e_{K(1)})[e]\ar[r]^-{f_{d,e}}& (\C S^{d+e}_{K(1)})[d+e]\;.}$$
\end{lemma}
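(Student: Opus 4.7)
My plan is to first give an explicit formula for $\phi_d$ in terms of the basis of $K(1)_d$, then verify the cocycle and quasi-isomorphism properties, and finally treat the multiplicative compatibility.

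For the construction: the Dold--Kan description of $\K(\k[-1])$ gives that $K(1)_n$ is the free $\k$-module with basis $e_1,\dots,e_n$ indexed by the non-degenerate $n$-simplices of the simplicial circle, so
$$\bigl(\C S^d_{K(1)}\bigr)_d(V)=S^d(\k^d\otimes V)=\bigoplus_\mu S^\mu(V),$$
the sum being taken over compositions $\mu=(\mu_1,\dots,\mu_d)$ of weight $d$. The summand $S^{(1,\dots,1)}(V)$ is canonically $V^{\otimes d}$, and I would define $\phi_d$ as the composite of the comultiplication $\Lambda^d\hookrightarrow\otimes^d$ with this inclusion. Explicitly,
$$\phi_d(v_1\wedge\cdots\wedge v_d)=\sum_{\sigma\in\Si_d}\epsilon(\sigma)(e_1\otimes v_{\sigma(1)})\cdots(e_d\otimes v_{\sigma(d)}),$$
the product being taken in $S^d(\k^d\otimes V)$.

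To check that $\phi_d$ lies in the kernel of the differential $J(d)^0\to J(d)^1$, I would work out the face maps of $K(1)$ from the Dold--Kan formula: each $d_i\colon K(1)_d\to K(1)_{d-1}$ sends $e_j$ to $e_{j-1}$ or $e_j$ according to whether $i<j$ or $i\geq j$ (with the boundary cases $d_0 e_1$ and $d_d e_d$ becoming degenerate). Consequently, each non-vanishing term in $\bigl(\sum(-1)^i d_i\bigr)\phi_d$ contains two factors sharing the same basis element $e_j$; since the product in $S^d$ is symmetric while $\phi_d$ is antisymmetric in the $v_i$, these terms cancel in pairs under the sum over $\Si_d$. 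For the quasi-isomorphism statement I would invoke the classical Dold--Puppe calculation \cite{DP2}, according to which $H_\bullet(\C S^d_{K(1)})$ is concentrated in degree $d$ and equals $\Lambda^d$. Since $\phi_d$ defines a nonzero strict polynomial natural transformation from $\Lambda^d$ to $H_d(\C S^d_{K(1)})\simeq\Lambda^d$ (nonvanishing being checked by evaluating at $V=\k^d$), and since $\End_{\P_{d,\k}}(\Lambda^d)=\k$, the induced map on $H_d$ must be an isomorphism.

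Finally, the multiplicative diagram is a sign computation. Tracing $\omega\otimes\eta\in\Lambda^d(V)\otimes\Lambda^e(V)$ through the upper-right path yields the single signed sum $\phi_{d+e}(\omega\wedge\eta)$ indexed by $\Si_{d+e}$. Tracing through the lower-left path combines the sign $(-1)^{ei}$ coming from $\xi^{-1}_{d,e}$, the shuffle signs from the explicit formula for $\nabla$, the antisymmetrization signs from $\phi_d$ and $\phi_e$, and finally the multiplication in $S^*$ applied to $K(1)$ parameters. The equality of the two paths then follows from the standard decomposition $\Si_{d+e}\simeq\mathrm{Sh}(d,e)\times(\Si_d\times\Si_e)$, provided all the signs match. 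I expect the main obstacle to be precisely this sign bookkeeping: the paper's specific choice of $(-1)^{ei}$ in $\xi^{-1}_{d,e}$ (rather than, say, $(-1)^{di}$) is calibrated so that the three independent sources of signs cancel, and the core of the proof is to verify this cancellation explicitly.
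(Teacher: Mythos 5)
Your approach differs genuinely from the paper's, and while the outline is reasonable, it has two real gaps.

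On the route: the paper avoids both the cocycle check and the multiplicative diagram by first observing that $\C S_{K(1)}=\bigoplus_{d\ge 0}\C S^d_{K(1)}$ is a graded-commutative monoid in $\Ch^-(\P_\k)$ (via the shuffle map followed by the product of symmetric powers), and then producing $\phi$ in one stroke as the unique map of graded-commutative monoids $\bigoplus_d\Lambda^d[-d]\to \C S_{K(1)}$ extending the evident degree-one quasi-isomorphism $\Lambda^1[-1]\hookrightarrow\C S^1_{K(1)}$, $v\mapsto e_1\otimes v$. Multiplicativity is then automatic, and the quasi-isomorphism claim is reduced by the exponential property $A(U\oplus V)\simeq A(U)\otimes A(V)$ to the case $V=\k$, which becomes the bar-complex computation $H_*(\C S_{K(1)}(\k))\simeq\mathrm{Tor}^{S(\k)}(\k,\k)=\k[-1]$. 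Your proposal instead hand-builds $\phi_d$ from the Dold--Kan model of $K(1)$, verifies the cycle condition directly (this part is correct), and then appeals to Schur's lemma and to an explicit sign bookkeeping you do not carry out.

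The gaps are the following. First, in the quasi-isomorphism step you argue that the induced map on $H_d$ is a nonzero element of $\End_{\P_{d,\k}}(\Lambda^d)=\k$ and conclude it is an isomorphism. Over a PID this does not follow: a nonzero scalar need not be a unit (over $\k=\Z$, multiplication by $2$ on $\Lambda^d$ is nonzero but not invertible). You would need to show that evaluation of $\phi_d$ at $V=\k^d$ hits a \emph{generator} of $H_d(\C S^d_{K(1)})(\k^d)\cong\k$, not merely a nonzero element. Second, the commutativity of the square is precisely the nontrivial sign-sensitive part of this lemma --- you say so yourself, noting that the choice of $(\xi^{-1})_{d,e}$ is ``calibrated'' so the signs cancel --- and it is left entirely unverified. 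Observing that $\Si_{d+e}\simeq\mathrm{Sh}(d,e)\times(\Si_d\times\Si_e)$ and declaring that ``the signs match'' is not a proof; it is exactly the claim to be established. Here the paper's monoid argument does genuine work: since $\phi$ is constructed as a monoid morphism, the square commutes by definition once one checks that the extra sign contributed by $(\xi^{-1})_{d,e}$ on the source $\Lambda^d[-d][d]\otimes\Lambda^e[-e][e]$ is $(-1)^{e\cdot 0}=1$.
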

\begin{proof}
Since the shuffle map $\nabla$ is strictly associative and strictly commutative, the composite
$$\C S^d_{K(1)}\otimes \C S^e_{K(1)}\xrightarrow[]{\nabla}\C (S^d\otimes S^e)_{K(1)}\xrightarrow[]{\mathrm{mult}} \C S^{d+e}_{K(1)}  $$
makes $\C S_{K(1)}=\bigoplus_{d\ge 0} \C S^d_{K(1)}$ a commutative monoid in $\Ch^-(\P_\k)$.

By definition $\C S^1_{K(1)}= (\C K(1))\otimes S^1$ has homology equal to the functor $S^1=\Lambda^1$, placed in degree $1$. Choosing a cycle in the complex of $\k$-modules $\C K(1)$ and tensoring by $\Lambda^1=S^1$, we obtain a quasi-isomorphism of complexes of strict polynomial functors (put the trivial differential on the left hand side):
$$\Lambda^1[-1]\hookrightarrow \C S^1_{K(1)}\;.$$
Since $\C S_{K(1)}$ is graded commutative, the universal property of exterior algebras yield a morphism of commutative monoids in  $\Ch^-(\P_\k)$:
$$A=\bigoplus_{d\ge 0}\Lambda^d[-d]\hookrightarrow \bigoplus_{d\ge 0} \C S^d_{K(1)}=\C S_{K(1)}\;. \qquad(\dag)$$

It is well known that $(\dag)$ is a quasi-isomorphism. Indeed, for all free finitely generated $\k$-modules $U$, $V$, there is a commutative diagram (where the horizontal quasi-isomorphisms are defined via multiplications):
$$\xymatrix{
A(U)\otimes A(V)\ar@{^{(}->}[d]\ar[rr]^-{\simeq}&& A(U\oplus V)\ar@{^{(}->}[d]\\
\C S_{K(1)}(U)\otimes \C S_{K(1)}(V)\ar[rr]^-{\mathrm{mult}\,\circ\nabla}&&\C S_{K(1)}(U\oplus V)
}$$ 
so it suffices to prove that $\k[-1]=A(\k)\hookrightarrow \C S_{K(1)}(\k)$ is a quasi-isomorphism. By construction the map $\k[-1]\hookrightarrow S^1_{K(1)}(\k)=K(1)$ is a quasi isomorphism, so we only have to check that the homology of $\bigoplus_{d\ne 1}\C S^d_{K(1)}(\k)$ vanishes. We readily check that $\N S(K(1))=\overline{B} (S(\k))$ (the bar complex of the symmetric algebra on one generator), so the vanishing follows from the equalities (the last equality follows e.g. from \cite[VII Thm 2.2]{ML})
$$H(\C S_{K(1)}(\k))\simeq H(\N S_{K(1)}(\k))= H(\overline{B}(S(\k)))=\mathrm{Tor}^{S(k)}(\k,\k)=\k[-1]\;.$$ 

Restricting the quasi-isomorphism $(\dag)$ to the homogeneous part of degree $d$ and shifting by $d$ we get the required isomorphism:
$$\phi_d:\Lambda^d=(\Lambda^d[-d])[d]\hookrightarrow (\C S^d_{K(1)})[d]\;.$$
Moreover, since $(\dag)$ is a morphism of monoids, we have commutative diagrams:
$$
\xymatrix{(\Lambda^d[-d])[d]\otimes (\Lambda^e[-e])[e]\ar[d]^-{\phi_d\otimes\phi_e}\ar[rrr]^-{\mathrm{mult}[d+e]\circ(\xi^{-1})_{d,e}} &&&\Lambda^{d+e}[-d-e][d+e]\ar[d]^-{\phi_{d+e}}\\
(\C S^d_{K(1)})[d]\otimes (\C S^e_{K(1)})[e]\ar[rrr]^-{f_{d,e}}&&& (\C S^{d+e}_{K(1)})[d+e]}.
$$
To finish the proof, we observe that the sign induced by $(\xi^{-1})_{d,e}$ is equal to $1$. Hence the upper horizontal arrow identifies with the multiplication $\Lambda^d\otimes\Lambda^e\to \Lambda^{d+e}$.
\end{proof}

\subsubsection{Proof of the isomorphism $\Sigma\circ \wtheta\simeq L(-;1)$.} To prove the isomorphism between $\Sigma\circ \wtheta$  and $L(-;1)$ commuting with suspension and monoidal structures, we first introduce yet another monoidal functor 
$$L':\Ch^b(\P_\k)\to \Ch^-(\P_\k)\;.$$ 
Namely, if $C\in\Ch^b(\P_{d,\k})$, we let 
$$L'(C)=\H(C^\sharp, \C S^d_{K(1)})\;.$$
We observe that $L'(C[1])$ equals $L'(C)[1]$, so that $L'$ commutes with suspension. The monoidal structure of $L'$ is defined for $C\in\Ch^-(\P_{d,\k})$ and $D\in\Ch^-(\P_{e,\k})$ as the composite:
\begin{align*}\H(C^\sharp, \C S^d_{K(1)})\otimes \H(D^\sharp, \C S^e_{K(1)})& \xrightarrow[]{\otens} \H(C^\sharp\otimes D^\sharp, \C S^d_{K(1)}\otimes \C S^e_{K(1)})\\& \to\H(C^\sharp\otimes D^\sharp, \C (S^d\otimes S^e)_{K(1)}) \\
&\to\H(C^\sharp\otimes D^\sharp, \C (S^{d+e})_{K(1)})\;, 
\end{align*}
where the second map is induced by the shuffle map $\nabla: \C S^d_{K(1)}\otimes \C S^e_{K(1)}\to \C (S^d\otimes S^e)_{K(1)}$, and the last one is induced by the multiplication $S^d\otimes S^e\to S^{d+e}$. The unit morphism is the identity map in degree zero:
$$\k=\H(\k,\k)\to \H(\k,\k_{K(1)})=\k_{K(1)}\;.$$

To construct the isomorphism $\Sigma\circ \wtheta\simeq L(-;1)$ we compose the isomorphisms $L'\simeq L(-;1)$ and $\Sigma\circ \wtheta\simeq L'$ given by the two following two lemmas. This will finish the proof of proposition \ref{prop-main}.

\begin{lemma}
There is an isomorphism $L'\simeq L(-;1)$. This isomorphism commutes with suspension and with the monoidal structures. 
\end{lemma}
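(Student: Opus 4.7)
The plan is to construct $\Phi_C\colon L'(C)\xrightarrow[]{\simeq}L(C;1)$ termwise via a Yoneda-type identification, and then verify its compatibility with suspension and with the monoidal structures. The key input is a natural isomorphism $\eta_{F,V}\colon\H(F^\sharp,S^d_V)\xrightarrow[]{\simeq}F_V$ for $F\in\P_{d,\k}$ and $V\in\V_\k$, obtained by combining the duality $\H(F,G)\simeq\H(G^\sharp,F^\sharp)$ of section~\ref{subsubsec-internal} with the Yoneda isomorphism $\H(\Gamma^{d,V},G)\simeq G_V$, via the identity $S^d_V=(\Gamma^{d,V})^\sharp$.

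For $C\in\Ch^b(\P_{d,\k})$, both $L'(C)$ and $L(C;1)$ are totalizations of bicomplexes; applying $\eta_{C^i,K(1)_j}$ bidegree by bidegree yields a map between the underlying bigraded objects. Naturality of $\eta$ in both arguments implies it commutes with the two differentials (the horizontal one coming from $\partial_C$, the vertical one from the simplicial structure of $K(1)$), and totalizing produces the desired natural isomorphism $\Phi_C$. Compatibility with suspension follows from the tautological equalities $L'(C[1])=L'(C)[1]$ and $L(C[1];1)=L(C;1)[1]$, together with the observation that the sign conventions of section~\ref{subsubsec-convention} (in particular those of $\psi_d$ and of the duality on complexes) are designed precisely so that $\Phi_{C[1]}$ agrees with $\Phi_C[1]$ under these identifications.

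To check monoidal compatibility, I will verify that the square
\[
\xymatrix@C=1cm{L'(C)\otimes L'(D)\ar[r]^-{\wlax}\ar[d]_-{\Phi_C\otimes\Phi_D}&L'(C\otimes D)\ar[d]^-{\Phi_{C\otimes D}}\\L(C;1)\otimes L(D;1)\ar[r]^-{\oshuffle}&L(C\otimes D;1)}
\]
commutes. The top arrow decomposes as $\otens$, followed by the shuffle $\nabla\colon\C S^d_{K(1)}\otimes\C S^e_{K(1)}\to\C(S^d\otimes S^e)_{K(1)}$, followed by the multiplication $S^d\otimes S^e\to S^{d+e}$. Under $\eta$ applied termwise, these three steps combine into the classical Eilenberg--Zilber shuffle on $\C C_{K(1)}\otimes\C D_{K(1)}\to\C(C\otimes D)_{K(1)}$, which is precisely $\oshuffle$. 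Unit compatibility is immediate, since both unit maps reduce to the identity of $\k$ in degree zero.

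The hard part will be sign bookkeeping in this last step: the Koszul signs introduced by $\otens$, by $\psi_d$, and by the duality on complexes must cancel against the Koszul signs appearing in $\oshuffle$ and in the shuffle $\nabla$. The sign conventions introduced in section~\ref{subsubsec-convention} were made precisely to enforce this coherence, so the verification is mechanical but somewhat tedious.
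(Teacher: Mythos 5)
Your proposal follows the paper's approach almost exactly: you build the isomorphism from the Yoneda/duality identification $\H(F^\sharp,S^d_U)\simeq F_U$ (the paper's $(\star)$), extend it to mixed complexes with parameter $K(1)$, and reduce the monoidal compatibility to the shuffle map. The one place where you defer is the step you flag as "mechanical but tedious"; the paper actually carries it out, by first establishing a pointwise commutative square relating $(\star)$ to the map $\mathrm{mult}(f,g):S^d_U\otimes S^e_V\to S^{d+e}_W$ for any pair of maps $f:U\to W$, $g:V\to W$, and then invoking the explicit formula $\nabla=\sum_\mu\epsilon(\mu)f_\mu\otimes g_\mu$ over $(p,q)$-shuffles so that the comparison becomes a finite sum of instances of that square. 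This is worth including, since it is the only genuinely non-formal step: the two vertical maps in the bicomplex diagram only match up because the shuffle map on the $\H$-side and the shuffle map on the $C_{K(1)}$-side are given by the same families $(f_\mu,g_\mu)$ of degeneracy composites, with the same signs $\epsilon(\mu)$; the $(-1)^{jp}$ Koszul signs coming from $\otens$ and from $\oshuffle$ then cancel identically.
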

\begin{proof}
Recall from section \ref{subsubsec-internal} the isomorphism:
$$\H(F^\sharp,S^d_U)\simeq \H(\Gamma^{d,U},F)\simeq F_U\;.\qquad(\star)$$
Since this isomorphism is natural with respect to $F$ and $U$, we can replace $F$ by a complex $C\in \Ch^b(\P_{d,\k})$ and $U$ by the simplicial $\k$-module $K(1)$ to get an isomorphism of mixed complexes $\H(C^\sharp,S^d_{K(1)})\simeq C_{K(1)}$,
hence an isomorphism of complexes
$$L'(C)=\H(C^\sharp, \C S^d_{K(1)})=\MC\H(C^\sharp,S^d_{K(1)})\simeq \MC C_{K(1)}=L(C;1)\;.$$
It is obvious from the definition that this isomorphism commutes with suspension. If $d=0$ and $C=\k$, this isomorphism is the identity in degree zero, so it preserves the units of the monoidal functors $L'$ and $L(-;1)$. So it remains to check that the following diagram is commutative.
$$\xymatrix{
L'(C)\otimes L'(D)\ar[d]\ar[r]^-{\simeq}& L(C;1)\otimes L(D;1)\ar[d]^-{\oshuffle}\\
L'(C\otimes D)\ar[r]^-{\simeq} & L(C\otimes D;1)\;.
}\qquad \text{(D1)}$$
Since the functors are additive, we may restrict to the case $C\in \Ch^b(\P_{d,\k})$ and $D\in \Ch^b(\P_{e,\k})$. We proceed in two steps.

{\bf Step 1. A commutative diagram.} If $f\in \hom_{\V_\k}(U,W)$ and $g\in \hom_{\V_\k}(V,W)$ we denote by $\mathrm{mult}(f,g)$ the following composite, where the first map is induced by $f$ and $g$ and the second map is induced by the multiplication $S^d\otimes S^e\to S^{d+e}$:
$$\mathrm{mult}(f,g): S^d_{U}\otimes S^e_V\to S^d_{W}\otimes S^e_W \to S^{d+e}_W\;. $$
Then one readily checks that for all $F\in\P_{d,\k}$ and $G\in \P_{e,\k}$, the isomorphisms $(\star)$ fit into a commutative diagram:
$$\xymatrix{
\H(F^\sharp,S^d_U)\otimes \H(G^\sharp,S^d_V)\ar[d]^-{\otimes}\ar[rr]^-{\simeq} && F_U\otimes G_V\ar[dd]^-{F_f\otimes F_g}\\
\H(F^\sharp\otimes G^\sharp,S^d_U\otimes S^d_V)\ar[d]^-{\H(F^\sharp,G^\sharp,\mathrm{mult}(f,g))} &&\\
\H(F^\sharp\otimes G^\sharp, S^{d+e}_W)\ar[rr]^-{\simeq}&& F_W\otimes G_W \;.
}\qquad\text{(D2)}$$

{\bf Step 2. Proof of the commutativity of diagram (D1).} Restriction of diagram (D1) to the indices $i,j,p,q$ yields the following diagram.
$$\xymatrix{
\H((C^i)^\sharp,S^d_{K(1)_p})\otimes \H((D^j)^\sharp,S^d_{K(1)_q})\ar[d]\ar[r]^-{\simeq}_-{(\star)\otimes(\star)}& C^i_{K(1)_{p}}\otimes D^j_{K(1)_{q}}\ar[d]\\
\H((C^i)^\sharp\otimes (D^j)^\sharp,S^{d+e}_{K(1)_{p+q}})\ar[r]^-{\simeq}_-{(\star)}& C^i_{K(1)_{p+q}}\otimes D^j_{K(1)_{p+q}}\;.}
\quad \text{(D3)}$$
So, to prove the commutativity of diagram (D1), it suffices to prove that (D3) commutes for all indices $i,j,p,q$. 
Let us describe explicitly the vertical arrows in diagram (D3). By \cite[VIII Thm 8.8]{ML}, if $X$ is a simplicial object, the shuffle map $\nabla:X_p\otimes X_q\to X_{p+q}\otimes X_{p+q}$ is the sum over all $(p,q)$-shuffles $\mu$
$$\nabla=\sum \epsilon(\mu) f_\mu\otimes g_\mu $$
where $f_\mu:X_p\to X_{p+q}$ and $g_{\mu}:X_q\to X_{p+q}$ are the composites (with the $\sigma_i$ denoting the degeneracy operators of $X$): 
$$f_\mu=\sigma_{\mu(p+q)}\circ \sigma_{\mu(p+q-1)}\circ\cdots \circ \sigma_{\mu(p+1)}\,,\quad g_\mu=\sigma_{\mu(p)}\circ \cdots \circ \sigma_{\mu(1)}\;.$$
As a consequence, the right vertical arrow of (D3) is equal, up to a $(-1)^{jp}$ sign, to the sum over all $(p,q)$-shuffles $\mu$ of the maps $\epsilon(\mu)F_{f_\mu}\otimes G_{g_\mu}$, while the left vertical arrow of (D3) is equal up to a $(-1)^{jp}$ sign, to the sum over all $(p,q)$-shuffles $\mu$ of the postcomposition by $\epsilon(\mu)\mathrm{mult}(f_\mu,g_\mu)$:
$$ x\otimes y\mapsto \epsilon(\mu)\mathrm{mult}(f_\mu,g_\mu)\circ (x\otimes y)\;.$$

Since diagram (D2) commutes, by taking $U=K(1)_p$, $V=K(1)_q$, $W=K(1)_{p+q}$, $F=C^i$, $G=D^j$, $f=f_\mu$ and $g=g_\mu$, we get that postcomposition by $\epsilon(\mu)\mathrm{mult}(f_\mu,g_\mu)$ identifies through isomorphism $(\star)$ with $\epsilon(\mu)F_{f_\mu}\otimes G_{g_\mu}$. By summing over all $(p,q)$-shuffles, we obtain that diagram (D3) commutes.
\end{proof}

\begin{lemma}
There is an isomorphism $\Sigma\circ \wtheta\simeq L'$. This isomorphism commutes with suspension and with the monoidal structures.  
\end{lemma}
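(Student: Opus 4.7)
The plan is to exhibit the isomorphism explicitly from the sign conventions of section \ref{subsubsec-convention} and to deduce compatibility with suspension and tensor products from the two commutative diagrams recorded there.

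First I would unwind the definitions. For $C\in\Ch^b(\P_{d,\k})$ we have $\wtheta(C)=\H(C^\sharp,(\C S^d_{K(1)})[d])$, so
$$\Sigma\wtheta(C)=\H\bigl(C^\sharp,(\C S^d_{K(1)})[d]\bigr)[-d].$$
The isomorphism $\psi_d:\H(C^\sharp,(\C S^d_{K(1)})[d])\to \H(C^\sharp,\C S^d_{K(1)})[d]$ from the suspension rule in \ref{subsubsec-convention} (sending $f\mapsto (-1)^{di}f$ on the summand $\H(C^\sharp_i,(\C S^d_{K(1)})[d]^j)$) is a natural isomorphism of complexes. Shifting by $-d$, we obtain
$$\eta_C\;:\;\Sigma\wtheta(C)\xrightarrow[]{\psi_d[-d]} \H(C^\sharp,\C S^d_{K(1)})[d][-d]=L'(C).$$
Compatibility with suspension is immediate, since $\psi_d$ is natural in $C$ and $\Sigma$ commutes with $[1]$ termwise.

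For the monoidal compatibility, let $C\in\Ch^b(\P_{d,\k})$ and $D\in\Ch^b(\P_{e,\k})$. The square I need to verify is
$$\xymatrix@C=0.6cm{
\Sigma\wtheta(C)\otimes \Sigma\wtheta(D)\ar[r]^-{\Sigma\wlax}\ar[d]_{\eta_C\otimes\eta_D}& \Sigma\wtheta(C\otimes D)\ar[d]^{\eta_{C\otimes D}}\\
L'(C)\otimes L'(D)\ar[r] & L'(C\otimes D)
}.$$
Reading $\Sigma\wlax$ from the definition of $\wlax$, and inserting the components $(\xi^{-1})_{d,e}$, $\nabla[d+e]$, $\mathrm{mult}[d+e]$ used to define $f_{d,e}$, the top row splits as the composite
$$\H(C^\sharp,J(d))[-d]\otimes\H(D^\sharp,J(e))[-e] \xrightarrow{\xi_{-d,-e}} \bigl(\H(C^\sharp,J(d))\otimes\H(D^\sharp,J(e))\bigr)[-d-e]$$
followed by $\otens[-d-e]$ and the three shifted maps induced by $f_{d,e}[-d-e]$. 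The bottom row is identical except that all occurrences of $J(d)$, $J(e)$, $J(d+e)$ are replaced by $\C S^d_{K(1)}$, $\C S^e_{K(1)}$, $\C S^{d+e}_{K(1)}$, and there are no shifts (nor any $\xi^{-1}$ factors). The key point is that the shifts in the top row exactly cancel: the $[-d]\otimes[-e]$ of the inputs, the $[d]\otimes[e]$ hidden inside $J(d)\otimes J(e)$, and the $[-d-e]$ from $\Sigma$ on the output, together with the $(\xi^{-1})_{d,e}$ appearing inside $f_{d,e}$, are organized by the commutative square in the ``Suspension'' part of \ref{subsubsec-convention} (which compares $\psi_s\otimes\psi_t$, $\otens$, $\xi_{s,t}$ and $\psi_{s+t}$ in a single hexagon). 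Applying this diagram with $s=d$, $t=e$ to $\otens$ converts the signs coming from $\psi_d\otimes\psi_e$ and the shifts into the sign coming from $\psi_{d+e}$; this is precisely why the sign in $\xi_{d,e}$ was chosen as it was. Once the $\otens$-block commutes, the remaining triangles (induced by $\nabla$ and $\mathrm{mult}$) commute by naturality of $\H(C^\sharp\otimes D^\sharp,-)$ in its second variable.

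The main obstacle is bookkeeping of signs: one must carefully track that the Koszul signs introduced by $\xi_{d,e}$, $(\xi^{-1})_{d,e}$, $\psi_d$, $\psi_e$ and $\psi_{d+e}$ all cancel. This is exactly the content of the second commutative square in \ref{subsubsec-convention}, and the choice of signs in $\xi$ and $\xi^{-1}$ was designed to make this cancellation occur. Finally, the units match: on $\k\in\Ch^b(\P_{0,\k})$, $\Sigma$ and the shift $[0]$ are the identity, $J(0)=\C S^0_{K(1)}$, so $\eta_\k$ is the identity map and $\Sigma\widetilde{\phi}$ coincides with the unit of $L'$.
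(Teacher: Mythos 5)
Your proof is correct and takes essentially the same route as the paper: you build the isomorphism from $\psi_d$ (the suspension compatibility of $\H$) and you invoke the commutative hexagon in the ``Suspension'' item of section \ref{subsubsec-convention} to handle the monoidal compatibility, which is exactly the paper's key step. Two small remarks on bookkeeping: the paper actually proves the equivalent statement $\wtheta\simeq\Sigma^{-1}\circ L'$ rather than $\Sigma\circ\wtheta\simeq L'$, which avoids ever having to insert the monoidal structure map of $\Sigma$ itself — the resulting diagram then only involves $\psi_d$, $\otens$, and the $(\xi^{-1})_{d,e}$ already built into $f_{d,e}$, so there are fewer shifts to track; and the symbol $\xi_{-d,-e}$ you write for the monoidal structure of $\Sigma$ is not defined in the paper (it is $\xi_{d,e}$, with the subscripts indexing degrees, not shift amounts).
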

\begin{proof} 
We are actually going to prove an isomorphism of functors $\wtheta\simeq \Sigma^{-1}\circ L'$, compatible with suspensions and monoidal structures. If $C\in\Ch^b(\P_{d,\k})$ compatibility of $\H$ is with suspension (cf. section \ref{subsubsec-convention}) yields an isomorphism 
$$\psi_d:\wtheta(C)=\H(C^\sharp,\C S^d_{K(1)}[d])\simeq \H(C^\sharp,\C S^d_{K(1)})[d]= L'(C)[d]=(\Sigma^{-1}\circ L')(C)\;. $$
Gathering all these isomorphisms, we obtain a morphism $\psi:\wtheta\simeq \Sigma^{-1}\circ L'$.
This isomorphism commutes with suspension. Moreover, $\psi_0$ is the identity map, so this isomorphism preserves the units of the monoidal structures. So it remains to check that the following diagram commutes
 $$\xymatrix{
\wtheta(C)\otimes \wtheta(D)\ar[d]^-{\wlax}\ar[r]^-{\psi\otimes\psi}& (\Sigma^{-1}\circ L')(C)\otimes (\Sigma^{-1}\circ L')(D)\ar[d]\\
\wtheta(C\otimes D)\ar[r]^-{\psi} & (\Sigma^{-1}\circ L')(C\otimes D)\;.
}\qquad \text{(D1)}$$
By additivity of the functors appearing in the diagram, we may restrict to the case where $C\in\Ch^b(\P_{d,\k})$ and $D\in \Ch^b(\P_{e,\k})$. Since the map $f_{d,e}$ equals the composite $(\mathrm{mult}\,\circ \nabla)[d+e]\circ (\xi^{-1})_{d,e}$, diagram (D1) can be rewritten as follows.
$$\xymatrix{
\H(C^\sharp,\C S^d_{K(1)}[d])\otimes \H(D^\sharp,\C S^e_{K(1)}[e])\ar[d]^-{\otens}\ar[r]^-{\psi_d\otimes\psi_e} & \H(C^\sharp,\C S^d_{K(1)})[d]\otimes \H(D^\sharp,\C S^e_{K(1)})[e]\ar[d]^{(\xi^{-1})_{d,e}}\\
\H(C^\sharp\otimes D^\sharp,\C S^d_{K(1)}[d]\otimes \C S^e_{K(1)}[e])\ar[d]^-{\H(C^\sharp\otimes D^\sharp,(\xi^{-1})_{d,e})}& \H(C^\sharp,\C S^d_{K(1)})\otimes \H(D^\sharp,\C S^e_{K(1)})[d+e]\ar[d]^{\otens[d+e]}\\
\H(C^\sharp\otimes D^\sharp,\C S^{d}_{K(1)}\otimes \C S^e_{K(1)}[d+e]) \ar[d]^-{\H(C^\sharp\otimes D^\sharp,(\mathrm{mult}\,\circ \nabla)[d+e])}\ar[r]^-{\psi_{d+e}}&
\H(C^\sharp\otimes D^\sharp,\C S^{d}_{K(1)}\otimes \C S^e_{K(1)})[d+e]\ar[d]^-{\H(C^\sharp\otimes D^\sharp,\mathrm{mult}\,\circ \nabla)[d+e]}\\
\H(C^\sharp\otimes D^\sharp,\C S^{d+e}_{K(1)}[d+e])\ar[r]^-{\psi_{d+e}}\ar[r]^-{\psi_{d+e}}&
\H(C^\sharp\otimes D^\sharp,\C S^{d+e}_{K(1)})[d+e]
}$$
The upper square commutes by the compatibility properties of tensor products suspensions and $\H$, cf section \ref{subsubsec-convention}, and the lower square commutes since $\psi_{d+e}$ is a natural transformation. Thus diagram (D1) commutes.
\end{proof}

\section{Applications}\label{sec-applic}
\subsection{D\'ecalages}

\subsubsection{Recollections of Schur functors and Weyl functors}\label{subsubsec-schur}
If $\lambda=(\lambda_1,\dots,\lambda_k)$ is a partition of weight $\sum\lambda_i=d$, we denote by $\lambda'$ the conjugate partition. The Schur functor $S_\lambda$ associated to the partition $\lambda$ is defined as the image of the composite:
$$d_\lambda:\Lambda^{\lambda'}\hookrightarrow \otimes^d\xrightarrow[]{\sigma_\lambda}\otimes^d\twoheadrightarrow S^\lambda\;,$$
where the first map is the canonical inclusion of $\Lambda^{\lambda'}=\bigotimes_j \Lambda^{\lambda'_j}$ into $\otimes^d$, the last map is the canonical projection onto $S^\lambda=\bigotimes_i S^{\lambda_i}$ and the middle map is the isomorphism induced by sending $v_1\otimes\dots v_d$ to $v_{\sigma_\lambda(1)}\otimes\dots\otimes v_{\sigma_\lambda(d)}$ where $\sigma_\lambda\in\Si_d$ is the permutation defined as follows. Let $t_\lambda$ be the Young tableau with
standard filling: $1,\dots,\lambda_1$ in the first row, $\lambda_1+1,\dots,\lambda_2$ in the second row, etc. Then $\sigma_\lambda$, 
in one-line notation, is the row-reading of the conjugate tableau $t_{\lambda'}$.
As particular cases of Schur functors, we recover symmetric and exterior powers:
$$S_{(1^d)}=\Lambda^d\,,\quad S_{(d)}=S^d\,. $$

These Schur functors were first defined (in arbitrary characteristic) in \cite{ABW}. They are denoted there by a letter `$L$', but we prefer to denote them by the letter `$S$' and to keep the letter `$L$' for simple objects, as it is done in \cite{Jantzen,Martin}. Also, conjugate partitions are used in \cite{ABW} to index Schur functors, but we prefer the other convention, which agrees with \cite{Green,Jantzen,Martin,MacDonald,FultonHarris}. Schur functors have various notations and names, depending on the context. For the reader interested in reading the sources we have quoted, the following table provides the translation.

\medskip

\begin{center}
\begin{tabular}{c|c|c|c|c|c|c|c}
Reference & This article & \cite{FultonHarris} & \cite{MacDonald} & \cite{ABW} & \cite{Green} & \cite{Martin} &  \cite{Jantzen} \\
\hline
Notation & $S_\lambda$ & $\mathbb{S}_\lambda$ & $F(\lambda)$ & $L_{\lambda'}$ & $D_{\lambda,\k}$ & $M(\lambda)$ & $H^0(\lambda)$
\end{tabular}
\end{center}

\medskip

\begin{remark}
The notations in this table all occur in slightly different contexts. In \cite[Lecture 6]{FultonHarris} and in \cite[I, App.A]{MacDonald}, $\mathbb{S}_\lambda$ and $F(\lambda)$ refer to Schur functors defined over complex numbers. In \cite{Green,Martin}, the notations refer to modules over the Schur algebra, the table means that $S_\lambda(\k^n)$ coincides with $D_{\lambda,\k}$ and $M(\lambda)$ as a module over the Schur algebra $S(n,d)$ ($M(\lambda)$ is called a Schur module in \cite{Martin}, and $D_{\lambda,\k}$ is called a dual Weyl module in \cite{Green}). Finally, in \cite{Jantzen}, $H^0(\lambda)$ refers to a $GL_n$-module, so the table means that $S_\lambda(\k^n)$ coincides with $H^0(\lambda)$ as a $GL_n$-module. It is obvious that the objects $S_\lambda$, $\mathbb{S}_\lambda$, $L_{\lambda'}$ coincide. To see that $S_\lambda(\k^n)$ coincides with $M(\lambda)$, use the embedding embedding of $M(\lambda)\subset S^\lambda(\k^n)$ \cite[Example (1) p.73]{Martin}, and \cite[Thm II.2.16]{ABW}. Finally, $M(\lambda)$ and $H^0(\lambda)$ coincide by a theorem of James, cf \cite[Thm 3.2.6]{Martin} (see also \cite[Thm 3.4.1]{Martin}). 
\end{remark}

The Weyl functor $W_\lambda$ is the dual of the Schur functor: $W_\lambda=S_{\lambda}^\sharp$. It may be defined as the image of the composite
$$\Gamma^\lambda\hookrightarrow \otimes^d\xrightarrow[]{\sigma_{\lambda'}} \otimes^d\twoheadrightarrow \Lambda^{\lambda'}\;.$$ 
Just as in the case of Schur functors, these functors (or the associated representations) have various notations (and names) depending on the context. For example, $W_\lambda$ is called a coSchur functor in \cite{ABW}, and denoted there by $K_\lambda$ (see \cite[Prop II.4.1]{ABW} for the description as the dual of $S_\lambda$). No notation is used for Weyl functors in \cite{FultonHarris, MacDonald} because over a field of characteristic zero there is an isomorphism $W_\lambda\simeq S_\lambda$ \cite[Exercise 6.14]{FultonHarris}. Here is the conversion table.

\medskip

\begin{center}
\begin{tabular}{c|c|c|c|c|c}
Reference & This article & \cite{ABW} & \cite{Green} & \cite{Martin} &  \cite{Jantzen} \\
\hline
Notation & $W_\lambda$ & $K_\lambda$ & $V_{\lambda,\k}$ & $V(\lambda)$ & $V(\lambda)$
\end{tabular}
\end{center}

\medskip

Finally, in the context of highest weight categories, Schur functors $S_\lambda$ and Weyl functors $W_\lambda$ are respectively called costandard modules and standard modules, and they are often respectively denoted by $\nabla(\lambda)$ and $\Delta(\lambda)$ (although they are respectively denoted by $A(\lambda)$ and $V(\lambda)$ in \cite{CPS}). 
\subsubsection{Formality and d\'ecalages}

A complex $C\in \DD^b(\P_\k)$ is formal if there is an isomorphism $C\simeq H_*(C)$ in $\DD^b(\P_\k)$. The following lemma gives a sufficient condition for the formality of $\Theta^n F$.

\begin{lemma}\label{lm-formal}
Let $F\in\P_{\k}$. Assume that $H^i(\Theta^n(F))=0$ for $i\ne 0$. Then there is an isomorphism in the derived category $$H^0(\Theta^n F)\simeq \Theta^n(F)\;.$$
\end{lemma}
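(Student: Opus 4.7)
The plan is to use the standard truncation argument for complexes whose cohomology is concentrated in a single degree. Let $C = \Theta^n F$; by hypothesis $H^i(C) = 0$ for $i \neq 0$. Since $\k$ is a PID, the cycle subfunctors $Z^i = \ker(d^i_C) \subset C^i$ take values in submodules of finitely generated projective $\k$-modules, and hence are themselves in $\P_\k$; the same holds for the coboundaries $B^i$ (this parallels the reasoning in lemma \ref{lm-caracqis}).

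I would then introduce the good truncation $\tau^{\le 0}C$, namely the complex agreeing with $C$ in strictly negative degrees, equal to $Z^0$ in degree $0$, and zero in positive degrees. It fits into a zigzag of natural chain maps
$$H^0(C)\;\xleftarrow{\;\pi\;}\;\tau^{\le 0}C\;\xrightarrow{\;\iota\;}\;C,$$
where $\iota$ is the termwise inclusion and $\pi$ is the canonical projection $Z^0\twoheadrightarrow Z^0/B^0=H^0(C)$ placed in degree $0$. The hypothesis $H^i(C)=0$ for $i>0$ forces the cofibre of $\iota$ (the complex $0\to C^0/Z^0\hookrightarrow C^1\to C^2\to\cdots$) to be pointwise acyclic on every $V\in\V_\k$, so $\iota$ is a quasi-isomorphism by lemma \ref{lm-caracqis}. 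Symmetrically, the vanishing $H^i(C)=0$ for $i<0$ turns $\tau^{\le 0}C$ into a resolution of $H^0(C)$, so $\pi$ is also a quasi-isomorphism. The morphism $\iota\circ\pi^{-1}$ in $\DD^b(\P_\k)$ is then the desired isomorphism $H^0(\Theta^n F)\simeq \Theta^n F$.

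The only delicate point, and the main obstacle, is ensuring that $H^0(C) = Z^0/B^0$ is genuinely an object of $\P_\k$ --- that is, that its values on $V\in\V_\k$ are finitely generated projective (equivalently, torsion-free) $\k$-modules. Unlike cycles and coboundaries, a quotient of functors in $\P_\k$ may introduce torsion, so this is not automatic from the exactness structure alone. This has to be verified using the vanishing hypothesis itself (for instance by combining $\tau^{\le 0}C$ with a bounded injective coresolution of $H^0(C)$ inside $\widetilde{\P}_\k$, so as to squeeze $H^0(C)$ between two finite complexes in $\P_\k$ and conclude that its values are torsion-free); once this is established, the truncation argument above concludes the proof.
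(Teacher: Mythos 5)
The gap you flag at the end is real, and your proposal does not close it. The paper's proof avoids the issue entirely by making a key observation that you are missing: by construction, $\Theta^n(F)=\RR\H(\Lambda^d,-)$ applied $n$ times is represented by applying $\H(\Lambda^d,-)$ iteratively to an injective coresolution, which produces a complex $C$ with $C^i=0$ for $i<0$. With that representative, $B^0=0$, so $H^0(C)=Z^0$ is a \emph{subobject} of $C^0$, not a quotient. As you yourself point out, kernels present no difficulty over a PID, so $H^0(C)$ is automatically in $\P_\k$ and the inclusion $H^0(C)=Z^0\hookrightarrow C$ (placed in degree zero) is already the desired quasi-isomorphism. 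No zigzag through $\tau^{\le 0}C$ is needed.

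By contrast, starting from an arbitrary representative $C$, the torsion problem you identify is genuine and cannot be brushed aside by a general principle: a bounded complex of finitely generated free $\Z$-modules can perfectly well have its cohomology concentrated in a single degree and yet have torsion there (e.g.\ $\Z\xrightarrow{2}\Z$ in degrees $-1,0$, whose only nonzero cohomology is $\Z/2$ in degree $0$). So your proposed ``squeeze $H^0(C)$ between two finite complexes in $\P_\k$'' argument cannot work as stated; what actually rules out such torsion in the present situation is precisely the vanishing of $C$ in negative degrees. Once you add that observation, your truncation machinery becomes superfluous and collapses to the paper's one-line proof.
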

\begin{proof}
Observe that by construction, $\Theta^n(F)$ is isomorphic to a complex $C$ with $C^i=0$ if $i<0$. Hence we may produce the requested isomorphism as the composite $H^0(F)\simeq H^0(C)\simeq C\simeq \Theta^n(F)$. 
\end{proof}

In particular, if $F$ is a Schur functor, then $H^i(\Theta F)=0$ for $i>0$ by lemma \ref{lm-acyclic}, hence $\Theta(F)$ is formal. Moreover it is easy to compute $H^0(\Theta S_\lambda)$. 
\begin{lemma} Let $\lambda$ be a partition of weight $d$, and let $\k$ be a PID. There is an isomorphism
$H^0(\Theta S_\lambda)=W_{\lambda'}$.
\end{lemma}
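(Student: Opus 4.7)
The plan is to use the $\H(\Lambda^d,-)$-acyclicity of Schur functors (Lemma \ref{lm-acyclic}) to reduce the computation of $\Theta S_\lambda$ to an explicit computation of $\H(\Lambda^d,-)$ applied to the defining map $d_\lambda : \Lambda^{\lambda'} \to S^\lambda$, and then to identify the resulting map with the defining map of $W_{\lambda'}$ via Lemma \ref{lm-comput}.

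First, since $S_\lambda$ is $\H(\Lambda^d,-)$-acyclic by Lemma \ref{lm-acyclic}, we have $\Theta S_\lambda \simeq \H(\Lambda^d, S_\lambda)$ concentrated in cohomological degree $0$, so $H^0(\Theta S_\lambda) = \H(\Lambda^d, S_\lambda)$. To compute this, I would write $S_\lambda$ as the image of $d_\lambda$, so that we obtain two short exact sequences in $\P_{d,\k}$:
\begin{equation*}
0 \to K \to \Lambda^{\lambda'} \to S_\lambda \to 0, \qquad 0 \to S_\lambda \to S^\lambda \to Q \to 0,
\end{equation*}
where $K=\ker(d_\lambda)$ and $Q=\mathrm{coker}(d_\lambda)$. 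Since $\Lambda^{\lambda'}$, $S_\lambda$ and $S^\lambda$ are all $\H(\Lambda^d,-)$-acyclic (Lemma \ref{lm-acyclic} and injectivity of $S^\lambda$), the long exact sequences of $\RR\H(\Lambda^d,-)$ force $K$ and $Q$ to be $\H(\Lambda^d,-)$-acyclic as well. Therefore $\H(\Lambda^d,-)$ applied to each sequence remains short exact, and $\H(\Lambda^d, S_\lambda)$ is precisely the image of $\H(\Lambda^d, d_\lambda) : \H(\Lambda^d, \Lambda^{\lambda'}) \to \H(\Lambda^d, S^\lambda)$.

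Next I would unwind $\H(\Lambda^d, d_\lambda)$ using Lemma \ref{lm-comput}. By parts (ii) and (iii) of that lemma, $\H(\Lambda^d, \Lambda^{\lambda'}) = \Gamma^{\lambda'}$, with the inclusion $\Lambda^{\lambda'}\hookrightarrow \otimes^d$ sent to the comultiplication $\Gamma^{\lambda'}\hookrightarrow \otimes^d$, and $\H(\Lambda^d, S^\lambda) = \Lambda^\lambda$, with the multiplication $\otimes^d\twoheadrightarrow S^\lambda$ sent to the multiplication $\otimes^d \twoheadrightarrow \Lambda^\lambda$. By part (i), the middle automorphism $\sigma_\lambda$ of $\otimes^d$ is sent to $\epsilon(\sigma_\lambda)\sigma_\lambda$. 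Hence $\H(\Lambda^d, d_\lambda)$ equals $\epsilon(\sigma_\lambda)$ times the composite
\begin{equation*}
\Gamma^{\lambda'} \hookrightarrow \otimes^d \xrightarrow{\ \sigma_\lambda\ } \otimes^d \twoheadrightarrow \Lambda^\lambda,
\end{equation*}
which, since $(\lambda')'=\lambda$, is (up to sign) the defining composite of $W_{\lambda'}$. The sign does not affect the image, so $\H(\Lambda^d, S_\lambda) \simeq W_{\lambda'}$, and the lemma follows.

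The main subtlety lies in justifying that $\H(\Lambda^d,-)$ preserves the image factorization of $d_\lambda$; once the acyclicity of $K$ and $Q$ is established via the long exact sequences, the rest is a straightforward translation through Lemma \ref{lm-comput}.
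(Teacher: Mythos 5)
Your proposal has a genuine gap at the step where you conclude that the kernel $K=\ker(\Lambda^{\lambda'}\twoheadrightarrow S_\lambda)$ is $\H(\Lambda^d,-)$-acyclic. From the short exact sequence $0\to K\to\Lambda^{\lambda'}\to S_\lambda\to 0$ and the acyclicity of $\Lambda^{\lambda'}$ and $S_\lambda$, the long exact sequence only gives $R^i\H(\Lambda^d,K)=0$ for $i\ge 2$; in degree one it gives
\[
R^1\H(\Lambda^d,K)\;\simeq\;\mathrm{coker}\bigl(\H(\Lambda^d,\Lambda^{\lambda'})\to\H(\Lambda^d,S_\lambda)\bigr),
\]
which is not automatically zero. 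In fact this surjectivity is \emph{equivalent} to what you are trying to prove, namely that $\H(\Lambda^d,S_\lambda)$ coincides with the image of $\H(\Lambda^d,d_\lambda)$, so the argument is circular at this point. (The corresponding claim for $Q$ on the other side is fine; the problem is specifically the kernel side.) One could repair this using the good-filtration machinery of highest weight categories --- the kernel of a surjection between objects with costandard filtration again has a costandard filtration, hence is $\H(\Lambda^d,-)$-acyclic --- but that is extra input you have not invoked.

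The paper sidesteps this entirely by choosing a \emph{kernel} presentation instead of an image presentation. From \cite[Thm II.2.16, Thm II.3.16]{ABW} and duality, $S_\lambda$ is the kernel of an explicit map $\Lambda^\lambda\to\bigoplus_\mu\Lambda^\mu$ built from multiplications and comultiplications of exterior powers. Since $\H(\Lambda^d,-)$ is left exact, it preserves this kernel with no acyclicity hypothesis on any auxiliary object; Lemma~\ref{lm-comput} then identifies the resulting map $\Gamma^\lambda\to\bigoplus_\mu\Gamma^\mu$ with the dual of the ABW presentation defining $W_{\lambda'}$. The acyclicity from Lemma~\ref{lm-acyclic} is used only to identify $\H(\Lambda^d,S_\lambda)$ with $H^0(\Theta S_\lambda)$, not to compute the internal $\hom$. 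If you want to keep your strategy, the cleanest fix is to replace your two short exact sequences by a left-exact presentation of $S_\lambda$ by tensors of exterior powers, exactly as the paper does.
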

\begin{proof}Let us fix a partition $\lambda=(\lambda_1,\dots,\lambda_m)$.
It is proved in \cite[Thm II.2.16]{ABW} that the Schur functor $S_{\lambda'}$ associated to the conjugate of $\lambda$ is the cokernel of the map
$[]_{\lambda}:\bigoplus_{\mu}\Lambda^\mu\to\Lambda^{\lambda} $, where the sum is taken over all tuples of positive integers of the form $(\lambda_1,\dots,\lambda_i+k,\lambda_{i+1}-k,\dots, \lambda_n)$ for all $1\le i\le n-1$ and all $1\le k\le \lambda_{i+1}$ and the restriction of $[]_{\lambda}$ to the summand indexed by the tuple $(\lambda_1,\dots,\lambda_i+k,\lambda_{i+1}-k,\dots, \lambda_n)$ is built by tensoring identities with the composite 
$$ \Lambda^{\lambda_{i+1}-k}\otimes \Lambda^{\lambda_{i+1}-k}\xrightarrow[]{\mathrm{comult}\otimes 1}\Lambda^{\lambda_{i+1}}\otimes \Lambda^k\otimes \Lambda^{\lambda_{i+1}-k}
\xrightarrow[]{1\otimes \mathrm{mult}}\Lambda^{\lambda_{i+1}}\otimes \Lambda^{\lambda_{i+1}}\;.$$
Hence $W_{\lambda'}$ is the kernel of the map $[]_{\lambda}^\sharp:\Lambda^{\lambda}\to \bigoplus_{\mu}\Lambda^\mu$. Similarly, it is proved in \cite[Thm II.3.16]{ABW} that the Weyl functor $W_\lambda$ is the cokernel of a similar map $[]'_\lambda:\bigoplus_\mu \Gamma^\mu\to\Gamma^\lambda$, hence $S_\lambda$ is the kernel of the map $([]'_\lambda)^\sharp$. Now tensors of exterior powers are $\H(\Lambda^d,-)$-acyclic, so 
$$H^0(\Theta S_\lambda)\simeq\ker(\H(\Lambda^d,([]_{\lambda'})^\sharp)=\ker (([]_{\lambda'})^\sharp)=W_{\lambda'}.$$
\end{proof}

So we have an isomorphism $W_{\lambda'}\simeq \Theta S_\lambda$ in $\DD^b(\P_{d,\k})$. Thus, theorem \ref{thm-main} yields the following result.
\begin{proposition}\label{prop-decalage}
Let $\k$ be a PID and let $\lambda$ be a partition of weight $d$ and $\lambda'$ be the dual partition. There are d\'ecalage isomorphisms (for all integers $i$ and $n$): 
$$L_iW_{\lambda'}(V;n)\simeq L_{i+d}S_\lambda(V;n+1)\;.$$
\end{proposition}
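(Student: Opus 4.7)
The plan is to combine the identification $W_{\lambda'} \simeq \Theta S_\lambda$ in $\DD^b(\P_{d,\k})$ established in the preceding lemma with the main theorem, exploiting the fact that conjugate partitions have the same weight so that $W_{\lambda'}$ and $S_\lambda$ both live in $\P_{d,\k}$.

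First I would apply $\Theta^n$ to the isomorphism $W_{\lambda'} \simeq \Theta S_\lambda$ in $\DD^b(\P_{d,\k})$; since $\Theta$ is a well-defined triangle functor on the derived category, this gives an isomorphism
$$\Theta^n W_{\lambda'} \simeq \Theta^{n+1} S_\lambda$$
in $\DD^b(\P_{d,\k})$. In particular, their cohomology functors agree: $H^j(\Theta^n W_{\lambda'}) \simeq H^j(\Theta^{n+1} S_\lambda)$ for every $j$.

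Next I would translate each side into derived functors in the sense of Dold--Puppe by invoking Corollary \ref{cor-1}. Applied to the degree $d$ functor $W_{\lambda'}$ at height $n$, it gives
$$H^j(\Theta^n W_{\lambda'})(V) \simeq L_{nd-j} W_{\lambda'}(V;n),$$
while applied to the degree $d$ functor $S_\lambda$ at height $n+1$ it gives
$$H^j(\Theta^{n+1} S_\lambda)(V) \simeq L_{(n+1)d - j} S_\lambda(V;n+1).$$
Setting $i := nd - j$, so that $j = nd - i$ and $(n+1)d - j = d + i$, and chaining the three isomorphisms yields the asserted d\'ecalage
$$L_i W_{\lambda'}(V;n) \simeq L_{i+d} S_\lambda(V;n+1).$$

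There is essentially no obstacle beyond bookkeeping: the whole argument is a direct combination of the formality lemma for Schur functors, functoriality of $\Theta$ on $\DD^b(\P_{d,\k})$, and the indexing convention $H^i(\Theta^n F) \simeq L_{nd-i}F(-;n)$ from Corollary \ref{cor-1}. The only point that must be checked is that both $S_\lambda$ and $W_{\lambda'}$ are homogeneous of the \emph{same} degree $d$, which holds since $|\lambda| = |\lambda'| = d$; this is what makes the shift between the homological degrees $i$ and $i + d$ come out correctly when comparing heights $n$ and $n+1$.
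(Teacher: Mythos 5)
Your proposal is correct and follows essentially the same route as the paper: it combines the formality isomorphism $W_{\lambda'}\simeq\Theta S_\lambda$ in $\DD^b(\P_{d,\k})$ (from the two lemmas just before) with the main theorem via Corollary \ref{cor-1}, and the index bookkeeping $i=nd-j$ is carried out correctly. The paper leaves that arithmetic implicit with the phrase ``Thus, theorem \ref{thm-main} yields the following result,'' so you have simply spelled out what the paper compresses.
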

This proposition generalizes the d\'ecalage isomorphisms of Quillen \cite{Quillen2} and Bousfield \cite{Bousfield}:
$$L_i\Lambda^d(V;n)\simeq L_{i+d}S^d(V;n+1)\;,\quad L_i\Gamma^d(V;n)\simeq L_{i+d}\Lambda^d(V;n+1) $$
(These isomorphisms correspond to the cases $\lambda=(d)$ and $\lambda=(1,\dots,1)$). It also generalizes a result of Bott \cite{Bott}, who computed derived functors of Schur functors in characteristic zero. Indeed, in characteristic zero $W_\lambda\simeq S_{\lambda}$ so proposition \ref{prop-decalage} yields the following result.
\begin{corollary}[\cite{Bott}]
If $\k$ is a field of characteristic zero, the derived functors of a Schur functor indexed by a partition $\lambda$ of weight $d$ are given by:
$$L_*(S_\lambda;n)\simeq S_{\lambda'}[-nd]\text{ if $n$ is odd, and }L_*(S_\lambda;n)\simeq S_{\lambda}[-nd]\text{ if $n$ is even.}$$
\end{corollary}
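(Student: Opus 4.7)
The proof amounts to iterating the décalage isomorphism of proposition \ref{prop-decalage}. The only place where the characteristic zero hypothesis enters is through the classical fact that over a field of characteristic zero every Schur functor $S_\mu$ agrees with the Weyl functor $W_\mu$ (equivalently, $\P_{d,\k}$ is semisimple, or one can invoke \cite[Exercise 6.14]{FultonHarris}).

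Substituting $W_{\lambda'} \simeq S_{\lambda'}$ into proposition \ref{prop-decalage} and renaming $\mu := \lambda'$ (so that $\mu' = \lambda$) produces, for every partition $\mu$ of weight $d$, every integer $i$, and every $n \ge 0$, a natural isomorphism
$$L_i S_\mu(V;n) \simeq L_{i+d} S_{\mu'}(V;n+1).$$
Hence one step of décalage conjugates the partition and shifts the homological degree by $d$.

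I would then run an induction on $n$ from the trivial base case $n=0$: for $V\in\V_\k$ the module $V$ is itself projective, so $K(0)\otimes V$ is the constant simplicial object with value $V$ and its unnormalized chain complex has homology $S_\mu(V)$ concentrated in degree zero; that is $L_*S_\mu(V;0)\simeq S_\mu[0]$. Iterating the displayed isomorphism $n$ times, and using $\mu''=\mu$, shows that $L_*S_\lambda(V;n)$ is concentrated in homological degree $nd$ and equals $S_\lambda$ when $n$ is even and $S_{\lambda'}$ when $n$ is odd. Translating the concentration in degree $nd$ into the complex notation $[-nd]$ gives exactly the statement of the corollary.

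I do not anticipate any real obstacle. The characteristic-zero input is used only once, through the equality $W_\mu=S_\mu$, and the remainder is a short induction that is automatic given that the décalage isomorphism of proposition \ref{prop-decalage} is already natural in $V$.
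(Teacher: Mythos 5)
Your proof is correct and is essentially the paper's own argument: the paper also derives the corollary by iterating Proposition \ref{prop-decalage} and invoking $W_\mu \simeq S_\mu$ in characteristic zero, merely leaving the induction and the $n=0$ base case implicit. Nothing to add.
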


\subsection{Plethysms}

The study of plethysms, i.e. representations given by composites of functors, is a hard problem of representation theory (even over a field of characteristic zero, see \cite[Chap I]{MacDonald}). For example, the composition series of the functor $F\circ G$ is usually unknown, even when the functors $F$ and $G$ are well understood. In this section, we our simplicial model for iterated Ringel duality, namely:
$$\Theta^n C [-dn]\simeq \MC C_{K(n)}$$
to give explicit formulas for Ringel duals of plethysms.
\subsubsection{Composition by Frobenius twists.} Let $\k$ be a field of positive characteristic $p$. The $r$-th Frobenius twist functor $I^{(r)}$ is the subfunctor of $S^{p^r}$ generated by $p^r$-th powers:
$$I^{(r)}(V):=V^{(r)}=\langle v^{p^r}\;|\; v\in V\rangle \subset S^{p^r}(V)\;. $$
Plethysms of the form $F\circ I^{(r)}$ and $I^{(r)}\circ F$ are of central importance for the representation theory over finite fields, see e.g. \cite{CPSVdK, FS, FFSS, TVdK}. The following proposition was first proved in \cite{Chalupnik2}, relying on the computations of \cite{FFSS} and \cite{Chalupnik1}. With our simplicial model, we can give an elementary proof.
\begin{proposition}[{compare \cite[Prop 2.6]{Chalupnik2}}]\label{prop-chal}
Let $\k$ be a field of positive characteristic $p$. There are isomorphisms, natural with respect to $C\in\DD^b(\P_{d,\k})$:
\begin{align*}
\text{(i)}&& \Theta^n(C\circ I^{(r)}) \simeq  \left( \Theta^n(C)\circ I^{(r)} \right) [dn(p^r-1)]\;, \\
\text{(ii)}&& \Theta^n(I^{(r)}\circ C) \simeq  \left( I^{(r)}\circ\Theta^n(C) \right) [dn(p^r-1)]\;.
\end{align*}
\end{proposition}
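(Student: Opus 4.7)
My plan is to use Theorem~\ref{thm-main} to reduce both isomorphisms to statements about the derivation functor $L(-;n)$. Since $\Theta^n\simeq\Sigma^{-n}\circ L(-;n)$ and $\Sigma^{-n}$ shifts a complex of strict polynomial degree $d$ by $[nd]$, we have $\Theta^n(F)\simeq L(F;n)[nd]$ for any homogeneous $F$ of degree $d$. In (i) and (ii), both sides therefore carry a total shift of $[ndp^r]$, and the explicit $[dn(p^r-1)]$ appearing in the statement is precisely the difference $ndp^r - nd$ that relates the shift on $\Theta^n(C)$ (degree $d$) to the shift on $\Theta^n(C\circ I^{(r)})$ or $\Theta^n(I^{(r)}\circ C)$ (degree $dp^r$). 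After cancelling the shifts, (i) and (ii) respectively reduce to
\[
L(C\circ I^{(r)};n)\simeq L(C;n)\circ I^{(r)},\qquad L(I^{(r)}\circ C;n)\simeq I^{(r)}\circ L(C;n).
\]

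The key inputs are two characteristic-$p$ properties of the Frobenius twist: (a) $I^{(r)}$ is the identity on underlying abelian groups, hence $I^{(r)}(f+g)=I^{(r)}(f)+I^{(r)}(g)$ for parallel morphisms of $\k$-modules and $I^{(r)}(V\oplus W)=I^{(r)}(V)\oplus I^{(r)}(W)$ naturally; and (b) the base-change isomorphism $I^{(r)}(X\otimes W)\simeq I^{(r)}(X)\otimes I^{(r)}(W)$, natural in both variables. Property~(a) ensures that both left- and right-composition with $I^{(r)}$ are additive endofunctors on complexes of strict polynomial functors which commute with the direct sums and signed sums of face operators, hence with the mixed-chain functor $\MC$.

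For the second isomorphism, the identity $(I^{(r)}\circ C)_X = I^{(r)}\circ C_X$ on parameterized functors is immediate from the definitions; substituting $X=K(n)$ and applying $\MC$ gives $L(I^{(r)}\circ C;n) = I^{(r)}\circ L(C;n)$ on the nose. For the first isomorphism, property~(b) yields a natural identification $(C\circ I^{(r)})_X\simeq C_{I^{(r)}(X)}\circ I^{(r)}$ of parameterized functors. To finish the first case I will observe that $I^{(r)}(K(n)) = K(n)$ as simplicial $\k$-modules: the structure maps of $K(n)=\K(\k[-n])$ are given by $\{0,1\}$-matrices coming from the combinatorics of $\Delta$, and $I^{(r)}$ fixes such matrices since $0^{p^r}=0$ and $1^{p^r}=1$. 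Substituting $X=K(n)$ and applying $\MC$ then yields $L(C\circ I^{(r)};n)\simeq L(C;n)\circ I^{(r)}$.

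The main delicate point is the degree and shift bookkeeping under $\Sigma^{-n}$ and the accounting of the shift $[dn(p^r-1)]$; once this is correctly tracked, the remainder is a formal manipulation using the properties of $I^{(r)}$ above and the naturality of $\MC$ in the simplicial parameter.
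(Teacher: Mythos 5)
Your proposal is essentially correct and follows the same strategy as the paper: reduce via the main theorem to a statement about $L(-;n)$, use $(I^{(r)}\circ C)_X = I^{(r)}\circ C_X$ for (ii), use $(C\circ I^{(r)})_X\simeq C_{I^{(r)}(X)}\circ I^{(r)}$ for (i), and then identify $I^{(r)}(K(n))$ with $K(n)$. Your shift bookkeeping with $\Sigma^{-n}$ is also correct and matches the shifts appearing in the paper's formula.

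The one genuine difference is how the identification of $I^{(r)}(K(n))$ with $K(n)$ is obtained. You argue that this holds \emph{on the nose} because the structure maps of $\K(\k[-n])$ have matrix entries in the prime field (hence are Frobenius-fixed); the paper instead proves only a \emph{simplicial homotopy equivalence} $K(n)^{(r)}\simeq K(n)$, using that $I^{(r)}$ is additive hence exact on $\k$-vector spaces, that $\C K(n)^{(r)}$ therefore has homology $\k$ concentrated in degree $n$, and then invoking the Dold--Kan correspondence; since derivation of complexes preserves homotopy equivalences, this suffices. Your argument is more concrete and gives a stronger conclusion, but depends on the explicit combinatorial model of $\K(\k[-n])$ (and you should phrase it as a canonical natural isomorphism rather than a literal equality, since $I^{(r)}(V)$ is by definition a submodule of $S^{p^r}(V)$, not $V$ itself); two small corrections: the entries may include $-1$ as well as $0,1$ (the inverse Dold--Kan formula carries signs), but $(-1)^{p^r}=-1$ so this does not affect the argument. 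The paper's route avoids inspecting the model at all, trading a few extra lines for robustness. Both are valid; yours even yields a marginally cleaner statement in (i) at the cost of the explicit matrix check.
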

\begin{proof}
We shall prove (i), the proof of the second statement is similar. Since $I^{(r)}(V\otimes W)\simeq I^{(r)}(V)\otimes I^{(r)}(W)$ we have isomorphisms of mixed complexes:
$$(C\circ I^{(r)})_{K(n)}\simeq (C_{K(n)^{(r)}})\circ I^{(r)}\;.$$
Now $I^{(r)}(\k)\simeq \k$ and $I^{(r)}$ is additive, hence exact as a functor from $\k$-vector spaces to $\k$-vector spaces, so the complex of $\k$-vector spaces $\C K(n)^{(r)}$ has homology concentrated in homological degree $n$ and equal to $\k$ in this degree. So we have a quasi-isomorphism of complexes of $\k$-vector spaces $\N K(n)^{(r)}\simeq \k[-n]$, hence a homotopy equivalence between these complexes of $\k$-vector spaces. Now the Dold-Kan correspondence ensures that we have a homotopy equivalence $K(n)^{(r)}\simeq K(n)$ of simplicial $\k$-vector spaces. Thus the complexes of strict polynomial functors $\MC (C_{K(n)^{(r)}})$ and $\MC (C_{K(n)})$ are homotopy equivalent. Hence:
$$\Theta^n(C\circ I^{(r)})[-np^rd]\simeq \left(\MC(C_{K(n)})\right)\circ I^{(r)})\simeq  \left( \Theta^n(C)\circ I^{(r)} \right)[-nd]\;.$$
This finishes the proof of proposition \ref{prop-chal}.
\end{proof}

\subsubsection{Plethysm under a vanishing condition}
Now we go back to the case of an arbitrary PID $\k$. The following theorem provides an efficient way to compute Ringel duals of many plethysms.

\begin{theorem}\label{thm-pleth}Let $\k$ be a PID.
Let $G\in \P_{d,\k}$ and let $n$ be a positive integer such that $G$ satisfies the following vanishing condition 
$$ H^i(\Theta^n G)=0 \quad\text{for all $i>0$}\;.$$ 
Then for all $F\in \P_{\k}$, there is an isomorphism (in the derived category), natural with respect to $F$ and $G$:
$$\Theta^{n}(F\circ G)\simeq \Theta^{nd}(F)\circ H^0(\Theta^n G) \;.$$
\end{theorem}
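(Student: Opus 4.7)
Write $H := H^0(\Theta^n G)$, and let $e$ be such that $F\in\P_{e,\k}$ (so $F\circ G$ has degree $de$). My strategy is to translate the claim via Theorem \ref{thm-main} and identify both sides using the simplicial model for iterated Ringel duality. Applying the main theorem to $F\circ G$ and to $F$ gives $\Theta^n(F\circ G)\simeq L(F\circ G;n)[nde]$ and $\Theta^{nd}(F)\simeq L(F;nd)[nde]$. Since shifts commute with pre-composition with $H$, the conclusion reduces to producing a natural isomorphism
$$L(F\circ G;n)\;\simeq\;L(F;nd)\circ H \qquad\text{in } \DD^-(\widetilde{\P}_{de,\k}). \qquad (\star)$$

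Next, I describe both sides as the chain complex associated to a simplicial $\k$-module obtained by applying $F$ levelwise. Since composition commutes with parameterization---$(F\circ G)_X = F\circ G_X$---the functor $(F\circ G)_{K(n)}$ equals $F$ applied levelwise to $G_{K(n)}$, so $L(F\circ G;n)(V)=\MC F(G_{K(n)}(V))$. On the right-hand side, isomorphism $(**)$ yields $K(nd)\otimes H(V)=\K(H(V)[-nd])$, whence $(L(F;nd)\circ H)(V)=\MC F(\K(H(V)[-nd]))$.

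The vanishing assumption together with Lemma \ref{lm-formal} gives $\Theta^n G\simeq H$ in $\DD^b(\widetilde{\P}_{d,\k})$, and the main theorem translates this to $L(G;n)\simeq H[-nd]$. Hence, for each $V$, the degreewise-free simplicial $\k$-module $G_{K(n)}(V)$ and the simplicial $\k$-module $\K(H(V)[-nd])$ have the same Dold--Puppe homotopy type (normalized chain complexes both quasi-isomorphic to $H(V)[-nd]$). Because a non-additive functor applied levelwise preserves simplicial homotopies between degreewise-projective simplicial $\k$-modules, the invariance of Dold--Puppe derived functors under change of simplicial projective model yields a pointwise quasi-isomorphism $\MC F(G_{K(n)}(V))\simeq \MC F(\K(H(V)[-nd]))$. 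To promote this to a natural isomorphism in $\DD^-(\widetilde{\P}_{de,\k})$, I would choose a functorial free resolution $P(V)\to H(V)$ in $\widetilde{\P}_{d,\k}$ (available since $\k$ is a PID, so of length $\le 1$), form $\K(P[-nd])$, and build an explicit natural zigzag of degreewise-free simplicial strict polynomial functors between $G_{K(n)}$ and $\K(P[-nd])$ via the Dold--Kan correspondence; applying $\MC F$ then produces $(\star)$.

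The hard part will be this final naturality step: for each fixed $V$, Dold--Kan immediately provides a simplicial homotopy equivalence, but lifting it to a natural zigzag of simplicial strict polynomial functors requires replacing $H$ by a functorial free resolution (to accommodate possible torsion of $H(V)$) and translating a chain-complex zigzag into a simplicial one. Once this is in place the non-additivity of $F$ causes no further trouble, since $F$ applied levelwise preserves simplicial homotopies of maps between degreewise-free simplicial $\k$-modules.
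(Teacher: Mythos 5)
Your proposal follows essentially the same route as the paper's proof: reduce via Theorem \ref{thm-main} to a statement about $L(-;n)$, use Lemma \ref{lm-formal} to identify $L(G;n)$ with $H[-nd]$ in the derived category, pass to simplicial models via Dold--Kan, observe that a non-additive functor applied levelwise preserves simplicial homotopy equivalences between degreewise projective simplicial modules (this is precisely the content of the paper's Lemma \ref{lm-we}), and then apply $\MC F$. The identification $\K(H[-nd])\simeq K(nd)\otimes H$ and the fact that $(F\circ G)_{K(n)}=F\circ(G_{K(n)})$ are used in exactly the same way.

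One point in the final paragraph is off, though it does not derail the strategy. You worry about ``possible torsion of $H(V)$'' and propose a $\k$-module-level free resolution of length $\le 1$. But $H=H^0(\Theta^n G)$ lies in $\P_{d,\k}$, so $H(V)$ is a finitely generated projective $\k$-module for every $V$; there is no torsion to accommodate, and a $\k$-linear resolution of the values is not what is needed. The genuine obstruction in the naturality step is different: the isomorphism $H[-nd]\simeq \N G_{K(n)}$ lives in $\DD^b(\P_{d,\k})$, and to feed it into the levelwise-$F$ machinery you must represent it by an honest zigzag of chain maps
$$H[-nd]\longleftarrow P \longrightarrow \N G_{K(n)}$$
in $\Ch_{\ge 0}(\P_{d,\k})$, where $P$ is a bounded complex of \emph{projective objects of the functor category} $\P_{d,\k}$ (not of projective $\k$-modules). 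Such a $P$ exists and can be taken nonnegatively graded because $\P_{d,\k}$ has enough projectives and finite homological dimension when $\k$ is a PID. Applying $\K$ to this zigzag gives the natural zigzag of degreewise-projective simplicial strict polynomial functors that you are after, after which Lemma \ref{lm-we} finishes the argument exactly as you describe. So: same approach, but the source of the zigzag is a projective resolution in $\P_{d,\k}$, not a functorial $\k$-module resolution of the values.
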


Before we prove theorem \ref{thm-pleth}, we examine the statement and give a few consequences.
A lot of functors satisfy the vanishing condition appearing in theorem \ref{thm-pleth}. Let us give a list of examples.
\begin{enumerate}
\item If $n=1$, the functors satisfying the vanishing condition are the $\H(\Lambda^d,-)$-acyclic functors. Hence by lemma \ref{lm-acyclic}, they include symmetric powers, exterior powers, and more generally Schur functors. 
Sums, tensor products, or direct summands of functors satisfying the vanishing condition also satisfy it, as well as filtered functors whose graded pieces satisfy the vanishing condition\footnote{By the equivalence of categories $\P_{d,\k}\simeq S(n,d)\text{-mod}$, a functor $F$ is equivalent to a $S(n,d)$-module which is free of finite rank as a $\k$-module. Hence a filtration of $F$ must be finite. To prove the vanishing condition for $F$ from the vanishing condition for $\mathrm{Gr}(F)$, use the long exact sequences induced by short exact sequences at the level of $\Ext$s.}. For example, plethysms of the form $S^k\circ S^2$ and $S^k\circ \Lambda^2$ have a filtration whose graded pieces are Schur functors by a result of Boffi \cite{Boffi}, hence they satisfy the vanishing condition.
\item If $n=2$, the functors satisfying the vanishing condition are the $\H(S^d,-)$-acyclic functors. Hence they include tensor products of symmetric powers and more generally injective functors.
\item Tensor powers satisfy the vanishing condition for all $n$ (indeed, $\Theta^n (\otimes^d)\simeq\otimes^d$ by example \ref{exemple}). If the ground ring $\k$ is a field of positive characteristic, functors of degree strictly less than the characteristic  also satisfy the vanishing condition since they are direct summands of tensor powers.
\end{enumerate}
If $n$ equals $1$ or $2$, the homology of $\Theta^n(F\circ G)$ may be interpreted as extension groups. So we deduce from theorem \ref{thm-pleth} the following $\Ext$-computations involving plethysms. As usual, we denote by $\E^i(E,F\circ G)$ the functor assigning to $V$ the extension groups $\Ext^i_{\P_\k}(E^V, F\circ G)$. In particular, $\E^i(E,F\circ G)(\k)$ equals $\Ext^i_{\P_\k}(E, F\circ G)$.
\begin{corollary}\label{cor-Exthigher} Let $\k$ be a PID, and let $F\in \P_{e,\k}$. The following isomorphisms hold.
\begin{align}
&\E^*(S^{de}, F\circ \otimes^d)\simeq H^*(\Theta^{2d} F)\circ \otimes^d  \\
&\E^*(S^{de}, F\circ S^d)\simeq H^*(\Theta^{2d} F)\circ \Gamma^d\\
&\E^*(\Lambda^{de}, F\circ \otimes^d)\simeq H^*(\Theta^d F)\circ \otimes^d \\
&\E^*(\Lambda^{de}, F\circ S_\lambda)\simeq H^*(\Theta^d F)\circ W_{\lambda'}\label{eqn}
\end{align}
In particular, if $\lambda$ is not the partition $(1,\dots,1)$, that is if $S_\lambda$ is not an exterior power, then $W_{\lambda'}(\k)=0$ so that 
$$\Ext^*_{\P_\k}(\Lambda^{de}, F\circ S_\lambda)=0\;.$$
And if $S_\lambda=\Lambda^e$ is an exterior power, then $W_{\lambda'}(\k)=\Gamma^e(\k)=\k$ so that
$$\Ext^*_{\P_\k}(\Lambda^{de}, F\circ \Lambda^d)=H^*(\Theta^d F)(\k)\;.$$
\end{corollary}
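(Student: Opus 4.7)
The strategy is to apply Theorem \ref{thm-pleth} four times with judicious choices of the functor $G$ and the integer $n$, then translate each resulting isomorphism in $\DD^b(\P_{de,\k})$ into an identification of $\Ext$ groups by means of the interpretations $H^i(\Theta C) \simeq \E^i(\Lambda^d,C)$ and $H^i(\Theta^2 C) \simeq \E^i(S^d,C)$ that follow the statement of Corollary \ref{cor-1}. In each of the four cases, the first task is to verify the hypothesis that $H^i(\Theta^n G) = 0$ for $i>0$, and then to compute $H^0(\Theta^n G)$: for $(G,n)=(\otimes^d,1)$ and $(S_\lambda,1)$ the vanishing is the $\H(\Lambda^d,-)$-acyclicity provided by Lemma \ref{lm-acyclic}, with $H^0(\Theta(\otimes^d))=\otimes^d$ (Example \ref{exemple}) and $H^0(\Theta S_\lambda)=W_{\lambda'}$ (the computation just above Proposition \ref{prop-decalage}); for $(\otimes^d,2)$ one iterates $\Theta(\otimes^d)\simeq\otimes^d$ to obtain $\Theta^2(\otimes^d)\simeq\otimes^d$; and for $(S^d,2)$ one uses Lemmas \ref{lm-acyclic} and \ref{lm-comput} to obtain $\Theta(S^d)\simeq\Lambda^d$ and then $\Theta^2(S^d)\simeq\Theta(\Lambda^d)\simeq\Gamma^d$, concentrated in degree zero.

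Once these inputs are in place, Theorem \ref{thm-pleth} immediately yields isomorphisms in $\DD^b(\P_{de,\k})$:
\begin{align*}
\Theta^2(F\circ \otimes^d) &\simeq \Theta^{2d}(F)\circ \otimes^d, & \Theta^2(F\circ S^d) &\simeq \Theta^{2d}(F)\circ \Gamma^d,\\
\Theta(F\circ \otimes^d) &\simeq \Theta^d(F)\circ \otimes^d, & \Theta(F\circ S_\lambda) &\simeq \Theta^d(F)\circ W_{\lambda'}.
\end{align*}
The second step is to apply $H^*$ to both sides. Cohomology is computed objectwise, so for any (single) functor $H$ one has $H^*(\Theta^{nd}(F)\circ H)=H^*(\Theta^{nd}(F))\circ H$ (no exactness of $H$ is required, as it appears only as the input of an objectwise evaluation). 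This yields the four stated identifications of $H^*(\Theta^n(F\circ G))$ with $H^*(\Theta^{nd}F)\circ H^0(\Theta^n G)$. Finally, since $F\circ G \in \P_{de,\k}$ whenever $F\in\P_{e,\k}$ and $G\in\P_{d,\k}$, the general formulas $H^i(\Theta C)\simeq \E^i(\Lambda^{de},C)$ and $H^i(\Theta^2 C)\simeq \E^i(S^{de},C)$ convert the left-hand sides into the claimed $\E^*$-groups, giving the four displayed isomorphisms.

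For the concluding consequences, evaluate the identification \eqref{eqn} at $V=\k$: the Weyl functor $W_\mu$ satisfies $W_\mu(\k)=0$ whenever $\mu$ has more than one part, as follows from its description as a submodule of $\Gamma^\mu=\bigotimes_i\Gamma^{\mu_i}$ (or equivalently from $W_\mu(\k^n)=0$ when $\ell(\mu)>n$). Hence $W_{\lambda'}(\k)=0$ unless $\lambda'$ is the one-row partition $(d)$, i.e.\ unless $\lambda=(1^d)$, in which case $S_\lambda=\Lambda^d$, $W_{\lambda'}=\Gamma^d$, and $W_{\lambda'}(\k)=\k$; substituting gives the two corollaries. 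The only step requiring any care is the verification that passing to cohomology commutes with post-composing by the fixed functor on the right, but as remarked this reduces to the evident fact that $H^*$ is computed objectwise, so there is no real obstacle in the argument.
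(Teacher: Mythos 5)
Your proof is correct and matches the approach the paper intends: the paper states the corollary immediately after Theorem \ref{thm-pleth} with the remark that for $n=1,2$ the homology of $\Theta^n(F\circ G)$ can be read off as $\Ext$ groups, and you spell out exactly which pairs $(G,n)$ to use, verify the required vanishing hypotheses, compute $H^0(\Theta^n G)$ from Example \ref{exemple} and Lemma \ref{lm-comput}, and note the (objectwise, hence harmless) commutation of $H^*$ with post-composition. One small factual slip in your final paragraph: $W_\mu$ is defined as a \emph{quotient} of $\Gamma^\mu$ (and equivalently a \emph{submodule} of $\Lambda^{\mu'}$), not a submodule of $\Gamma^\mu$ --- and it is the latter description one should invoke, since $\Lambda^{\mu'}(\k)=0$ whenever some $\mu'_i\geq 2$, i.e.\ whenever $\mu$ has more than one part; the parenthetical fact you cite ($W_\mu(\k^n)=0$ for $\ell(\mu)>n$) also gives the desired vanishing and is the cleaner justification.
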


\begin{remark} Corollary \ref{cor-Exthigher} shows that iterated Ringel duals $\Theta^d(F)$ for arbitrary large $d$ also have an interpretation in terms of extension groups in the category of strict polynomial functors.
\end{remark}

We finish by an application of the formulas of corollary \ref{cor-Exthigher}. In \cite{Boffi}, Boffi proves that the plethysm $S^k\circ S^2$ and $S^k\circ \Lambda^2$ have a universal filtration, that is a filtration defined over the ground ring $\Z$, whose graded pieces are Schur functors. Then he notices that `there is little hope' that plethysms of the form $S^k\circ S^d$ or $S^k\circ \Lambda^d$, for $d>2$ have similar universal filtrations and proves that this indeed fails for $d=3$. With the formulas of corollary \ref{cor-Exthigher}, we are able to prove the general non-existence result.
\begin{corollary}
Let $\k=\Z$ be the ground ring.
For $d>2$ and for $k\ge 2$, the plethysms $S^k\circ S^d$ and $S^k\circ \Lambda^d$ have no filtration whose graded pieces are Schur functors.
\end{corollary}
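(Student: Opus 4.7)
The plan is to exhibit a test functor $T \in \P_{kd,\Z}$ such that (a) $\Ext^i_{\P_\Z}(T, G) = 0$ for $i > 0$ whenever $G$ admits a Schur filtration, while (b) $\Ext^i_{\P_\Z}(T, S^k \circ S^d)$ and $\Ext^i_{\P_\Z}(T, S^k \circ \Lambda^d)$ are both nonzero in some positive degree; this directly yields the contradiction. Guided by corollary \ref{cor-Exthigher}, I take $T = \Lambda^{kd,V}$ for a suitable free $\Z$-module $V$.

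For (a), Ringel duality does the work. For $V = \Z^n$, the identity $\hom(V,W) = W^n$ combined with the decomposition $\Lambda^{kd}(W^n) = \bigoplus_\mu \Lambda^\mu(W)$ (summed over compositions $\mu$ of $kd$ into $n$ parts) splits $\Lambda^{kd,V}$ as a direct sum of tensor products $\Lambda^\mu$; lemma \ref{lm-comput}(iii) and the monoidality of $\Theta$ (proposition \ref{prop-square}) then identify $\Theta \Lambda^{kd,V}$ with the direct sum of the corresponding projective functors $\Gamma^\mu$. Since $\Theta$ is a triangle equivalence and $\Theta S_\mu \simeq W_{\mu'}$ is concentrated in cohomological degree zero for every Schur functor $S_\mu$, we deduce
\begin{equation*}
\Ext^i_{\P_\Z}(\Lambda^{kd,V}, S_\mu) \simeq \hom_{\DD^b(\P_\Z)}\bigl(\Theta \Lambda^{kd,V}, W_{\mu'}[i]\bigr) = 0 \quad (i > 0),
\end{equation*}
and hence $\Ext^{>0}_{\P_\Z}(\Lambda^{kd,V}, G) = 0$ by the long exact sequences of Ext applied to the subquotients of a Schur filtration.

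For (b), specializing equation (\ref{eqn}) of corollary \ref{cor-Exthigher} with $F = S^k$ and $\lambda = (d)$ or $\lambda = (1^d)$ yields
\begin{equation*}
\Ext^i_{\P_\Z}(\Lambda^{kd,V}, S^k \circ S^d) \simeq H^i(\Theta^d S^k)(\Lambda^d V), \quad \Ext^i_{\P_\Z}(\Lambda^{kd,V}, S^k \circ \Lambda^d) \simeq H^i(\Theta^d S^k)(\Gamma^d V).
\end{equation*}
Choosing $V = \Z^d$ for the first and $V = \Z$ for the second turns both $\Lambda^d V$ and $\Gamma^d V$ into a rank-one free $\Z$-module, so both sides reduce to $H^i(\Theta^d S^k)(\Z)$; everything then boils down to showing this is nonzero for some $i > 0$ whenever $d \geq 3$ and $k \geq 2$. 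This cohomological nonvanishing is the main obstacle. Via corollary \ref{cor-1} it amounts to exhibiting a nonzero $L_j S^k(\Z; d)$ with $j < kd$. The minimal case $d = 3$, $k = 2$ is forced by example \ref{exemple}: the chain $\Theta^3 S^2 \simeq \Theta \Gamma^2 \simeq [\otimes^2 \xrightarrow{\mathrm{sym}} \Gamma^2]$ gives $H^1(\Theta^3 S^2)(\Z) = \Gamma^2(\Z)/2\Gamma^2(\Z) = \Z/2$, hence $L_5 S^2(\Z; 3) = \Z/2 \neq 0$. For $k \geq 3$ this $2$-torsion propagates to $H^1(\Theta\Gamma^k) \simeq \Ext^1_{\P_\Z}(S^k, \Lambda^k)$ (using $\Theta^2 \simeq \RR\H(S^k, -)$) via the comultiplication $\Gamma^k \hookrightarrow \Gamma^2 \otimes \Gamma^{k-2}$ together with the monoidality isomorphism $\lax$. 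For $d \geq 4$ the nonvanishing propagates through the d\'ecalage isomorphisms of proposition \ref{prop-decalage}, whose two-fold composite $L_i\Gamma^k(V;n) \simeq L_{i+2k}S^k(V;n+2)$ transports a nonzero $L_{j_0}\Gamma^k(\Z;n_0)$ with $j_0 < kn_0$ into a nonzero $L_{j_0+2k}S^k(\Z;n_0+2)$ satisfying $j_0+2k < k(n_0+2)$. Combining (a) and (b) yields the required contradiction.
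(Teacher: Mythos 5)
Your overall strategy (steps (a) and (b)) mirrors the paper's: steps (a) and the translation into $H^*(\Theta^d S^k)(\Z) = L_{dk-*}S^k(\Z;d)$ via corollary \ref{cor-Exthigher} and corollary \ref{cor-1} are exactly the paper's moves. Where you diverge is in step (b). The paper simply cites \cite{TouzeEML} (ultimately Cartan's computation of $H_*(K(\Z,d))$) for the fact that $L_*S^k(\Z;d)$ has nontrivial $p$-torsion for $p\mid k$ when $d>2$; since the degree-zero piece $\H(\Lambda^{kd},S^k\circ S^d)$ is automatically $\Z$-free, the torsion lives in positive $\Ext$-degree, done. You instead try to manufacture the nonvanishing from scratch, and this is where the argument breaks.

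There are several genuine gaps in your step (b). First, the specific claim that $H^1(\Theta^d S^k)(\Z)\neq 0$ for all $d\geq 3$, $k\geq 2$ is stronger than needed and appears false: e.g.\ for $d=k=3$ the relevant $3$-torsion sits in $L_7S^3(\Z;3)=H^2(\Theta^3S^3)(\Z)$, not in $H^1$, so you cannot fix the cohomological degree to $1$ uniformly. Second, the ``propagation'' from $k=2$ to $k\geq 3$ via $\lax$ and the comultiplication $\Gamma^k\hookrightarrow\Gamma^2\otimes\Gamma^{k-2}$ does not work as stated: the K\"unneth terms you would need, such as $H^1(\Theta\Gamma^2)(\Z)\otimes H^0(\Theta\Gamma^{k-2})(\Z)$, vanish because $H^0(\Theta\Gamma^{m})(\Z)\simeq\Lambda^m(\Z)=0$ for $m\geq 2$ (and the co/multiplication does not in any case control the image up to torsion, since $\Gamma^k\to\Gamma^2\otimes\Gamma^{k-2}\to\Gamma^k$ is multiplication by $\binom{k}{2}$, which kills the $2$-torsion whenever $\binom{k}{2}$ is even). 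Third, the d\'ecalage of proposition \ref{prop-decalage} gives $L_i\Gamma^k(V;n)\simeq L_{i+k}\Lambda^k(V;n+1)\simeq L_{i+2k}S^k(V;n+2)$, a chain $\Gamma\to\Lambda\to S$ that terminates at $S$; there is no d\'ecalage taking $S^k$ at height $n$ to anything at height $n+1$, so one cannot run a simple induction on $d$ starting from one base case at $d=3$ — you would need an independent argument for $\Gamma^k$ at every height, which amounts to the original problem. In short, the translation to $L_*S^k(\Z;d)$ is fine, but the nonvanishing of some positive-degree derived functor still needs the external input (Cartan / \cite{TouzeEML}) that the paper invokes, or a genuinely different argument you have not supplied.
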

\begin{proof}We prove the case of $S^k\circ S^d$, the other case is similar. If $S^k\circ S^d$ had a filtration whose graded pieces are Schur functors, then Schur functors are $\H(\Lambda^{kd},-)$ acyclic, so we would have for all positive $i$:
$$\E^i_{\P_\Z}(\Lambda^{kd}, S^k\circ S^d)=0\;.$$
Moreover, we know that 
$$\E^0_{\P_\Z}(\Lambda^{kd}, S^k\circ S^d)=\H(\Lambda^{kd}, S^k\circ S^d)$$
has $\Z$-free values. In particular, $\E^*_{\P_\Z}(\Lambda^{kd}, S^k\circ S^d)$ has values in free graded $\Z$-modules (concentrated in degree zero). 
By corollary \ref{cor-Exthigher}(\ref{eqn}) this would mean that $H^*(\Theta^{d} S^k)\circ \Lambda^d$ has values in $\Z$-free graded modules (concentrated in degree zero). In particular  $L_{*}S^k(\Z;d)=L_*S^k(\Lambda^d(\Z^d);d)$ would be a graded free $\Z$-module (concentrated in degree $dk$). But the values of $L_{*}S^k(\Z;d)$ are well-known \cite{TouzeEML}, they are a well-identified direct summand (the direct summand of weight $k$) of the homology of the Eilenberg-Mac Lane space $K(\Z,d)$. In particular, if $d>2$ they have non trivial $p$-torsion when $p$ is a prime dividing $k$. This contradiction proves the result.
\end{proof}

\subsubsection{Proof of theorem \ref{thm-pleth}}
Let $X,Y\in s(\P_\k)$ be simplicial strict polynomial functors. We say that a morphism $f:X\to Y$ is a \emph{weak equivalence} if $\N f:\N X\to \N Y$  is a quasi-isomorphism of chain complexes (or equivalently if $\C F:\C X\to \C Y$ is a quasi-isomorphism). The proof of theorem \ref{thm-pleth} uses the following property of weak equivalences.
\begin{lemma}\label{lm-we}
Let $f\in \hom_{s(\P_\k)}(X,Y)$ be a weak equivalence. Then for all $F\in \P_\k$, the morphism $F(f)\in \hom_{s(\P_\k)}(F\circ X,F\circ Y)$ is also a weak equivalence. 
\end{lemma}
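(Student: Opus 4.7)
The plan is to reduce the statement to the classical fact that non-additive functors applied degreewise preserve weak equivalences between simplicial modules with projective values, which I will establish by upgrading ``weak equivalence'' to ``simplicial homotopy equivalence''. First, I would note that being a weak equivalence in $s(\P_\k)$ is a pointwise condition: a map $g\colon A \to B$ is a weak equivalence if and only if, for every $V \in \V_\k$, the simplicial map $g(V)\colon A(V) \to B(V)$ is a weak equivalence of simplicial $\k$-modules. Moreover, by the construction of composition in $\P_\k$, one has $(F\circ X)(V) = F(X(V))$, with $F$ applied degreewise and with face and degeneracy operators $F(d_i)$ and $F(s_i)$. Thus it suffices to show that, for any weak equivalence $g\colon A \to B$ between simplicial objects of $\V_\k$, the degreewise application $F(g)$ is again a weak equivalence of simplicial $\k$-modules.

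Next, I would upgrade such a $g$ to a simplicial homotopy equivalence. Since $A$ and $B$ take values in $\V_\k$, the normalized chain complexes $\N A$ and $\N B$ are bounded-below chain complexes of finitely generated projective $\k$-modules (each $\N A_n$ is a direct summand of $A_n$). The standard fact that a quasi-isomorphism between bounded-below complexes of projectives is a chain homotopy equivalence then produces a homotopy inverse to $\N g$ together with explicit chain homotopies. Transporting through the Dold--Kan correspondence, which preserves homotopy relations (as recalled in section \ref{subsec-rappel-DP}), yields a simplicial homotopy inverse to $g$, making $g$ a simplicial homotopy equivalence.

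Finally, a simplicial homotopy between two morphisms of simplicial objects is encoded by a family of maps $h_i$ satisfying purely simplicial (categorical) identities. Consequently any functor, even a non-additive one like $F$, carries such data to a simplicial homotopy when applied degreewise, so $F$ preserves simplicial homotopy equivalences. Thus $F(g)$ is a simplicial homotopy equivalence and in particular a weak equivalence, which proves the lemma. The only step requiring real care is the transport between chain-level and simplicial-level homotopies in step two, but this is standard and is exactly the form in which the Dold--Kan correspondence is invoked elsewhere in the paper.
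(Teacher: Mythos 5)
Your proof is correct and follows essentially the same route as the paper's: reduce to evaluation at each $V\in\V_\k$, use projectivity of values to upgrade the pointwise quasi-isomorphism $\N f_V$ to a chain homotopy equivalence, transport through Dold--Kan to get a simplicial homotopy equivalence of simplicial $\k$-modules, and finally observe that an arbitrary (non-additive) functor applied degreewise preserves simplicial homotopy equivalences. The paper's proof is worded a bit more tersely but the decomposition into steps is the same.
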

\begin{proof}
As proved in lemma \ref{lm-caracqis}(i), $f$ is a weak equivalence if and only if for all $V\in\V_\k$, the morphism of complexes of $\k$-modules $\N f_V:\N X(V)\to \N Y(V)$ is a quasi-isomorphism. By definition, functors in $\P_\k$ have $\k$-projective values, hence $\N X(V)$ and $\N Y(V)$ are complexes of projective $\k$-modules. Hence $\N f_V$ is a homotopy equivalence in $\Ch_{\ge 0}(\k\text{-mod})$ (although $\N f$ is \emph{not} a homotopy equivalence in $\Ch_{\ge 0}(\P_\k)$ in general). Using the Dold-Kan correspondence
$s(\k\text{-mod})\leftrightarrows \Ch_{\ge 0}(\k\text{-mod})$, we conclude that $f_V:X_V\to Y_V$ is a homotopy equivalence of simplicial $\k$-modules (we use here the simple observation that the normalized chain functors $\N$, and their inverses $\N^{-1}=\K$, commute with evaluation functors). Hence $F(f_V)=F(f)_V$ is a homotopy equivalence of $\k$-modules. In particular, $\N F(f)_V$ is a homotopy equivalence in $\Ch_{\ge 0}(\k\text{-mod})$, so that $\N F(f)$ is a quasi-isomorphism.
\end{proof}

\begin{proof}[Proof of theorem \ref{thm-pleth}]
First, by lemma \ref{lm-formal} and theorem \ref{thm-main}, the vanishing hypothesis yields an isomorphism in $\DD^b(\P_{d,\k})$:
$$H^0(\Theta^n G) [-nd]\simeq \C G_{K(n)}\simeq \N G_{K(n)}\;.$$
Such an isomorphism may be represented by a zig-zag in $\Ch_{\ge 0}(\P_{d,\k})$:
$$ H^0(\Theta^n G) [-nd]\leftarrow P \rightarrow \N G_{K(n)} \;,$$
in which the two arrows are quasi-isomorphisms, and $P$ is a complex of projective objects in $\P_{d,\k}$. We can take the complex $P$ in the middle as a complex concentrated in positive homological degrees because if $Q$ is a bounded below complex of projectives, with $H_i(Q)=0$ with $i<0$, then we can replace it by its truncation $Q'$ (with $Q'_i=0$ if $i<0$ and $Q'_0=\ker (Q_0\to Q_{-1})$), which is also a complex of projectives.
Applying the Dold-Kan functor $\K=\N^{-1}$ we get a zigzag of weak equivalences:
$$\K(H^0(\Theta^n G) [-nd])\leftarrow \K(P) \rightarrow \K\N G_{K(n)}\simeq G_{K(n)}\;.$$
Moreover, the explicit formula for $\K$ \cite[section 8.4]{Weibel} shows that 
$$\K(H^0(\Theta^n G) [-nd])\simeq \K(\k [-nd])\otimes H^0(\Theta^n G)=K(nd)\otimes H^0(\Theta^n G)\;.$$
Hence, by lemma \ref{lm-we}, applying $F$ to our zigzag of weak equivalences yields a zigzag of weak equivalences:
$$F_{K(nd)}\circ H^0(\Theta^n G)\leftarrow F\circ \K(P)\rightarrow F\circ (G_{K(n)})= (F\circ G)_{K(n)}\;. $$
Applying the chain functor, we thus get an isomorphism in $\DD^b(\P_{de,\k})$:
$$(\Theta^{nd}F)\circ H^0(\Theta^n G) [-nd]\simeq \Theta^n(F\circ G) [-nd]\;.$$
This finishes the proof of theorem \ref{thm-pleth}.
\end{proof}

\subsection{Block theory and vanishing}

\subsubsection{Brief recollections of block theory} In this section we recall basics of block theory for $\P_{d,\k}$. We refer the reader to \cite[Chap 5]{Martin} for further details.
Let $\k$ be a field of positive characteristic $p$. The simple objects of $\P_{d,\k}$ are classified by partitions of weight $d$. To be more specific, the Schur functors $S_{\lambda}$ have a simple socle, denoted by $L_\lambda$. The $L_\lambda$, for $\lambda$ a partition of weight $d$, are exactly the simple objects of $\P_{d,\k}$ \cite[Thm 3.4.2]{Martin}.

The blocks of $\P_{d,\k}$ are the equivalent classes of the set of simple objects of $\P_{d,\k}$ under the following equivalence relation. To simple objects $L_\lambda,L_\mu$ are equivalent if there is a sequence of simple functors 
$$L_\lambda=L_1,L_2,\dots,L_r=L_\mu $$ 
such that for all $i$, $\Ext^1_{\P_{d,\k}}(L_i,L_{i+1})\ne 0$. We let $\B_{d,\k}$ be the blocks of $\P_{d,\k}$.

For all functors $F$, and for all blocks $b\in \B_{d,\k}$, we denote by $F_b$ the sum of all subfunctors of $F$ whose composition factors belong to $b$. It is not hard to prove that $F=\oplus_{b\in\B_{d,\k}} F_b$. Moreover, if $F,G\in\P_{d,\k}$ and $b\ne b'$ are two blocks, then $\hom_{\P_{d,\k}}(F_b,G_{b'})=0$. So if we denote by $(\P_{d,\k})_b$ the full subcategory of $\P_{d,\k}$ whose objects are the $F_b$, for $F\in\P_{d,\k}$, the abelian category $\P_{d,\k}$ splits as a direct sum:
$$\textstyle\P_{d,\k} =\bigoplus_{b\in\B_{d,\k}}(\P_{d,\k})_b\;.$$
In particular, if no composition factors of $F$ and $G$ belong to the same block, then $\Ext^*_{\P_{d,\k}}(F,G)=0$.

The blocks of $\P_{d,\k}$ were determined by Donkin in \cite{DonkinHDim}. Two simple functors $L_\lambda,L_\mu$ lie in the same block if and only if the partitions $\lambda$ and $\mu$ have the same $p$-core (this is known as `Nakayama rule'). See \cite[6.2]{JamesKerber} for the definition of a $p$-core and \cite[Thm 5.3.1]{Martin} for another combinatorial formulation of the statement.

\subsubsection{Base change}
Let $\k$ be a field. By \cite{SFB}, there is an exact base change functor for strict polynomial functors
$$\P_{d,\Z}\to \P_{d,\k}\;,\quad F\mapsto F_{\k}\;.$$
The base change functor formalizes the fact that the integral equations defining $F$ may be interpreted in $\k$ (through the ring morphism $\Z\to\k$) to define an element of $\P_{d,\k}$. For example, the base change functor sends symmetric (resp. exterior, resp. divided, resp. tensor) powers of $\P_{d,\Z}$ to the symmetric (resp. exterior, resp. divided, resp. tensor) powers, viewed as elements of $\P_{d,\k}$.
In general, the functor $F_{\k}$ is characterized by the following condition. For all free and finitely generated $\Z$-modules $V$, 
$F(V)\otimes \k$ is naturally isomorphic to $F_{\k}(V\otimes \k)$. In particular we have isomorphisms of complexes of $\k$-modules
$$LF(V;n)\otimes \k\simeq LF_\k(V\otimes\k;n)\;. $$

\subsubsection{Vanishing results}
Now we indicate how to use block theory to get vanishing results for derived functors. We work here over the base ring $\k=\Z$. Let $F\in\P_{d,\Z}$. By base change and universal coefficient theorem, we have injective morphisms for all $i$:
$$L_iF(\Z^m;n)\otimes \mathbb{F}_p\hookrightarrow L_iF_{\mathbb{F}_p}(\mathbb{F}_p^m;n)\;.$$
So vanishing of $L_iF(\Z^m;n)\otimes \mathbb{F}_p$ follows from vanishing of $L_iF_{\mathbb{F}_p}(\mathbb{F}_p^m;n)$. Now our main theorem gives isomorphisms:
$$L_iF_{\mathbb{F}_p}(\mathbb{F}_p^m;1)\simeq \Ext^{d-i}_{\P_{d,\mathbb{F}_p}}(\Lambda^{d,\mathbb{F}_p^m}, F_{\mathbb{F}_p})\;, \;  L_iF_{\mathbb{F}_p}(\mathbb{F}_p^m;2)\simeq \Ext^{d-i}_{\P_{d,\mathbb{F}_p}}(S^{d,\mathbb{F}_p^m}, F_{\mathbb{F}_p})\;.$$
Thus, if no composition factor of $F_{\mathbb{F}_p}$ and $\Lambda^{d,\mathbb{F}_p^m}$ (or $S^{d,\mathbb{F}_p^m}$) belongs to the same 
block, then the $\Ext$-groups on the right hand side, hence the corresponding derived functors, must vanish.

As an example of this approach, we prove vanishing results for some derived functors of the Schur functor $S_{(d-1,1)}$ (which is more concretely described as the kernel of the multiplication $S^{d-1}\otimes S^1\to S^d$). To put our result in context, we first recall some easily obtained information on torsion of the derived functors of 
$S_{(d-1,1)}$.
\begin{lemma}\label{lm-tors}
Let $d\ge 3$ and let $n$ be a positive integer. For all $i$ and all $m$, the $\Z$-module $L_iS_{(d-1,1)}(\Z^m;n)$ may have $r$-torsion elements only for $r$ dividing $d!(d-1)!$. 
\end{lemma}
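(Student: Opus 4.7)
The plan is to exploit the defining short exact sequence of the Schur functor $S_{(d-1,1)}$ in $\P_{d,\Z}$, namely
$$0\to S_{(d-1,1)}\to S^{d-1}\otimes S^1\xrightarrow{\mathrm{mult}} S^d\to 0,$$
together with a symmetrization argument that bounds the torsion exponent of the derived functors of the two outer terms by $d!$ and $(d-1)!$ respectively.

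First, since the displayed sequence is admissible (exact after evaluation on any free $\Z$-module), applying it to the levelwise free simplicial $\Z$-module $\K(\Z^m[-n])$ produces a short exact sequence of simplicial abelian groups. Taking chains and then homology yields a long exact sequence
$$\cdots\to L_{i+1}S^d(\Z^m;n)\to L_iS_{(d-1,1)}(\Z^m;n)\to L_i(S^{d-1}\otimes S^1)(\Z^m;n)\to L_iS^d(\Z^m;n)\to\cdots$$

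Next I would control torsion in $L_*S^d$ and $L_*(S^{d-1}\otimes S^1)$ by a norm argument. The $\Sigma_d$-averaging map
$$\sigma\colon S^d\to \otimes^d,\qquad v_1\cdots v_d\mapsto \sum_{\pi\in\Sigma_d}v_{\pi(1)}\otimes\cdots\otimes v_{\pi(d)},$$
is a well-defined morphism in $\P_{d,\Z}$, and its composition with the multiplication $\mu\colon \otimes^d\twoheadrightarrow S^d$ equals $d!\cdot\mathrm{id}_{S^d}$. By the Eilenberg-Zilber shuffle map and the Künneth formula (applied to the torsion-free simplicial module $\K(\Z^m[-n])$), $L_i(\otimes^d)(\Z^m;n)$ is concentrated in degree $i=nd$, where it equals $(\Z^m)^{\otimes d}$, and is in particular $\Z$-free. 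The factorization
$$L_iS^d(\Z^m;n)\xrightarrow{\sigma_*} L_i(\otimes^d)(\Z^m;n)\xrightarrow{\mu_*} L_iS^d(\Z^m;n)$$
therefore forces every torsion element on the left to lie in $\ker\sigma_*$, which is annihilated by $d!$. An entirely analogous argument, symmetrizing only the first $(d-1)$ factors, produces a similar factorization of $(d-1)!\cdot\mathrm{id}$ on $S^{d-1}\otimes S^1$ through $\otimes^{d-1}\otimes S^1$, whose derived functors are again $\Z$-torsion free by Künneth; hence the torsion in $L_*(S^{d-1}\otimes S^1)(\Z^m;n)$ is killed by $(d-1)!$.

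Combining these bounds via the long exact sequence, $L_iS_{(d-1,1)}(\Z^m;n)$ is an extension of a subgroup of $L_i(S^{d-1}\otimes S^1)(\Z^m;n)$ (torsion exponent dividing $(d-1)!$) by a quotient of $L_{i+1}S^d(\Z^m;n)$ (torsion exponent dividing $d!$), so its torsion exponent divides $d!(d-1)!$, which is the assertion. The only point requiring care is the identification of $L_*(\otimes^d)(\Z^m;n)$ with the torsion-free module $(\Z^m)^{\otimes d}$ concentrated in degree $nd$; everything else is a standard norm-and-long-exact-sequence diagram chase.
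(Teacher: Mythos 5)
Your proof is correct and follows essentially the same route as the paper: it uses the short exact sequence $0\to S_{(d-1,1)}\to S^{d-1}\otimes S^1\to S^d\to 0$, the resulting long exact sequence of derived functors, and the factorization of $d!\cdot\mathrm{id}_{S^d}$ (resp.\ $(d-1)!\cdot\mathrm{id}_{S^{d-1}\otimes S^1}$) through the torsion-free $L_*(\otimes^d)(\Z^m;n)$ to bound the torsion of each outer term. The only cosmetic difference is that you spell out the $\Si_d$-averaging map and the K\"unneth/Eilenberg--Zilber identification of $L_*(\otimes^d)$, which the paper states more tersely.
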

\begin{proof}
We know the derived functors of tensor products: $L_i\otimes^d(\Z^m;n)$ equals zero if $i\ne nd$ and $(\Z^m)^{\otimes d}$ if $i=nd$ (by a direct computation, or use example \ref{exemple} and theorem \ref{thm-main}).
Multiplication by $d!$, as a mormphism from $S^d$ to $S^d$ factors as the composite of the comultiplication $S^d\to \otimes^d$ and of the multiplication $\otimes^d\to S^d$. So, multiplication by $d!$ anihilates the torsion part of $L_iS^d(\Z^m;n)$ for all $i$, $n$, $d$, $m$. Similarly, multiplication by $(d-1)!$ anihilates the torsion part of $L_i(S^{d-1}\otimes S^1)(\Z^m;n)$. To prove lemma \ref{lm-tors}, it suffices to prove that multiplication by $(d-1)!d!$ anihilates the torsion part of $L_iS_{(d-1,1)}(\Z^m;n)$. This results from the information on the torsion of $S^d$ and $S^{d-1}\otimes S^1$ and the long exact sequence of derived functors
$$\dots\to L_{i+1}S^d(\Z^m;n)\xrightarrow[]{\partial}  L_iS_{(d-1,1)}(\Z^m;n)\to L_i(S^{d-1}\otimes S^1)(\Z^m;n)\to \dots $$
arising from the exact sequence $0\to S_{(d-1,1)}\to S^{d-1}\otimes S^1\to S^d\to 0$.
\end{proof}

We also recall that we can compute $L_iS_{(d-1,1)}(\Z^m;1)$ by d\'ecalage. It equals zero for $i\ne d$, and $W_{(2,1^{d-2})}(\Z^m)$ for $i=d$. So we are only interested in computing $L_iS_{(d-1,1)}(\Z^m;n)$ for $n\ge 2$.
We now give our vanishing result.
\begin{proposition}\label{prop-vanish}
Let $d\ge 3$ and let $p$ be a prime. If $d\ne 0\text{ mod }p$, then 
$$L_*S_{(d-1,1)}(\Z;2)\otimes\mathbb{F}_p =0 .$$
If $d\ne 0\text{ mod }p$ and if $d\ne 2\text{ mod }p$, then 
$$L_*S_{(d-1,1)}(\Z;3)\otimes\mathbb{F}_p =0 .$$
\end{proposition}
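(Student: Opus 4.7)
The plan is to follow the general strategy of this subsection: reduce each vanishing to a computation over $\mathbb{F}_p$ via the base-change injection $L_iF(\Z;n)\otimes\mathbb{F}_p\hookrightarrow L_iF_{\mathbb{F}_p}(\mathbb{F}_p;n)$, rewrite these derivatives as $\Ext$ groups in $\P_{d,\mathbb{F}_p}$ using the main theorem (combined when needed with the d\'ecalage formula), and finally apply the block decomposition of $\P_{d,\mathbb{F}_p}$ together with Nakayama's rule.

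For $n=2$, the main theorem gives $L_iS_{(d-1,1),\mathbb{F}_p}(\mathbb{F}_p;2)\simeq\Ext^{2d-i}_{\P_{d,\mathbb{F}_p}}(S^d,S_{(d-1,1),\mathbb{F}_p})$. Both $S^d=\nabla((d))$ and $S_{(d-1,1)}=\nabla((d-1,1))$ are indecomposable costandard modules, hence supported in the blocks of $(d)$ and $(d-1,1)$ respectively; by the block decomposition the $\Ext$ group vanishes as soon as these blocks differ, which by Nakayama's rule amounts to the two partitions having distinct $p$-cores. A direct abacus computation using the beta-set $\{1,d\}$ of $(d-1,1)$ shows that the $p$-core of $(d-1,1)$ equals $(r-1,1)$ when $d\equiv r\pmod p$ with $r\geq 2$, equals $(p,1)$ when $r=1$, and is the empty partition when $r=0$; comparison with the $p$-core $(r)$ of $(d)$ shows that the cores coincide only when $r=0$, which is precisely the failure of the hypothesis.

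For $n=3$, I plan to first use the d\'ecalage formula of Proposition~\ref{prop-decalage} with $\lambda=(d-1,1)$, so that $\lambda'=(2,1^{d-2})$: this yields $L_iW_{(2,1^{d-2})}(V;2)\simeq L_{i+d}S_{(d-1,1)}(V;3)$, and since derived functors vanish in negative degrees, the vanishing of $L_\ast S_{(d-1,1),\mathbb{F}_p}(\mathbb{F}_p;3)$ in all degrees becomes equivalent to the vanishing of $L_\ast W_{(2,1^{d-2}),\mathbb{F}_p}(\mathbb{F}_p;2)$ in all degrees. Applying the main theorem once more rewrites the latter as $\Ext^\ast_{\P_{d,\mathbb{F}_p}}(S^d,W_{(2,1^{d-2}),\mathbb{F}_p})$. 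Since the Weyl functor $W_{(2,1^{d-2})}=\Delta((2,1^{d-2}))$ is an indecomposable standard module, it is supported in the block of $(2,1^{d-2})$, so the problem reduces once more to comparing $p$-cores, this time those of $(d)$ and $(2,1^{d-2})$.

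The main obstacle will be the combinatorial verification of this second $p$-core comparison. Writing $d=qp+r$ with $0\leq r<p$, the approach is to successively remove rim $p$-hooks from the bottom of the first column of $(2,1^{d-2})$: after $q$ such removals one reaches $(2,1^{r-2})$ when $r\geq 2$; after $q-1$ removals one reaches $(2,1^{p-1})$ when $r=1$, and no further rim $p$-hook is then available; and when $r=0$ a final removal using the rim hook through the corner cell $(1,2)$ reduces the partition to the empty one. Comparing with the $p$-core $(r)$ of $(d)$, the two $p$-cores coincide exactly when $r\in\{0,2\}$, which matches precisely the failure of the combined hypothesis $d\not\equiv 0\pmod p$ and $d\not\equiv 2\pmod p$; under those hypotheses the block argument delivers the desired vanishing.
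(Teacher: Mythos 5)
Your proof is correct, and it follows the same overall strategy as the paper: base change to $\mathbb{F}_p$, translation of derived functors into $\Ext$ groups via the main theorem (with d\'ecalage when needed), and the block/Nakayama-rule comparison of $p$-cores. There is one small difference worth noting: for $n=2$ the paper also applies d\'ecalage first (so it compares the $p$-cores of $(1^d)$ and $(2,1^{d-2})$), whereas you apply the main theorem directly and compare the $p$-cores of $(d)$ and $(d-1,1)$; these are conjugate pairs, and since conjugation commutes with taking $p$-cores the two comparisons give the same answer. Your direct route for $n=2$ saves a step, at the modest cost of having to carry out an extra abacus computation (for $(d-1,1)$) that the paper avoids by reusing the single computation for $(2,1^{d-2})$ in both cases. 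Both arguments are sound.

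One small point you should make explicit: the d\'ecalage isomorphism $L_iW_{(2,1^{d-2})}(V;2)\simeq L_{i+d}S_{(d-1,1)}(V;3)$ a priori only controls $L_jS_{(d-1,1)}(V;3)$ for $j\ge d$, and you invoke "derived functors vanish in negative degrees" to conclude that $L_jS_{(d-1,1)}(V;3)=0$ for $j<d$. That is indeed what the d\'ecalage isomorphism gives when read for $i<0$, but spelling this out (rather than leaving it as an aside) would tighten the argument.
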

\begin{proof}
First, using d\'ecalage isomorphisms, we know that $L_*S_{(d-1,1)}(\Z;i+1)$ is isomorphic to $L_{*-d}W_{(2,1^{d-2})}(\Z,i)$. Thus we have to prove the vanishing of $L_*W_{(2,1^{d-2})}(\Z,i)\otimes\mathbb{F}_p$, for $i=1$ or $2$. To do this, it suffices to prove the vanishing of $L_*W_{(2,1^{d-2}), \mathbb{F}_p}(\mathbb{F}_p,i)$.

So, in the remainder of the proof, we work over the ground field $\mathbb{F}_p$.
Since the socle of a Schur functor is simple, all its composition factors belong to the same block. Taking duality, we obtain that all the composition factors of a Weyl functor belong to the same block. Moreover, simple functors are self-dual \cite[Thm 3.4.9]{Martin}, so the head (= cosocle = largest semi-simple quotient) of $W_{(2,1^{d-2})}$ is $L_{(2,1^{d-2})}$. So, the composition factors of $W_{(2,1^{d-2})}$ all belong to the same block as the simple functor $L_{(2,1^{d-2})}$. Thus, to prove proposition \ref{prop-vanish}, it suffices to prove that the partitions $(2,1^{d-2})$ and $(1^d)$ do not correspond to the same block if $d\ne 0\text{ mod }p$, and that the partitions $(2,1^{d-2})$ and $(d)$ do not correspond to the same block if $d\ne 0\text{ mod }p$ and $d\ne 2\text{ mod }p$.
Now the result follows from an easy application of Nakayama rule. Indeed, if $d= qp+r$ with $0\le r<p$, the $p$-core of $(1^d)$ is $(1^r)$ and the $p$-core of $(d)$ is $(r)$, while the $p$-core of $(2,1^{d-2})$ equals $(2,1^{r-2})$ if $r\ge 2$, $(2,1^{p-1})$ if $r=1$, and $(0)$ if $r=0$. 
\end{proof}

\begin{remark}
If we want to generalize proposition \ref{prop-vanish} to get vanishing results for $L_{*}F(\Z^m,i)$ for $m>1$, we need to determine the blocks which appear through the composition factors of $\Lambda^{d,\mathbb{F}_p}$ and $S^{d,\mathbb{F}_p}$. This can be easily done using Pieri rules \cite[Thm (3) p.168]{AB1}.
\end{remark}

\begin{remark}
It is interesting to compare proposition \ref{prop-vanish} with the discussion at the end of section 8.4 of \cite{BM}, where some computations of the derived functors of $S_{(d-1,1)}$ are discussed (the Schur functor $S_{(d-1,1)}$ is denoted by $J^d$ in this reference). In particular, it seems that the vanishing of $L_*S_{(d-1,1)}(\Z;3)\otimes\mathbb{F}_p$ we have obtained is optimal: there is $3$ and $5$ torsion in $L_*S_{(4,1)}(\Z;3)$, and there is $11$ and $13$ torsion in $L_*S_{(12,1)}(\Z;3)$.
\end{remark}

\subsection{An example of computation}
We finish by an example of elementary computation of derived functors involving Ringel duality, namely we compute $L_* S^p(V;n)$ over a field of positive characteristic $p$, and we retrieve from it  a computation of \cite{BM}.

\begin{proposition}\label{prop-calcSp}
Let $\k$ be a field of odd characteristic $p$ and let $n$ be a positive integer. The derived functors of $S^p$ satisfy
\begin{itemize}
\item[(1)] $L_{np} S^p(V;n)$ equals $\Lambda^p$ if $n$ is odd, and $\Gamma^p$ if $n$ is even.
\end{itemize}
If $n=1$ or $n=2$ the derived functors are trivial in the other degrees. If $n\ge 3$, the only nontrivial summands of $L_*S^p(V;n)$ are:
\begin{itemize}
\item[(2)] if $n$ is odd, and $1\le i\le (n-1)/2$, the summands
  $$L_{np-i(2p-2)+p}S^p(V;n)=L_{np-i(2p-2)+p-1}S^p(V;n)=V^{(1)}\;,$$
\item[(2')] if $n$ is even and $1\le i\le n/2-1$, the summands
$$L_{np-i(2p-2)+1}S^p(V;n)=L_{np-i(2p-2)}S^p(V;n)=V^{(1)}\;.$$
\end{itemize}
\end{proposition}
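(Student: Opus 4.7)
The plan is to invoke Theorem~\ref{thm-main} to translate the question into a cohomology computation in the derived category, and then compute $\Theta^{n}S^{p}$ inductively, the whole induction being driven by a single nontrivial base step at $n=3$. Explicitly, the main theorem gives $L_{i}S^{p}(V;n)\simeq H^{np-i}(\Theta^{n}S^{p})(V)$, so for $n=1,2$ the cases follow at once from Lemma~\ref{lm-comput}: $\Theta S^{p}\simeq \Lambda^{p}$ and $\Theta^{2}S^{p}\simeq \Theta\Lambda^{p}\simeq \Gamma^{p}$, both concentrated in cohomological degree $0$, which yields the top-degree part of (1) and makes the ranges in (2), (2') empty.

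The technical heart would be the base step $n=3$, i.e., establishing
\begin{equation*}
\Theta\Gamma^{p}\ \simeq\ \Lambda^{p}\ \oplus\ I^{(1)}[p-2]\ \oplus\ I^{(1)}[p-1]\qquad \text{in }\DD^{b}(\P_{p,\k}).
\end{equation*}
I would compute this using the duality $\Theta\Gamma^{p}\simeq \RR\H(S^{p},\Lambda^{p})$ and a Koszul/de Rham type resolution with terms of the form $S^{p-i}\otimes\Lambda^{i}$; these are $\H(\Lambda^{p},-)$-acyclic by Lemma~\ref{lm-acyclic}. In characteristic~$p$ this complex fails to be exact at precisely two adjacent spots, the cohomology being the Frobenius twist $I^{(1)}$: this is a strict polynomial functor avatar of the classical Cartier isomorphism. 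The $H^{0}$ is identified via the parameterized statement $\hom(\Lambda^{p,V},\Gamma^{p})=\Lambda^{p}(V)$, and formality of the resulting three-term decomposition is verified by showing that the possible obstructions in $\Ext^{1}_{\P_{p,\k}}$ between $\Lambda^{p}$ and $I^{(1)}[k]$ vanish at the relevant shifts.

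Granted the base step, the general $n\geq 3$ follows by induction using three ingredients: $\Theta\Lambda^{p}\simeq \Gamma^{p}$ (Lemma~\ref{lm-comput}); the base formula for $\Theta\Gamma^{p}$; and $\Theta I^{(1)}[k]\simeq I^{(1)}[k+p-1]$ from Proposition~\ref{prop-chal}. An even-to-odd step $\Gamma^{p}\mapsto \Lambda^{p}\oplus I^{(1)}[p-2]\oplus I^{(1)}[p-1]$ creates one new pair of Frobenius twists in degrees $p-2,\,p-1$, while an odd-to-even step $\Lambda^{p}\mapsto \Gamma^{p}$ just swaps the top term; all previously created pairs shift rigidly by $p-1$ at each application of $\Theta$. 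A direct bookkeeping then yields pairs at $((2j-1)(p-1)-1,(2j-1)(p-1))$ for odd $n=2k+1$ and at $(2j(p-1)-1,2j(p-1))$ for even $n=2k$, which after converting cohomological to homological degrees via $L_{np-i}=H^{i}$ is exactly the content of (1), (2), (2').

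The main obstacle is the base computation of $\Theta\Gamma^{p}$: verifying both that the Cartier-type resolution contributes $I^{(1)}$ cohomology in exactly the two positions $p-2$ and $p-1$ (and nowhere else), and that the resulting complex genuinely splits as a direct sum in $\DD^{b}(\P_{p,\k})$. Once this single fact is in hand, everything else is mechanical triangulated bookkeeping combined with Proposition~\ref{prop-chal}.
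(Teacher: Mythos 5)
Your treatment of the cases $n=1,2$ matches the paper: $\Theta S^p\simeq\Lambda^p$ and $\Theta^2 S^p\simeq\Gamma^p$, both concentrated in degree zero. For $n\geq 3$ you take a genuinely different route. The paper never decomposes $\Theta\Gamma^p$ into summands; it builds an explicit injective coresolution of $\Gamma^p$ by splicing the dual Koszul complex with the Koszul complex ($T^\sharp\xrightarrow{\partial}T\to S^p$, with $T$ the truncated Koszul complex), applies $\H(\Lambda^p,-)$ repeatedly to obtain alternating splices $T\xrightarrow{d}T^\sharp\xrightarrow{\partial}T\xrightarrow{d}\cdots$, and reads off the cohomology from the fact that $\partial$ is exact and $d$ encodes $S^p\to\Gamma^p$, whose kernel and cokernel are $I^{(1)}$. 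You instead posit $\Theta\Gamma^p\simeq\Lambda^p\oplus I^{(1)}[p-2]\oplus I^{(1)}[p-1]$ in $\DD^b(\P_{p,\k})$ and then iterate $\Theta$ summand by summand via Proposition~\ref{prop-chal}.

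That formality claim is a genuine gap, and the obstruction you cite is not the decisive one. The two $I^{(1)}$ cohomology groups of $\Theta\Gamma^p$ sit in \emph{adjacent} degrees $p-2$ and $p-1$, so the obstruction to splitting them from each other lies in $\Ext^2_{\P_{p,\k}}(I^{(1)},I^{(1)})$, not in any $\Ext^1$ against $\Lambda^p$. By Friedlander--Suslin, $\Ext^2_{\P_{p,\k}}(I^{(1)},I^{(1)})\cong\k\neq 0$, and the class carried by $\Theta\Gamma^p$ is precisely that of the Yoneda $2$-extension $I^{(1)}\hookrightarrow S^p\to\Gamma^p\twoheadrightarrow I^{(1)}$ attached to the norm map. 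Showing that this class vanishes is far from automatic and quite possibly false, in which case $\Theta\Gamma^p$ is not formal and your direct-sum induction does not apply. Even if you retreat from full formality to filtering by truncations, each application of $\Theta$ yields a spectral sequence whose potential differentials you would have to rule out before the bookkeeping can conclude. The paper's approach avoids this entirely by never separating $\Theta^n\Gamma^p$ into summands: it carries a single explicit complex through the induction. I suggest you either adopt that explicit-complex model, or address the $\Ext^2$ extension class (and the higher ones appearing at each stage of the induction) concretely.
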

\begin{proof} We already know that $\Theta S^p=\Lambda^p$ and $\Theta \Lambda^p =\Gamma^p$. So we have to compute the iterated Ringel duals of $\Gamma^p$. Recall the exact Koszul complex (see e.g. \cite[Section 4]{FS}):
$$\Lambda^p\to \Lambda^{p-1}\otimes S^1\to \Lambda^{p-2}\otimes S^2\to \cdots\to \Lambda^1\otimes S^{p-1}\to S^p\;.\quad (*)$$
Its differentials are defined by using first the comultiplication $\Lambda^i\to \Lambda^{i-1}\otimes \Lambda^1$ (tensored by the identity of $S^{p-i}$) and then the multiplication $\Lambda^1\otimes S^{p-i}\to S^{p-i+1}$ (tensored by the identity of $\Lambda^{i-1}$).
Taking duality, we obtain another exact complex, the dual Koszul complex
$$\Gamma^p\to \Gamma^{p-1}\otimes \Lambda^1 \to \cdots\to \Gamma^1\otimes \Lambda^{p-1}\to \Lambda^p.$$
Splicing these two complexes together, we get a coresolution of $\Gamma^p$:
$$\Gamma^{p-1}\otimes \Lambda^1 \to \cdots\to \Gamma^1\otimes \Lambda^{p-1}\xrightarrow[]{\partial} \Lambda^{p-1}\otimes S^1\to \cdots\to S^p\;.$$
This is actually an injective coresolution of $\Gamma^p$ ($S^p$ is injective, and all the other objects are direct summands of $\otimes^p$). Let us denote by $T$ the truncation of the Koszul complex $(*)$ obtained by removing $\Lambda^p$ and $S^p$. Then the injective coresolution of $\Gamma^p$ can be informally written as $ T^\sharp \xrightarrow[]{\partial} T \to S^p$.

The Ringel dual of $\Gamma^p$ is the complex $ \H(\Lambda^p,T^\sharp \xrightarrow[]{\partial} T \to S^p)$. We can compute it explicitly. Indeed, 
$\H(\Lambda^p,\Lambda^i\otimes S^{p-i})=\Gamma^i\otimes \Lambda^{p-i}$
and $\H(\Lambda^p,-)$ sends the comultiplication $\Lambda^i\to \Lambda^{i-1}\otimes \Lambda^1$ to $\Gamma^i\to \Gamma^{i-1}\otimes \Gamma^1$ and the multiplication $\Lambda^1\otimes S^{p-i}\to S^{p-i+1}$ to $\Gamma^1\otimes \Lambda^{p-i}\to \Lambda^{p-i+1}$. Thus we obtain:
$$\H(\Lambda^p,T)=T^\sharp.$$
For $i<p$, $S^i$ is canonically isomorphic to $\Gamma^i$. The comultiplication $S^i\to S^{i-1}\otimes S^1$ identifies through this isomorphism with $\Gamma^i\to \Gamma^{i-1}\otimes \Gamma^1$, and the multiplication identifies $S^{i-1}\otimes S^1\to S^i$ identifies with $\Gamma^{i-1}\otimes \Gamma^1\to \Gamma^i$. So we also have:
$$\H(\Lambda^p,T^\sharp)=T.$$
Finally, $\H(\Lambda^p,-)$ sends the map $\partial$ onto the composite
$$d:\Lambda^1\otimes S^{p-1}\to S^p\to \Gamma^p \to \Gamma^{p-1}\otimes \Lambda^1.$$
(to see this embed $\Gamma^1\otimes \Lambda^{p-1}$ and $\Lambda^{p-1}\otimes S^1$ in $\otimes^p$, lift the map $\partial$, apply $\H(\Lambda^p,-)$ to the lift and use left exactness of $\H(\Lambda^p,-)$).
So we conclude that the Ringel dual of $\Gamma^p$ is given by 
$$\Theta\Gamma^p\simeq \left(T\xrightarrow[]{d} T^{\sharp}\to \Lambda^p\right)\simeq \left(T\xrightarrow[]{d} T^{\sharp}\xrightarrow[]{\partial} T\to S^p\right)\;. $$
To compute $\Theta^2\Gamma^p)$ we apply $\H(\Lambda^p,-)$ to the complex of injectives from the right hand side. The functor $\H(\Lambda^p,-)$ sends $d$ to $\partial$, so that $$\Theta^2 \Gamma^p\simeq\left(T^\sharp\xrightarrow[]{\partial} T\xrightarrow[]{d} T^{\sharp}\to \Lambda^p\right)\simeq\left(T^\sharp\xrightarrow[]{\partial} T\xrightarrow[]{d} T^{\sharp}\to T\to S^p\right)\;. $$
By induction we prove that:
$$\Theta^n\Gamma^p\simeq \left\{\begin{array}{l}
T^\sharp\xrightarrow[]{\partial} (T\xrightarrow[]{d} T^{\sharp})^{n/2}\to \Lambda^p\quad \text{if $n$ is even,}\\
(T\xrightarrow[]{d} T^{\sharp})^{(n+1)/2}\to \Lambda^p \quad\text{if $n$ is odd.}
\end{array}\right.$$

Now we compute the homology of these complexes. If $n$ is odd, $H^0(\Theta^n\Gamma^p)= \Lambda^p$ since it is the kernel of the map $\Lambda^{p-1}\otimes S^1\to \Lambda^{p-2}\otimes S^2$ appearing in the Koszul complex $(*)$. If $n$ is even $H^0(\Theta^n\Gamma^p)= \Gamma^p$ since it is the kernel of the map $\Gamma^{p-1}\otimes \Lambda^1\to \Gamma^{p-2}\otimes \Lambda^2$ appearing in the dual Koszul complex. This proves assertion (1). To prove assertion (2) and (3) we observe that the homology of the complex of the form:
$$ \dots\xrightarrow[]{\partial}T\xrightarrow[]{d}T^{\sharp}\xrightarrow[]{\partial} T \xrightarrow[]{d}T^{\sharp}\xrightarrow[]{\partial}\dots $$
is zero everywhere (as splices of exact complexes) except for the homology groups located at the source and the target of the maps $d$.
Indeed the map $S^p\to \Gamma^p$ has kernel $I^{(1)}$ and cokernel $I^{(1)}$ so we obtain that the homology groups at the source or the target of these maps equal 
$I^{(1)}$.
\end{proof}

Now let us work over the integers. For all free $\Z$-modules $V$, the composite $S^d(V)\hookrightarrow V^{\otimes d}\twoheadrightarrow S^d(V)$ equals multiplication by the scalar $d!$. But $L\otimes^d(V;n)$ is quasi isomorphic to $V^{\otimes d}[-nd]$, so by functoriality $d!$ annihilates the torsion part of $LS^d(V;n)$. Taking $d=p$, we see that the $p$-primary part of $L_iS^p(V;n)$ consists only of $p$-torsion. Hence, using base change and the universal coefficient theorem, we can retrieve from proposition \ref{prop-calcSp} the $p$-primary part of $L_iS^p(V;n)$. We obtain the following result.
\begin{corollary}\label{cor-calc-LSp}
Let $p$ be an odd prime, and let $V$ be a finitely generated free $\Z$-module. For $n\ge 3$, the $p$-primary part of $L_{*} S^p(V;n)$ is zero, with the following exceptions.
\begin{itemize}
\item If $n$ is odd, and $1\le i\le (n-1)/2$, the $p$-primary part of $L_{np-i(2p-2)+p-1}S^p(V;n)$ equals $(V\otimes\mathbb{F}_p)^{(1)}$.
\item If $n$ is even and $1\le i\le n/2-1$, the $p$-primary part of 
$L_{np-i(2p-2)}S^p(V;n)$ equals $(V\otimes\mathbb{F}_p)^{(1)}$.
\end{itemize}
\end{corollary}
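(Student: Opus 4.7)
The plan is to combine the mod-$p$ computation of Proposition~\ref{prop-calcSp} with Bott's rational computation (the corollary following Proposition~\ref{prop-decalage}) by means of the universal coefficient theorem applied to the complex $LS^p(V;n)$. Setting $V_p := V \otimes \mathbb{F}_p$, the base change isomorphism
$$LS^p_{\mathbb{F}_p}(V_p;n) \simeq LS^p(V;n) \otimes_{\Z}^{\LL} \mathbb{F}_p$$
yields short exact sequences
$$0 \to L_i S^p(V;n) \otimes \mathbb{F}_p \to L_i S^p_{\mathbb{F}_p}(V_p;n) \to \mathrm{Tor}_1^{\Z}(L_{i-1}S^p(V;n), \mathbb{F}_p) \to 0,$$
so the corollary will follow from a clean dimension count.

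The first preparatory step is to verify that the $p$-primary part $T_i$ of $L_i S^p(V;n)$ is annihilated by $p$: Lemma~\ref{lm-tors} (with $d=p$) shows that $p!$ kills all torsion, and since $(p-1)!$ is invertible modulo $p$, the $p$-primary summand must already be killed by $p$. Hence each $T_i$ is an $\mathbb{F}_p$-vector space, $L_i S^p(V;n) \otimes \mathbb{F}_p \cong F_i \otimes \mathbb{F}_p \oplus T_i$ (where $F_i$ denotes the $\Z$-free part), and the $\mathrm{Tor}$ term of the UCT is just $T_{i-1}$. Taking $\mathbb{F}_p$-dimensions therefore gives
$$\dim L_i S^p_{\mathbb{F}_p}(V_p;n) = \mathrm{rk}(F_i) + \dim T_i + \dim T_{i-1}.$$

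The second preparatory step identifies $F_i$. Tensoring with $\mathbb{Q}$ and invoking Bott's theorem, $L_* S^p(V;n) \otimes \mathbb{Q}$ is concentrated in degree $np$ with value $\Lambda^p V \otimes \mathbb{Q}$ when $n$ is odd and $S^p V \otimes \mathbb{Q}$ when $n$ is even. Hence $F_i = 0$ for $i \ne np$, while $\mathrm{rk}(F_{np})$ equals $\dim_{\mathbb{F}_p} \Lambda^p V_p$ or $\dim_{\mathbb{F}_p} \Gamma^p V_p$ according to the parity of $n$.

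With these ingredients I would proceed by descending induction on $i$, starting from $i \gg 0$ where everything vanishes. At $i = np$ the free contribution already saturates the dimension provided by Proposition~\ref{prop-calcSp}(1), forcing $T_{np} = T_{np-1} = 0$. For $i < np$ one has $F_i = 0$ and the relation reduces to $\dim T_i = \dim L_i S^p_{\mathbb{F}_p}(V_p;n) - \dim T_{i-1}$. Proposition~\ref{prop-calcSp}(2) and (2') contribute nonzero mod-$p$ terms only in pairs of adjacent degrees $(j, j-1)$, both equal to $V_p^{(1)}$; starting from above with $T_{j+1} = 0$ delivered inductively from a gap of zeros, the recursion then produces $T_j = 0$ at the upper position and $T_{j-1} = V_p^{(1)}$ at the lower, matching the claimed pattern. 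The hard part is the bookkeeping at $i = np$ and $i = np-1$: one must check that the ranks from Bott's theorem match the $\mathbb{F}_p$-dimensions of Proposition~\ref{prop-calcSp}(1) exactly, so that the induction is primed correctly and no residual $p$-torsion slips in at these two degrees.
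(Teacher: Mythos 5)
Your argument is correct and is the same approach the paper intends: the paper's (very terse) indication before the corollary combines $p!$-annihilation of torsion, base change, and the universal coefficient theorem applied to Proposition~\ref{prop-calcSp}, and you have supplied the one ingredient the paper leaves implicit, namely the rational (Bott) computation that locates the free part in degree $np$, without which the dimension count would not close. The only minor slip is that you state the recursion as $\dim T_i = \dim L_iS^p_{\mathbb{F}_p}(V_p;n) - \dim T_{i-1}$, whereas a descending induction in fact deduces $\dim T_{i-1}$ from the known $\dim T_i$; this is just a rearrangement of the same identity, and your subsequent description (zero at the upper position of each nonzero pair, $V_p^{(1)}$ at the lower position, which matches the shifts $np-i(2p-2)+p-1$ for $n$ odd and $np-i(2p-2)$ for $n$ even) shows you are applying it in the correct direction.
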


In \cite[Thm 8.2]{BM}, Breen and Mikhailov compute the derived functors of the third Lie functor $\L^3$ which is the kernel of the map $S^2\otimes S^1\to S^3$ (so $\L^3$ is nothing but the Schur functor $S_{(2,1)}$).  Their result give the values of $L\L^3(V;n)$ over an arbitrary abelian group $V$. As an illustration of our techniques, we use corollary \ref{cor-calc-LSp} to recover their result when $V$ is free and finitely generated.

\begin{corollary}[Compare {\cite[Thm 8.2]{BM}}] Let $V$ be a free finitely generated $\Z$-module.
The derived functors $L_i\L^3(V;n)$ are trivial in all degrees except the following degrees.
\begin{itemize}
\item If $n$ is odd, 
$L_{3n}\L^3(V;n)$ is the kernel of the multiplication $\Lambda^2\otimes \Lambda^1\to \Lambda^3$ and for $1\le i\le (n-1)/2$,  $L_{3n-4i+1}\L^3(V;n)$ equals $(V\otimes\mathbb{F}_3)^{(1)}$.
\item If $n$ is even, $L_{3n}\L^3(V;n)=\L^3(V)$ and for $1\le i\le n/2$, $L_{3n-4i-1}\L^3(V;n)$ equals $(V\otimes\mathbb{F}_3)^{(1)}$.
\end{itemize}
\end{corollary}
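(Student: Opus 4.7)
The strategy is to deduce this corollary from Corollary~\ref{cor-calc-LSp} by applying $L(-;n)$ to the short exact sequence
$$0 \to \L^3 \to S^2\otimes S^1 \xrightarrow{\mathrm{mult}} S^3 \to 0$$
in $\P_{3,\Z}$, which presents $\L^3$ as $S_{(2,1)}$. Since $L(-;n)$ is a triangle functor by Theorem~\ref{thm-main}, this produces a long exact sequence
$$\cdots \to L_{i+1}S^3(V;n) \xrightarrow{\partial} L_i\L^3(V;n) \to L_i(S^2\otimes S^1)(V;n) \to L_iS^3(V;n) \to \cdots.$$
Because $S^1$ is additive, $L_*S^1(V;n)$ is $V$ concentrated in degree $n$ and $\Z$-free; hence Proposition~\ref{prop-sym} combined with a trivial K\"unneth argument yields $L_k(S^2\otimes S^1)(V;n)\simeq L_{k-n}S^2(V;n)\otimes V$. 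The groups $L_*S^2(V;n)$ are classical (Bousfield, Dold--Puppe) and can alternately be recovered from the Koszul-resolution argument of Proposition~\ref{prop-calcSp} specialized to $p=2$: the top group in degree $2n$ is $\Lambda^2(V)$ for odd $n$ and $\Gamma^2(V)$ for even $n$, while the remaining ones consist only of $2$-primary torsion copies of $(V\otimes\mathbb{F}_2)^{(1)}$ in prescribed degrees.

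The top-degree assertion follows from the tail of the long exact sequence. By Corollary~\ref{cor-calc-LSp} one has $L_{3n+1}S^3(V;n)=0$ (the case $n=1$ being covered by d\'ecalage), giving the short exact sequence
$$0 \to L_{3n}\L^3(V;n) \to L_{2n}S^2(V;n)\otimes V \xrightarrow{L_{3n}(\mathrm{mult})} L_{3n}S^3(V;n).$$
For odd $n$ the right-hand map identifies with the exterior multiplication $\Lambda^2(V)\otimes V \to \Lambda^3(V)$, whose kernel is precisely the functor named in the statement; for even $n$ it identifies with the divided-power multiplication $\Gamma^2(V)\otimes V \to \Gamma^3(V)$, whose kernel is $\L^3(V)$ via the standard integral presentation of the Lie functor, cf.~\cite[Thm~II.3.16]{ABW}.

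For the remaining degrees we argue prime by prime. A variant of Lemma~\ref{lm-tors} shows that the torsion in $L_*\L^3(V;n)$ is annihilated by $12=(3!)(2!)$, so only $p=2$ and $p=3$ can contribute. At $p=3$, Corollary~\ref{cor-calc-LSp} describes $L_*S^3(V;n)\otimes\mathbb{F}_3$ explicitly, while $L_*S^2(V;n)$ has no $3$-torsion (its only non-top derived groups are $2$-primary). Consequently the mod-$3$ reduction of the long exact sequence forces the boundary $\partial$ to be an isomorphism in the relevant degrees, and substituting $p=3$ into the index formula of Corollary~\ref{cor-calc-LSp} produces $(V\otimes\mathbb{F}_3)^{(1)}$ summands at degrees $3n-4i+1$ for odd $n$ and $3n-4i-1$ for even $n$, matching the statement. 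The main obstacle is to rule out a $2$-primary contribution coming from the $2$-torsion of $L_*S^2(V;n)\otimes V$. This will be settled by computing $L_*S^3(V;n)\otimes\mathbb{F}_2$ via the Koszul-style argument of Proposition~\ref{prop-calcSp} adapted to $p=2$, and then verifying that the induced map $L_*(S^2\otimes S^1)(V;n)\otimes\mathbb{F}_2 \to L_*S^3(V;n)\otimes\mathbb{F}_2$ is an isomorphism on all $2$-torsion outside the top degree, so that the long exact sequence forces $L_*\L^3(V;n)\otimes\mathbb{F}_2$ to vanish there.
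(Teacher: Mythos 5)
Your proposal follows the same long exact sequence as the paper, and the top-degree analysis is essentially the paper's, but you miss the key simplification that makes the lower degrees go through cleanly, and as a result you are left with an unfinished $p=2$ analysis that is genuinely substantial.

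The paper's decisive observation is that $L_*\L^3(V;n)$ has \emph{only} $3$-torsion away from the top degree. This follows from the retraction
$$\L^3\hookrightarrow S^2\otimes S^1\to \otimes^3 \xrightarrow{1-\tau_2}\otimes^3\to S^2\otimes S^1\twoheadrightarrow \L^3,$$
which equals multiplication by $3$ on $\L^3$; so any torsion in the derived functors is annihilated by $3$. Combined with the fact that $L_*(S^2\otimes S^1)(V;n)$ has only $2$-torsion away from the top degree (the retraction $S^2\otimes S^1\to\otimes^3\to S^2\otimes S^1$ is multiplication by $2$), the long exact sequence forces the boundary $L_jS^3\to L_{j-1}\L^3$ to be surjective with kernel precisely the $2$-primary part, and one reads off $L_{j-1}\L^3\simeq L_jS^3\otimes\mathbb{F}_3$ for $j<3n$. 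No work at $p=2$ is required.

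Your plan instead only extracts that $L_*\L^3$ is annihilated by $12$ (a generic variant of Lemma~\ref{lm-tors}), and then proposes to rule out a $2$-primary contribution by ``computing $L_*S^3(V;n)\otimes\mathbb{F}_2$ via the Koszul-style argument of Proposition~\ref{prop-calcSp} adapted to $p=2$'' and verifying an isomorphism on $2$-torsion. This is a genuine gap: Proposition~\ref{prop-calcSp} is stated and proved only for odd $p$, the $p=2$ adaptation is not routine (the Koszul complex used there is $2p$-periodic in a way that degenerates at $p=2$), and even granting the computation you would still need to identify the map $L_*(S^2\otimes S^1)\otimes\mathbb{F}_2\to L_*S^3\otimes\mathbb{F}_2$ explicitly. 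None of this is carried out. The paper's retraction argument sidesteps all of it in one line, so you should replace the $p=2$ step by the observation that $L_*\L^3$ has no $2$-torsion at all. A minor side remark: the vanishing $L_{3n+1}S^3(V;n)=0$ does not follow from Corollary~\ref{cor-calc-LSp} (which controls only the odd-primary part); it is the elementary fact that $L_qF(V;n)=H^{nd-q}(\Theta^nF)(V)$ vanishes for $q>nd$ since $\Theta^nF$ is concentrated in nonnegative cohomological degrees.
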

\begin{proof} $\L^3(V)$ is formed by the elements of $S^2(V)\otimes S^1(V)$ of the form $ab\otimes c - bc\otimes a$. Thus, the composite (where $\tau_2$ is the transposition $(2,3)$)
$$\L^3\hookrightarrow S^2\otimes S^1\to \otimes^3 \xrightarrow[]{1-\tau_{2}}\otimes^3\to S^2\otimes S^1$$ (where $\tau_2$ is the transposition $(2,3)$) surjects on $\L^3$, and actually equals multiplication by the scalar $3$. So derived functors of $\L^3$ have only $3$-torsion. 

The short exact sequence $\L^3\hookrightarrow S^2\otimes S^1\twoheadrightarrow S^3$ induces a long exact sequence 
$$\cdots\to L_i(S^2\otimes S^1)(V;n)\to L_i S^3(V;n)\to L_{i-1}\L^3(V;n)\to \cdots\;.$$ 
Since $S^1\otimes S^2$ has no $3$-torsion (the composite $S^2\otimes S^1\to \otimes^3\to S^2\otimes S^1$ is multiplication by $2$), this long exact sequence splits into exact sequences (we use theorem \ref{thm-main} to identify derived functors in degree $3n$):
$$0\to L_{3n}\L^3(V;n)\to H^0\Theta^n (S^2\otimes S^1) \to H^0\Theta^n S^3\to  L_{3n-1}\L^3(V;n)\to 0\;.  $$
$$0\to L_j S^3(V;n)\otimes\mathbb{F}_3\to L_{j-1}\L^3(V;n)\to 0\;,\; \text{ for $j<3n$.} $$
We identify the map $H^0\Theta^n (S^2\otimes S^1) \to H^0\Theta^n S^3$ by successive applications of the functor $\H(\Lambda^3,-)$ to the multiplication $S^2\otimes S^1\to S^3$. Applying it once, we obtain the multiplication $\Lambda^2\otimes\Lambda^1\to \Lambda^3$. Applying it once again we obtain the multiplication $\Gamma^2\otimes\Gamma^1\to \Gamma^3$, applying it once again we obtain again the multiplication $\Lambda^2\otimes\Lambda^1 \to\Lambda^3$. Thus, if $n$ is odd,
$$L_{3n}\L^3(V;n)= \ker\{\Lambda^2\otimes\Lambda^1\to \Lambda^3\}\;,\;\text{ and } L_{3n-1}\L^3(V;n)=0\;.$$
Now, if $n$ is even, the injective morphism of algebras $S^*\to \Gamma^*$ yields a commutative diagram:
$$\xymatrix{
\Gamma^2\otimes\Gamma^1\ar[rr]^-{\mathrm{mult}}&&\Gamma^3\\
S^2\otimes S^1\ar@{^{(}->}[u]^-{(1)}\ar[rr]^-{\mathrm{mult}}&&S^3\ar@{^{(}->}[u]
}.$$
An elementary computation shows the map $(1)$ induces an isomorphism from $\L^3$ onto the kernel of $\Gamma^2\otimes\Gamma^1\to \Gamma^3$. Thus, for $n$ even:
$$L_{3n}\L^3(V;n)= \L^3(V)\;,\;\text{ and } L_{3n-1}\L^3(V;n)= (V\otimes\mathbb{F}_3)^{(1)}\;.$$
The other short exact sequences give $L_i\L^3(V;n)$ in lower degrees. 
\end{proof}

\section{Strict polynomial functors with non free values}\label{sec-arb}

In the previous sections, we have dealt with the category $\P_\k$ of strict polynomial functors having values in finitely generated free modules over the PID $\k$. This framework is sufficient for the applications of section \ref{sec-applic}, and it slightly simplifies the proofs since we don't have to derive tensor products. However, the case of functors with non-free values may be interesting, so in this section we briefly explain how to deal with this apparently more general case.

Let $\k$ be a PID. Let us denote by $\widetilde{\P}_{d,\k}$ the category of $\k$-linear functors from $\Gamma^d\V_\k$ to the abelian category of finitely generated $\k$-modules, and by $\widetilde{\P}_\k$ the direct sum of the categories $\widetilde{\P}_{d,\k}$.
Thus the category $\P_\k$ is a full exact subcategory of $\widetilde{\P}_\k$. 
The category $\widetilde{\P}_\k$ has enough projectives. A projective generator is provided by the functors $\Gamma^{d,V}$, $d\ge 0, V\in\V_\k$. We observe that it is also the projective generator of $\P_\k$. The following lemma is a formal consequence of this observation (together with the existence of finite projective resolutions \cite{DonkinHDim, AB} in the bounded case).
\begin{lemma}\label{lm-equiv}
Let the symbol $*$ stand for $-$ or $b$.
The functor $\P_\k\to \widetilde{\P}_\k$ induces equivalences of monoidal triangulated categories:
$$\DD^*(\P_\k)\simeq \DD^*(\widetilde{\P}_{\k})\;.$$
\end{lemma}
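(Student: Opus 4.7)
The strategy is to identify both derived categories with the homotopy category of the \emph{same} category of projectives. The key observation is that the projective generators $\Gamma^{d,V}$ (for $V\in\V_\k$ and $d\ge 0$) of $\widetilde{\P}_\k$ all lie in the subcategory $\P_\k$, and any direct summand in $\widetilde{\P}_\k$ of a finite direct sum of such $\Gamma^{d,V}$'s has finitely generated projective values over the PID $\k$, hence remains an object of $\P_\k$. Consequently the full subcategories $\Proj(\P_\k)$ and $\Proj(\widetilde{\P}_\k)$ of projective objects coincide.

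First I would establish, in the bounded-above setting, the two equivalences
$$\DD^-(\widetilde{\P}_\k)\simeq \KK^-(\Proj(\widetilde{\P}_\k))\qquad\text{and}\qquad \DD^-(\P_\k)\simeq \KK^-(\Proj(\P_\k)).$$
The first is classical for abelian categories with enough projectives. The second is Keller's analogue for exact categories with enough projectives; it relies on the fact (implicit in section \ref{subsec-derived}) that every object of $\P_\k$ admits an admissible epimorphism from a projective, together with the claim that every quasi-isomorphism between bounded-above complexes of projectives in $\P_\k$ is a homotopy equivalence. Combined with the identification $\Proj(\P_\k)=\Proj(\widetilde{\P}_\k)$, this produces the equivalence $\DD^-(\P_\k)\simeq \DD^-(\widetilde{\P}_\k)$, realized by the inclusion functor.

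To descend to the bounded level I would invoke the finite homological dimension of $\widetilde{\P}_\k$ (Donkin and Akin–Buchsbaum): every object admits a \emph{finite} projective resolution, so a bounded-above complex of projectives has bounded cohomology if and only if it is quasi-isomorphic to a bounded complex of projectives. This identifies $\DD^b$ with $\KK^b(\Proj)$ on both sides and transfers the equivalence. For the monoidal and triangulated upgrade: the inclusion $\P_\k\hookrightarrow\widetilde{\P}_\k$ is exact between exact (resp.\ abelian) categories, hence induces a triangle functor on derived categories; and the tensor product of two projectives in $\P_\k$ remains projective in $\P_\k$ (cf.\ section \ref{subsec-SPF}), so the monoidal structures on both sides are computed by the honest tensor product of complexes of projectives, and the equivalence respects them.

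The principal obstacle I anticipate is the verification that an acyclic bounded-above complex of projectives in $\P_\k$ (acyclic in the sense of lemma \ref{lm-caracqis}(iii)) is null-homotopic. This will be handled by descending induction from the top nonzero degree: for the topmost conflation $Z^N\hookrightarrow P^N\twoheadrightarrow 0$ the cycle $Z^N$ equals $P^N$ hence is projective; then each subsequent conflation $Z^{n}\hookrightarrow P^{n}\twoheadrightarrow Z^{n+1}$ has projective target, hence splits, which makes $Z^{n}$ a direct summand of $P^{n}$ and in turn projective. Splitting of all constituent conflations yields a contracting homotopy, completing the argument.
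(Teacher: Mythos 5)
Your proposal is correct and follows essentially the same approach as the paper: the paper's proof is precisely the observation that $\Proj(\P_\k)=\Proj(\widetilde{\P}_\k)$, combined with the standard identification $\DD^*\simeq\KK^*(\Proj)$ on both sides (using finite homological dimension in the bounded case), presented as a commutative square of equivalences. Your write-up simply unpacks the details the paper leaves implicit, such as the contractibility of acyclic bounded-above complexes of projectives.
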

\begin{proof}
We have a commutative diagram of monoidal triangulated functors, where the two vertical arrows are equivalences of categories, with triangulated monoidal inverses:
$$\xymatrix{
\KK^*(\mathrm{Proj}(\P_\k))\ar[d]^-{\simeq}\ar[r]^-{=}& \KK^*(\mathrm{Proj}(\widetilde{\P}_\k))\ar[d]^-{\simeq}\\
\DD^*(\P_\k)\ar[r]&\DD^*(\widetilde{\P}_\k)
}\;.$$
\end{proof}  

The definition of the functors $L(-,n)$, $\Sigma$ and $\Theta$ can be generalized without change when working in $\widetilde{\P}_\k$. Now lemma \ref{lm-equiv} implies that our main theorem \ref{thm-main} stays valid with $\P_\k$ replaced by $\widetilde{\P}_\k$.

\section{Appendix: representations of $\k$-linear categories}\label{app}

We fix a commutative ring $\k$, and we denote by $\V_\k$ the category of finitely generated projective $\k$-modules and $\k$-linear maps.
We consider a category $\C$ enriched over $\V_\k$ (That is, $\hom$s are finitely generated projective $\k$-modules, and composition is bilinear).

In this appendix, we describe the relations between the following two categories.
\begin{itemize}
\item[(i)] The category $\C\text{-mod}$ of $\k$-linear representations of $\C$, that is the category of $\k$-linear functors $\C\to\V_\k$.
\item[(ii)] If $P\in\C$, then $\End_\C(P)$ is an algebra (for the composition), and we denote by $\End_\C(P)\text{-mod}$ the category of $\End_\C(P)$-modules, which are finitely generated and projective as $\k$-modules. 
\end{itemize}

Direct sums, products, kernels and cokernels in $\C\text{-mod}$ are computed objectwise in the target category. So $\C\text{-mod}$ inherits the structure of $\V_\k$. So it is an exact category, that is an additive category equipped with a collection of admissible short exact sequences \cite{Keller,Buehler}. To be more specific, the admissible short exact sequences are the pairs of morphisms $F\hookrightarrow G\twoheadrightarrow H$ which become short exact sequences of $\k$-modules after evaluation on the objects of $\C$. (If $\k$ is a field, $\C\text{-mod}$ is even an abelian category).

Similarly, the category $\End_\C(P)\text{-mod}$ is an exact category, the admissible short exact sequences are the pairs of morphisms $F\hookrightarrow G\twoheadrightarrow H$ which become short exact sequences of $\k$-modules after forgetting the action of $\End_\C(P)$.

The two categories are related via evaluation on $P$. If $F\in\C\text{-mod}$, the functoriality of $F$ makes the $\k$-module $F(P)$ into a $\End_\C(P)$-module. Thus we have an evaluation functor:
$$\begin{array}{ccc}
\C\text{-mod}&\to &\End_\C(P)\text{-mod}\\
F&\mapsto & F(P)
\end{array}. $$
This functor is additive and exact but in general it does not behave well with projectives. For example, $\End_\C(P)$ is a projective generator of $\End_\C(P)\text{-mod}$, whereas the projective functor $ \C^P:X\mapsto\hom_\C(P;X)$ need not be a projective generator of $\C\text{-mod}$. The following theorem gives a condition on $\C$ so that the evaluation functor is an equivalence of categories (compare \cite[lemma 3.4]{Kuhn}, where a similar statement is given in the case when $\k$ is a field and $\C$ is a category constructed from a functor with products, which by \cite[example 3.5]{Kuhn} encompasses the case of Schur algebras).

\begin{proposition}\label{prop-app}
Let $\C$ be a category enriched over $\V_\k$. Assume that there exists an object $P\in\C$ such that for all $X,Y\in\C$, the composition induces a surjective map
$$\hom_\C(X,P)\otimes \hom_\C(P,Y)\twoheadrightarrow \hom_\C(X,Y)\;.$$
Then the following holds.
\begin{enumerate}
\item[(i)] For all $F\in\C\text{-mod}$ and all $Y\in\C$ the canonical map $F(P)\otimes \hom_\C(P,Y)\to F(Y)$ is surjective.
\item[(ii)] The functor $\C^P:X\mapsto \hom_\C(P;X)$ is a projective generator of $\C\text{-mod}$.
\item[(iii)] Evaluation on $P$ induces an equivalence of categories $\C\text{-mod}\simeq \End_\C(P)\text{-mod}$.
\end{enumerate}
\end{proposition}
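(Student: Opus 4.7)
The plan is to establish the three assertions in order, since each feeds into the next. For (i), I would apply the surjectivity hypothesis with $X=Y$: this writes $\mathrm{id}_Y=\sum_i f_i\circ g_i$ with $g_i\in\hom_\C(Y,P)$ and $f_i\in\hom_\C(P,Y)$. Then $\k$-linearity and functoriality of $F$ yield $y=F(\mathrm{id}_Y)(y)=\sum_i F(f_i)\bigl(F(g_i)(y)\bigr)$ for every $y\in F(Y)$, exhibiting $y$ in the image of the canonical map.

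For (ii), projectivity of $\C^P$ is immediate from the Yoneda isomorphism $\hom_{\C\text{-mod}}(\C^P,-)\simeq\mathrm{ev}_P$, the evaluation functor being exact because admissible short exact sequences in $\C\text{-mod}$ are defined objectwise. To see that $\C^P$ generates, I would pick $\k$-module generators $x_1,\dots,x_n$ of $F(P)$ (possible since $F(P)\in\V_\k$ is finitely generated) and assemble them via Yoneda into a morphism $(\C^P)^{\oplus n}\to F$; its objectwise surjectivity then follows immediately from (i).

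The heart of the argument is (iii), and here the plan is to construct an explicit quasi-inverse
\[\Psi:\End_\C(P)\text{-mod}\longrightarrow\C\text{-mod},\qquad \Psi(M)(X):=\hom_\C(P,X)\otimes_{\End_\C(P)}M,\]
with $\hom_\C(P,X)$ regarded as a right $\End_\C(P)$-module via precomposition. The main obstacle is to verify that $\Psi(M)$ actually takes values in $\V_\k$, since over a general commutative ring $\V_\k$ is not stable under cokernels. I would overcome this by reapplying the surjectivity hypothesis with $X=Y$ to produce morphisms $a_i:P\to X$ and $b_i:X\to P$ ($1\le i\le n$) with $\sum_i a_i\circ b_i=\mathrm{id}_X$; the assignments $f\mapsto(b_i\circ f)_i$ and $(\phi_i)_i\mapsto\sum_i a_i\circ\phi_i$ then exhibit $\hom_\C(P,X)$ as a direct summand of $\End_\C(P)^{\oplus n}$ in right $\End_\C(P)$-modules, whence $\Psi(M)(X)$ is a direct summand of $M^{\oplus n}$ and therefore lies in $\V_\k$. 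Once $\Psi$ is well-defined, the identification $\mathrm{ev}_P\circ\Psi\simeq\mathrm{id}$ is tautological, while the counit $(\Psi\circ\mathrm{ev}_P)(F)(X)\to F(X)$, $f\otimes x\mapsto F(f)(x)$, is tautologically an isomorphism at $X=P$ and then extends to every $X$ by the same retract argument applied to the $\k$-linear (hence additive) functor $F$.
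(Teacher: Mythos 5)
Your proposal is correct. Parts (i) and (ii) coincide with the paper's argument up to cosmetic differences (you replace the paper's canonical epimorphism $F(P)\otimes\C^P\twoheadrightarrow F$ by the equivalent one from a free module on generators of $F(P)$; in either case one should note that the kernel is objectwise a direct summand of a finitely generated projective, hence lies in $\V_\k$, which makes the epimorphism admissible — you leave this implicit, but it is not a gap).

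Part (iii) is where you depart genuinely from the paper. The paper proves the equivalence indirectly: it first establishes full faithfulness by checking it on the projective generator $\C^P$, extending by additivity to $V\otimes\C^P$, and then to arbitrary $G$ via a projective presentation and left-exactness of $\hom$; it then establishes essential surjectivity by transporting a presentation $V_2\otimes\End_\C(P)\to V_1\otimes\End_\C(P)\twoheadrightarrow M$ along the already-proved full faithfulness and taking cokernels. You instead construct an explicit Morita-style quasi-inverse $\Psi(M)=\hom_\C(P,-)\otimes_{\End_\C(P)}M$, and the only nontrivial point — that $\Psi(M)$ lands in $\V_\k$ — you settle by exhibiting $\hom_\C(P,X)$ as a retract of a free right $\End_\C(P)$-module, using the same decomposition $\mathrm{id}_X=\sum_i a_i\circ b_i$ that underlies the paper's part (i). The counit is then shown to be an isomorphism by the same retract trick transported through naturality. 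Both routes pivot on the same combinatorial input, but yours packages it into a self-contained Morita argument that avoids the separate fully-faithful/essentially-surjective bookkeeping; the paper's route is arguably lighter on tensor-product machinery and stays closer to the exact-category formalism used throughout.
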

\begin{proof}
Let us prove (i). The canonical map is the map $x\otimes f\mapsto F(f)(x)$. Since $\hom_\C(Y,P)\otimes \hom_\C(P,Y)\twoheadrightarrow \hom_\C(Y,Y)$ is surjective, there exists a finite family of maps $\alpha_i\in \hom_\C(Y,P)$ and $\beta_i\in \hom_\C(P,Y)$ such that $\sum_i \beta_i\circ \alpha_i=\Id_Y$. For all $y\in F(Y)$ the element $\sum_i F(\alpha_i)(x)\otimes \beta_i\in F(P)\otimes \hom_\C(P,Y)$ is sent onto $y$ by the canonical map.

Let us prove (ii). The Yoneda isomorphism $\hom_{\C\text{-mod}}(\C^P,F)\simeq F(P)$ ensures that $\C^P$ is projective. Moreover (i) yields an epimorphism $F(P)\otimes \C^P\twoheadrightarrow F$. This epimorphism is admissible. Indeed, for all $X\in\C$, the kernel $K(X)$ of the map $F(P)\otimes \C^P(X)\twoheadrightarrow F(X)$ has values in $\V_\k$ (because the sequence of $\k$-modules $K(X)\hookrightarrow F(P)\otimes \C^P(X)\twoheadrightarrow F(X)$ is exact and the two other objects of the sequence are finitely generated and projective $\k$-modules). So the canonical map has a kernel in $\C\text{-mod}$.

To prove (iii), we have to show that the evaluation map is fully faithful and essentially surjective. The evaluation map fits into a commutative triangle (where the horizontal arrow is the Yoneda isomorphism and the diagonal arrow is the evaluation on the unit of $\End_\C(P)$):
$$\xymatrix{
\hom_{\C\text{-mod}}(\C^P,F)\ar[d]\ar[r]^-{\simeq}& F(P)\\
\hom_{\End_\C(P)\text{-mod}}(\End_\C(P),F(P))\ar[ru]_-{\simeq}
}.$$
Thus it induces an isomorphism. Now, by additivity of $\hom$s, we can extend this result to the functors of the form $V\otimes\C^P$, with $V\in\V_\k$. Finally, by (ii), any $G\in\C\text{-mod}$ has a presentation by functors of the form $V\otimes\C^P$. So by left exactness of $\hom_{\C\text{-mod}}(-,F)$ and $\hom_{\End_\C(P)\text{-mod}}(-,F(P))$, we obtain that evaluation on $P$ is fully faithful. 

It remains to prove that evaluation on $P$ is essentially surjective. Let $M$ be an $\End_\C(P)$, we may find a presentation of $M$ of the form: $V_2\otimes \End_\C(P)\xrightarrow[]{\psi} V_1\otimes \End_\C(P)\twoheadrightarrow M$ with $V_1,V_2\in\V_\k$. Since evaluation on $P$ is fully faithful, there exists a unique morphism $\phi:V_2\otimes\C^P\to V_1\otimes\C^P$ which coincides with $\psi$ after evaluation on $P$. We define a functor $F_M:\C\to\k\text{-mod}$ by  $F_M(X)=\mathrm{coker}\phi_X$ (the cokernel is taken in the category of $\k$-modules). Then $F_M(P)\simeq M$ is finitely generated and projective. Moreover for all $Y\in\C$, $F_M(Y)$ is a direct summand of $F_M(P)\otimes\hom_C(P,Y)$ (indeed, the canonical map $F_M(P)\otimes\hom_C(P,Y)\to F_M(X)$ is a retract of the map $x\mapsto \sum_iF_M(\alpha_i)(x)\otimes\beta_i$). So $F_M$ is actually a functor with finitely generated projective values. To sum up, we have found $F_M\in\C\text{-mod}$ whose evaluation on $P$ is isomorphic to $M$. That is, evaluation on $P$ is essentially surjective.
\end{proof}

The category $\Gamma^d\V_\k$ satisfies the hypotheses of proposition \ref{prop-app} with $P=\k^n$, $n\ge d$, see e.g. \cite[lemma 2.3]{TouzeClassical}. Hence we get a direct proof of the equivalence of categories $\Gamma^d\V_\k\text{-mod}\simeq S(n,d)\text{-mod}$, without appealing to the fact that $\Gamma^{d}\V_\k\text{-mod}$ is isomorphic to the category $\P_{d,\k}$ as defined by Friedlander and Suslin \cite{FS}.

\section*{Acknowledgements}
We thank Larry Breen, Nick Kuhn and Wilberd van der Kallen for their comments on a first version of this article.

\end{document}